\documentclass[12pt]{amsart}

\usepackage{amsmath}
\usepackage{amssymb}
\usepackage{amscd}
\usepackage{graphicx}
\usepackage{calrsfs}
\usepackage{enumitem}
\usepackage{xcolor}

\newtheorem*{TA}{\sc theorem A}
\newtheorem*{TB}{\sc theorem B}
\newtheorem*{TC}{\sc theorem C}

\newtheorem{thm}{\sc theorem}[section]
\newtheorem{pro}[thm]{\sc proposition}
\newtheorem{lem}[thm]{\sc lemma}
\newtheorem{cor}[thm]{\sc corollary}

\theoremstyle{definition}

\newtheorem{dfn}[thm]{\sc definition}
\newtheorem{ntt}[thm]{\sc notation}

\newtheorem{qst}[thm]{\sc question}

\theoremstyle{remark}

\newtheorem{rmk}[thm]{\sc remark}


\newcommand{\ind}{\mbox{$\mbox{\rm ind}$}}

\def\im{{\rm Im}}

\def\crit{{\rm Crit}}

\def\nb{{\mathcal Op}}

\def\S{{\bf S}}
\def\D{{\bf D}}
\def\I{{\bf I}}

\def\CC{{\mathcal C}}

\def\FF{{\mathcal F}}
\def\F{{\mathcal F}}

\def\HH{{\mathcal H}}

\def\LL{{\mathcal L}}

\def\OO{{\mathcal O}}

\def\ZZ{{\mathcal Z}}

\def\id{{\rm id}}
\def\mun{{^{-1}}}

\newcommand{\EL}{{\mathcal L}{}}

\newcommand{\R}{\mathbb R}

\newcommand{\W}{{\mathcal W}{}}
\newcommand{\Z}{\mathbb Z}

\newcommand{\rp}{respectively }
\renewcommand{\im}{{\rm Im}}
\newcommand{\om}{\omega}

\begin{document}

\title[Conformal Symplectic, Foliated and Contact Structures]
{Conformal Symplectic structures, Foliations and Contact Structures}

\author{M\'elanie Bertelson, Ga\"el Meigniez}

\date{\today}

\subjclass[2010]{
 57R30, 57R17, 53D05, 53D10}
\keywords{conformal symplectic, symplectic cobordism, foliation, contact, h-principle}

\begin{abstract} This paper presents two existence h-principles, the first for conformal symplectic structures on closed manifolds, and the second for leafwise conformal symplectic structures on foliated manifolds with non empty boundary. The latter h-principle allows to linearly deform certain codimension-$1$ foliations to contact structures. These results are essentially applications of the Borman-Eliashberg-Murphy h-principle for overtwisted contact structures and of the Eliashberg-Murphy symplectization of cobordisms, together with tools pertaining to foliated Morse theory, which are elaborated here.
\end{abstract}
\maketitle

\section{introduction}

In high dimensions, symplectic and contact topology require modern methods different from those effective in dimension three. In the present paper,
we essentially explore some consequences of the Eliashberg-Murphy symplectization of cobordisms \cite{eliashberg_murphy_20} together with
 the Borman-Eliashberg-Murphy h-prin\-ciple for overtwisted contact structures \cite{borman_eliashberg_murphy_15}.
  Also, one needs some tools falling to the
Morse theory of cod\-im\-ension-one foliations, which we elaborate.

\subsection{Existence of conformal symplectic structures} 

 On a manifold $M$,
a \emph{conformal symplectic structure} is
 a conformal class of nondegenerate $2$-forms,
 admitting a local symplectic representant in a neighborhood of every point.
This generalization of symplectic geometry is classical
 (see for example \cite{vaisman_85}, \cite{banyaga_02} and \cite{chantraine_murphy_19}). To such a structure,
one associates its \emph{Lee class:} a real cohomology
class of degree $1$ on $M$, which is
the obstruction to finding a global symplectic representant.

\medskip
Theorem $A$ in Section \ref{conformal_sec} is an existence h-principle for conformal symplectic structures on closed manifolds whose Lee class is any \underline{nonzero} de Rham cohomology class of degree one. The only case excluded is the one that would yield a genuine symplectic structure. We thus generalize a result obtained by Eliashberg-Murphy for Lee classes of rank one over $\Z$ (Theorem~1.8 in \cite{eliashberg_murphy_20}).

\subsection{Making foliated cobordisms conformally symplectic}

The data are a cobordism $(W, \partial_-W, \partial_+W)$ endowed with a codimension-$1$ coorientable taut foliation $\F$, whose leaves all meet both $\partial_-W$ and $\partial_+W$ transversely, and with a leafwise closed $1$-form $\eta$. Theorem $B$ in Section \ref{cobordism_sec} is an existence h-principle for leafwise conformal symplectic structures whose Lee class in every leaf is the cohomology class of $\eta$;
the boundary component
 $\partial_-W$ (\rp $\partial_+W$) being leafwise convex (resp. concave) in a certain sense.
 In particular, the leaves of $\F\vert_{\partial_-W}$  (\rp $\F\vert_{\partial_+W}$) are
 positive (resp. negative) contact submanifolds of $W$.

\medskip
Two tools belonging to the Morse theory of codimension-$1$ foliations are essential here. A real function $f$ on $W$ is called \emph{leafwise Morse} if its restriction to every leaf of $\F$ is a Morse function in the leaf. The tools in question are the construction of \emph{ordered} leafwise Morse functions, and a cancellation method for leafwise local extrema, analogous to the Cerf cancellation of local extrema in $1$-parametric families of functions. The first tool also reproves the existence of faithful submanifolds in taut codimension-$1$
foliations \cite{meigniez_16}.
 Also, these tools show that for any taut codimension-$1$ foliation on a compact manifold with boundary, if all the leaves meet the boundary transversely and non-trivially, then this foliation is uniformly open in the sense of \cite{bertelson_02};
hence, the h-principle for open leafwise invariant differential relations is verified for such foliations.

\subsection{Deforming foliations to contact structures}

Given a cod\-im\-ens\-ion-$1$ foliation $\F$ on a manifold $M$, defined by a non-vanishing $1$-form $\alpha$, a \emph{linear contactizing deformation} is a $1$-parameter family $\alpha_t = \alpha + t\lambda$ of $1$-forms such that $\alpha_t$ is contact for all sufficiently small positive $t$. These deformations are completely understood in dimension $3$
(see \cite{eliashberg_thurston_98}), but not much is known in higher dimensions.
 
\medskip
A linear contactizing deformation is provided by any \emph{exact}
 leafwise conformal symplectic structure whose Lee class is precisely the linear holonomy
  of the foliation. Here, ``exact'' refers to the Lichnerowicz differential in the leaves.
  Indeed, the deformation $\alpha + t \lambda$ is contact for all sufficiently small positive $t$'s if $d\lambda-\eta\wedge\lambda$ is leafwise nondegenerate. Moreover, such leafwise conformal structure exist (Theorem $C$ in Section \ref{deformation_sec}) for a large class of  foliations on cobordisms, which we call \emph{holonomous}. Although the leafwise \emph{convex} boundary may be empty, the leafwise
concave one cannot. Unfortunately, it seems that
 our construction is not able to produce contactizing deformations on closed manifolds.\\

{\bf Aknowledgments} This work was carried out partly while the second author was the host of the Mathematics Research Center in Stanford, and partly while both authors were hosts of the \emph{Forschunginstitut f\"ur Mathematik} in ETH Z\"urich. We thank both institutions. The first author is supported by the \emph{Excellence of Sciences} project \emph{Symplectic Techniques in Differential Geometry} jointly funded by the F.R.S.-FNRS and the FWO. It is a pleasure to thank Yasha Eliashberg for his observations and support. \\
\begin{ntt}
One denotes herefater by $\I$ the compact interval $[0,1]$, by $\D^d\subset\R^d$ the unit compact ball, and by $\S^{d-1}$ the sphere $\partial\D^d$. Every foliation $\F$ on a manifold $M$ with boundary is assumed to be transverse to the boundary, inducing therefore a foliation $\partial\F = \F\vert_{\partial M}$ on $\partial M$. The notation $\nb_X(Y)$, or $\nb(Y)$
if $X$ is understood,
holds for ``some open neighborhood of $Y$ in $X$''.
\end{ntt}

\section{Elements of conformal symplectic geometry}\label{elements_sec}

Let $\eta$ be a closed $1$-form on a manifold $M$ of dimension $2n\ge 2$. The \emph{Lichnerowicz} (also known as \emph{Novikov}) \emph{differential with respect to $\eta$}, denoted by $d_\eta$, is the ordinary
 Cartan differential somehow twisted by $\eta$; precisely,  
for every differential form $\theta\in\Omega^*(M)$:
$$
d_\eta\theta=d\theta-\eta\wedge\theta
$$
It is immediately verified that $d_{\eta}^2 \equiv 0$. The cohomology of the differential operator $d_\eta$ is called the \emph{Novikov cohomology of $M$ with respect to $\eta$} and denoted 
by $H^*_{\eta}(M)$.  

\begin{rmk}\label{cc_rmk}
The Novikov cohomology
 only depends, up to isomorphism, on the de Rham cohomology class of $\eta$.
 Precisely, when $\eta'-\eta=dF$ for some smooth
 function $F$ on $M$,  the differential complexes $(\Omega^*(M), d_{\eta'})$ and $(\Omega^*(M), d_\eta)$
  are isomorphic through a conformal rescaling of the differential forms~:
$$
d_{\eta'}\theta=e^F d_{\eta}(e^{-F}\theta).
$$
(Beware that the resulting
 isomorphism between $H^*_{\eta}(M)$ and $H^*_{\eta'}(M)$ is not canonical,
  depending on the choice of $F$.)
\end{rmk}

We have not found in the litterature the following generalization, which we shall need
further down, of the Poincaré Lemma.

\begin{lem}\label{poincare_lem} Let $M'$ be a manifold, $h:M'\times\I\to M$
 be
a smooth map. Define
$$F:M'\times\I\to\R:(x,t)\mapsto\int_0^t\eta(\frac{\partial h}{\partial t}(x,\tau))d\tau$$
$$\HH:\Omega^*(M)\to\Omega^{*-1}(M'):\theta\mapsto
\int_0^1e^{-F}\iota_{\partial/\partial t}(h^*(\theta)) dt$$
Consider on $M'$ the function $F_1:x\mapsto F(x,1)$, and for every $t\in\I$ the map
$h_t:x\mapsto h(x,t)$, and the $1$-form
 $\eta'_0=h_0^*(\eta)$.
 
  Then:
 
i) $e^{-F_1}h_1^*(\theta)-h_0^*(\theta)=d_{\eta'_0}\HH(\theta)+\HH(d_\eta\theta)$;

ii) The morphisms $\theta\mapsto h_0^*(\theta)$ and $\theta\mapsto e^{-F_1}h_1^*(\theta)$
induce the same morphism in Novikov cohomology $$H^*_\eta(M)\to H^*_{\eta'_0}(M')$$
\end{lem}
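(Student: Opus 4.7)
\smallskip

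My plan is to work on the product manifold $M'\times\I$ and express both sides of (i) as fiber integrals over $\I$. Write $\omega:=h^*\theta$ and split it along $\I$ as $\omega=\omega_0+dt\wedge\omega_1$, where $\omega_0,\omega_1$ are forms on $M'\times\I$ without $dt$-component. Similarly decompose $h^*\eta=\eta_s+f\,dt$, where by construction $f(x,t)=\eta(\partial h/\partial t)(x,t)$, so $\partial F/\partial t=f$ and $\eta_s(\cdot,0)=\eta'_0$. With these notations, $\HH(\theta)$ becomes the fiber integral $p_*\bigl(e^{-F}\omega\bigr)$, where $p:M'\times\I\to M'$ is the projection.

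The key identity I will need is
$$
h^*\eta-dF=\eta'_0\qquad\text{on }M'\times\I,
$$
i.e. $\eta_s(x,t)=\eta'_0(x)+d_{M'}F(x,t)$. This follows from the closedness of $\eta$: expanding $d(\eta_s+f\,dt)=0$ and separating the components with and without $dt$ yields $d_{M'}\eta_s=0$ and $\partial_t\eta_s=d_{M'}f$, whence integration in $t$ from $0$ to $t$ gives the identity. This is the only place where $\eta$ being closed is used, and it is the heart of the argument.

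Now I apply the standard fiber-integration (Stokes) formula $\iota_1^*\sigma-\iota_0^*\sigma=d_{M'}p_*\sigma+p_*(d\sigma)$ to $\sigma=e^{-F}\omega$. The left hand side is immediately $e^{-F_1}h_1^*\theta-h_0^*\theta$. For the right hand side, I compute
$$
d(e^{-F}\omega)=e^{-F}\bigl(d\omega-dF\wedge\omega\bigr)=e^{-F}\bigl(h^*(d_\eta\theta)+(h^*\eta-dF)\wedge\omega\bigr)=e^{-F}\bigl(h^*(d_\eta\theta)+\eta'_0\wedge\omega\bigr),
$$
using the key identity in the last step. Since $\eta'_0$ is pulled back from $M'$, the fiber integral pulls it out as a wedge, with a sign from commuting $\eta'_0$ past $dt$: $p_*\bigl(e^{-F}\eta'_0\wedge\omega\bigr)=-\eta'_0\wedge\HH(\theta)$. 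Combining, the right hand side becomes $d_{M'}\HH(\theta)-\eta'_0\wedge\HH(\theta)+\HH(d_\eta\theta)=d_{\eta'_0}\HH(\theta)+\HH(d_\eta\theta)$, which is exactly (i).

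Statement (ii) is then a straightforward consequence of (i). If $d_\eta\theta=0$, then applying $h_0^*$ shows $d_{\eta'_0}h_0^*\theta=0$, and using Remark~\ref{cc_rmk} for the primitive $F_1$ of $h_1^*\eta-\eta'_0$ gives $d_{\eta'_0}(e^{-F_1}h_1^*\theta)=e^{-F_1}d_{h_1^*\eta}(h_1^*\theta)=0$. Thus both terms on the left of (i) are $d_{\eta'_0}$-cocycles, and they differ by the $d_{\eta'_0}$-coboundary $d_{\eta'_0}\HH(\theta)$, so they represent the same class in $H^*_{\eta'_0}(M')$. I expect no significant obstacle beyond bookkeeping of signs in the wedge-and-integrate step; the conceptual content lives entirely in the closedness-based identity $h^*\eta=dF+\eta'_0$.
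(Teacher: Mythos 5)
Your proof is correct and essentially the same as the paper's: the fiber-integration (homotopy) formula you invoke is precisely the integrated form of the Cartan formula $\LL_{\partial/\partial t}=d\iota_{\partial/\partial t}+\iota_{\partial/\partial t}d$ applied to $e^{-F}h^*\theta$ on $M'\times\I$, which is what the paper uses. If anything, your explicit isolation and proof of the key identity $h^*\eta-dF=\eta'_0$ from the closedness of $\eta$ is a welcome clarification of a point the paper leaves implicit (it only records $\iota_{\partial/\partial t}(h^*\eta-dF)=0$) when passing from its identity on $M'\times\I$ to the slice-integrated statement (i) with the twist $d_{\eta'_0}$.
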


\begin{proof}[Proof of Lemma \ref{poincare_lem}] Let $\theta\in\Omega^*(M)$.
 On $M'\times\I$, consider the forms $\eta'=h^*(\eta)$ and
  $\theta'=e^{-F}h^*(\theta)$. Writing for short $\iota_t$ instead of
   $\iota_{\partial/\partial t}$,
  develope the Cartan formula:
$$\LL_{\partial/\partial t}(\theta')=d\iota_{t}(\theta')+\iota_{t}(d\theta')$$
$$
=e^{-F}\big(
d\iota_t(h^*\theta)-dF\wedge\iota_t(h^*\theta)+\iota_t(d(h^*\theta))-\iota_t(dF\wedge
h^*\theta)
\big)
$$
Since moreover
$$\iota_t(dF)=\frac{\partial F}{\partial t}=\eta(\frac{\partial h}{\partial t})=\iota_t(\eta')$$
defining $
H(\theta)=e^{-F}\iota_t(h^*\theta)
$, one gets
$$\LL_{\partial/\partial t}(\theta')=d_{\eta'}H(\theta)+H(d_\eta\theta)$$

 Of course, (i) follows by restriction to the slices $M'\times t$ and integration
 with respect to $t$; then, (ii) follows immediately from (i).
\end{proof}

If $M$ has a boundary, the \emph{relative} Novikov cohomology
 $H_\eta^*(M,\partial M)$ with respect to $\eta$ is defined, as usual, as the cohomology of 
  $$\Omega^*(M) \times \Omega^{*-1}(\partial M)$$ under the differential operator
\begin{equation}\label{relative_eqn}
D_\eta:(\theta,\theta')\mapsto(d_\eta\theta,\theta\vert_{\partial M}-d_\eta\theta')
\end{equation}

A number of standard notions in symplectic geometry admit obvious generalizations to conformal symplectic geometry: one simply replaces the Cartan operator $d$ by its twisted version $d_\eta$.

\begin{dfn}\label{twisted_dfn} A $2$-form $\omega$ on $M$ is \emph{$\eta$-symplectic} if $\omega$ is $d_\eta$-closed and nondegenerate.
\end{dfn}

Every $\eta$-symplectic form $\omega$
  defines a conformal symplectic structure on $M$.
  Indeed, after Remark \ref{cc_rmk}, for every open subset $U$ of $M$
 on which $\eta$ admits a primitive $F$,
  the $2$-form $e^{-F}\om$ is closed, and
   hence genuinely symplectic on $U$. 
   
   Conversely, for every nondegenerate $2$-form $\om$
   representing a conformal symplectic structure,
   one has a closed $1$-form $\eta$ on $M$
   whose local primitives $F$ satisfy $d(e^{-F}\om)=0$.
    In other words, $\om$ is $d_\eta$-closed.
   If $n\ge 2$, then $\eta$ is uniquely defined by $\omega$,
    and called \emph{the Lee form of $\omega$}.

\begin{dfn}\label{twisted_dfn_2}
A $1$-form $\lambda$ on $M$ is \emph{$\eta$-Liouville} if $d_\eta\lambda$ is nondegenerate. Its \emph{$\eta$-Liouville vector field,} or \emph{$\eta$-dual,} $Z = Z_\lambda$, is defined by the relation
$$\lambda=\iota_Z(d_\eta\lambda).$$

If moreover $M$ is oriented, $\lambda$
 is of course
  called \emph{positive} if the volume form $(d_\eta\lambda)^n$ defines the given orientation;
 and \emph{negative} otherwise.
\end{dfn}
Notice that, in any open subset of $M$ where $\eta$ admits a primitive $F$, the 
$\eta$-Liouville vector field $Z$ is nothing but
 the ordinary Liouville vector field of the ordinary Liouville form
 $e^{-F}\lambda$. In other words, the dual Liouville vector field is invariant
  by conformal equivalence of conformally Liouville forms. Besides,
 after Cartan's formula~:
\begin{equation}\label{lie_eqn}
\begin{array}{lll}
\LL_Z\lambda = \iota_Z(d\lambda) = \iota_Z(d_\eta\lambda+\eta\wedge\lambda) = (1+\eta(Z))\lambda \\
\LL_Z(d_\eta\lambda) = d\LL_Z\lambda-(\LL_Z\eta)\wedge\lambda-\eta\wedge\LL_Z\lambda = (1+\eta (Z))d_\eta\lambda.
\end{array}\end{equation}
Actually, the second equation
 is equivalent to $Z$ being a $\eta$-Liouville vector field for $\om$;
  and the first one too, under the hypothesis $\lambda(Z) = 0$.

\begin{rmk}\label{dR}
After Remark \ref{cc_rmk},
the existence of a $\eta$-Liouville $1$-form
 (\rp $\eta$-sympl\-ectic $2$-form) does not
  depend on the choice of the form $\eta$ in its de Rham cohomology class.
 Precisely, if $\lambda$ (\rp $\om$) is a
  $\eta$-Liouville $1$-form (\rp $\eta$-symplectic $2$-form), then for any function $F$ on $M$,
  the form $e^F \lambda$ (\rp $e^F\om$) is $(\eta + dF)$-Liouville (\rp $(\eta + dF)$-symplectic).
\end{rmk}

\begin{rmk}
One can alternatively interpret a $\eta$-symplectic (\rp $\eta$-Liouville) form on $M$ as a genuinely  symplectic (\rp Liouville) and \emph{equivariant}
 form on the abelian covering of $M$ defined by $\eta$, or on the universal cover.
  This alternative viewpoint does not seem to be the most efficient for the present paper.
\end{rmk}
 
\begin{dfn}\label{twisted_dfn_3} Let $\omega$ be a  $\eta$-symplectic form
on $M$ and $H\subset M$ be a cooriented
 hypersurface. As usual,
  one orients $H$ by the volume form $\iota_X(\omega^n)$,
   where $X$ is transverse to $H$ and positive (with respect to the coorientation).
    One calls $H$ of \emph{convex} (\rp \emph{concave}) \emph{contact type}
  with respect to  $\omega$ if $H$ is transverse to a
  positive (resp. negative) $\eta$-Liouville vector field $Z$ defined near $H$.
  \end{dfn}

\medbreak
\emph{Even contact structures} will play a crucial role in our construction. Recall that they are defined as maximally non-integrable hyperplane fields on even-di\-men\-sio\-nal manifolds. We will assume that the ambient manifold $M$ is $2n$-dimensional ($n\ge 1$)
 and oriented, and that the even contact structure $\varepsilon$
  is cooriented, so that $\varepsilon = \ker \lambda$
   for some non-vanishing $1$-form $\lambda$ defining the coorientation
   (\emph{even contact form}).
    The maximal non-integrability of $\varepsilon$ is equivalent to the non-vanishing of $\lambda \wedge d\lambda^{n-1}$. The
   dimension-$1$-foliation $\ZZ$ spanned by the rank-$1$ distribution
    $T\ZZ=\ker(d\lambda\vert_\varepsilon)$ is
     called the \emph{characteristic foliation} of $\varepsilon$. The foliation $\ZZ$ is transversely contact: $\varepsilon$
is invariant by any vector field tangential to $\ZZ$, and $\lambda\wedge d\lambda^{n-1}$ defines a volume
form on $TM/T\ZZ$.
We fix an orientation for $\ZZ$, namely,
 a section $Z$ of $T\ZZ$ is positive if $Z$ followed by a basis of $TM/\ZZ$ which is
 positive with respect to $\lambda \wedge d\lambda^{n-1}$, makes a positive basis of $M$.

\medskip
Let $R$ be any vector field on $M$ such that $\lambda(R) \equiv 1$,
 let $Z$ be any positive section of $T\ZZ$; choose any $1$-form $\theta$ on $M$
  such that the function $\theta(Z) + d\lambda (Z, R)$ is positive on $M$.
   Equivalently, the $1$-form $\theta - \iota_R(d\lambda)$ is positive on $\ZZ$.
  Then, the $2$-form 
\begin{equation}\label{omega_eqn}
\om_\theta = \theta \wedge \lambda + d\lambda
\end{equation}
is positive nondegenerate. Indeed,
$$\om_\theta^n = (\theta \wedge \lambda + d\lambda)^n = n \theta \wedge \lambda \wedge d\lambda^{n-1} + d\lambda^n$$ 
is positive, since $$\iota_R(\iota_Z(\om_\theta^n)) = n (\theta (Z) + d\lambda(Z, R)) d\lambda^{n-1}$$ induces a positive form on $\varepsilon /T\ZZ$, as a consequence of our choice of orientation for $\ZZ$. The homotopy class 
of  nondegenerate $2$-forms represented by $\om_\theta$
 depends only on $\varepsilon$ and of its coorientation, and not on the choices of the
  forms $\lambda$ nor $\theta$.
   Call this class the \emph{almost symplectic class associated to $\varepsilon$}. 
   
    Finally, fix a positively oriented nonsingular section $Z$ of $T\ZZ$. Since $\varepsilon$ is $Z$-invariant, for every defining form $\lambda$,
    one has a unique function $\chi_\lambda$ on $M$ such that
$$\LL_Z\lambda=\chi_\lambda\lambda.$$
If one likes better,
\begin{equation}\label{chi_eqn}
\chi_\lambda = d\lambda(Z, R) = - \lambda([Z, R])
\end{equation}
where $R$
is any vector field on $M$ such that $\lambda(R)=1$.
Clearly, for every function $u$ on $M$:
\begin{equation}\label{derivate_eqn}
\chi_{e^u\lambda} = \chi_\lambda + Z (u).
\end{equation}
\medbreak
 
 The method used in the present paper to build an $\eta$-Liouville form consists of
  two steps,
 whose details will be given in Lemmas \ref{even_contact_lem} and \ref{derivate_lem}.
 We use an auxiliary ambiant codimension-$1$ foliation $\F$.
 Essentially, first, the h-principle for overtwisted contact structures
 yields an even contact structure $\varepsilon$ whose characteristic foliation is
 transverse to $\FF$. Second, a $\eta$-Liouville form
 is found among the $1$-forms representing $\varepsilon$.

\begin{rmk} McDuff's early
 h-principle for even contact structures
  (\cite{McD87}, Proposition 7.2) does unfortunately not seem to
   allow such control on
 the characteristic foliation.
\end{rmk}

\begin{dfn}\label{ac_dfn}
Given a codimension-$1$ foliation $\F$ on a manifold $M$,
recall the \emph{leafwise} (also known as ``foliated'', or ``tangential'')
 differential forms: $\Omega^k(\F)$ stands for the collection of
the smooth sections of $\Lambda^k(T\F)$. For $\theta\in\Omega^*(M)$,
one has the restriction $\theta\vert_\F\in\Omega^*(\F)$. For 
$\theta\in\Omega^*(\F)$, one has the \emph{leafwise Cartan differential}
 $d_\F\theta\in\Omega^{*+1}(\F)$,
 such that $d_\F(\theta\vert_\F)=(d\theta)\vert_\F$. The differential operator $d_\F$
 on $\Omega^*(\F)$
  defines the leafwise
 (also known as ``foliated'') cohomology $$H^*(\F)=\ker(d_\F)/\im(d_\F)$$
 
 A leafwise $1$-form $\alpha\in\Omega^1(\F)$ is \emph{contact}
 if the leafwise ($2n-1$)-form $\alpha\wedge(d_\F\alpha)^{n-1}$ does not vanish,
 where $\dim(M)=2n$.
 
 An \emph{almost contact structure} on a manifold
 $\Sigma$ of dimension  $2n-1$ is a pair $(\alpha,\varpi)\in\Omega^1(\Sigma)\times\Omega^2(\Sigma)$
such that $\alpha\wedge\varpi^{n-1}$ does not vanish.

 In the same way, a \emph{leafwise almost contact structure} on 
 $\F$ is a pair $(\alpha,\varpi)\in\Omega^1(\F)\times\Omega^2(\F)$
such that the leafwise ($2n-1$)-form $\alpha\wedge\varpi^{n-1}$ does not vanish.

\end{dfn}

\begin{rmk}\label{lwctctandnd} In a real vector space $E$ of dimension $2n$,
given a cod\-im\-ens\-ion-$1$ vector subspace $\Sigma\subset E$
and a vector $Z\in E$ not in $\Sigma$, let
$\theta$ be the linear form of kernel $\Sigma$ such that $\theta(Z)=1$,
and let $\pi:E\to\Sigma$ be the linear projection parallel to $Z$.
Then, the linear mappings
$$\Lambda^2(E)\to\Lambda^1(\Sigma)
\times\Lambda^2(\Sigma):
\omega\mapsto(\iota_Z(\omega)\vert_\Sigma,\omega\vert_\Sigma)$$
$$\Lambda^1(\Sigma)
\times\Lambda^2(\Sigma)\to\Lambda^2(E):
(\alpha,\varpi)\mapsto\theta\wedge\pi^*(\alpha)+
\pi^*(\varpi)$$
are reciprocal isomorphisms. Moreover, $\omega^n\neq 0$ if and only if
the corresponding pair $(\alpha,\varpi)$ satisfies $\alpha\wedge\varpi^{n-1}\neq 0$.

Consequently, given a cooriented codimension-$1$ foliation $\F$ on the $(2n)$-manifold $M$,
choose a nonvanishing $1$-form $\theta$ on $M$ defining $\F$
and a vector field $Z$ such that $\theta(Z)=1$; and denote by $\pi$  the projection $TM\to T\F$
parallel to $Z$.
Then, the linear mappings
$$\Omega^2(M)\to\Omega^1(\F)
\times\Omega^2(\F):
\omega\mapsto(\iota_Z(\omega)\vert_\F,\omega\vert_\F)$$
$$\Omega^1(\F)
\times\Omega^2(\F)\to\Omega^2(M):
(\alpha,\varpi)\mapsto\theta\wedge\pi^*(\alpha)+
\pi^*(\varpi)$$
are reciprocal isomorphisms. Moreover, $\omega$ is an almost symplectic structure on $M$
 if and only if
 the corresponding pair $(\alpha,\varpi)$ is a leafwise almost contact structure on $\F$.
\end{rmk}
 
 \begin{lem}\label{even_contact_lem}
  (i) On a manifold $M$, let $\omega$ be a nondegenerate
 $2$-form, and $\F$ be a cooriented codimension-$1$
 foliation.
 
 Then, $M$ admits  an even contact structure $\varepsilon$ such that
 \begin{itemize}
 \item $\omega$ lies in the almost symplectic class associated with
  $\varepsilon$;
  \item
The characteristic foliation of $\varepsilon$ is positively transverse to $\FF$;
 \item On every leaf $L$ of $\F$,  the contact structure $\varepsilon\cap L$ is overtwisted.
 \end{itemize}
 
 (ii) Moreover, given a closed subset $A\subset M$,
  assume that an even contact structure $\varepsilon_A$
  is already given on some open
 neighborhood $U$ of $A$,
 such that
 
  \begin{itemize}
 \item  $\omega\vert_U$ lies in the almost symplectic class associated with $\varepsilon_A$;
  \item
The characteristic foliation of $\varepsilon_A$ is positively transverse to $\FF\vert_U$;
 \item On every leaf $L$ of $\F$ which is entirely contained in $U$, if any,
   the contact structure $\varepsilon_A\cap L$ is overtwisted.
 \end{itemize}
   
  Then, one can choose $\varepsilon$ to coincide with
 $\varepsilon_A$ on some smaller neighborhood of $A$.
 \end{lem}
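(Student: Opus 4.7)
The plan is to use the correspondence of Remark~\ref{lwctctandnd} between nondegenerate $2$-forms on $M$ and leafwise almost contact structures on $\F$, to reduce the construction of $\varepsilon$ to that of a leafwise contact form $\alpha\in\Omega^{1}(\F)$ whose restriction to each leaf is overtwisted; this leafwise contact form is produced by a foliated-parametric application of the Borman-Eliashberg-Murphy h-principle for overtwisted contact structures, and is then reassembled into an ambient even contact form.

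For Part~(i), I fix a non-vanishing $1$-form $\theta$ on $M$ defining $\F$ compatibly with the coorientation, a vector field $Z$ with $\theta(Z)=1$, and let $\pi:TM\to T\F$ be the projection parallel to $Z$. By Remark~\ref{lwctctandnd} the form $\omega$ corresponds to the leafwise almost contact structure $(\alpha_{0},\varpi_{0}):=(\iota_{Z}\omega\vert_{\F},\omega\vert_{\F})$ on $\F$. Applying a foliated-parametric version of the BEM h-principle, I deform $(\alpha_{0},\varpi_{0})$, through leafwise almost contact structures on $\F$, to a pair of the form $(\alpha,d_{\F}\alpha)$ with $\alpha$ a leafwise contact form that is overtwisted on every leaf; since $\F$ has codimension one, this foliated step can be implemented by a flow-box induction in the transverse direction, or equivalently via microflexibility of the sheaf of overtwisted contact forms.

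With $\alpha$ in hand, set $\lambda:=\pi^{*}\alpha$, so $\lambda\vert_{\F}=\alpha$ and $\lambda(Z)=0$, and let $\varepsilon:=\ker\lambda$. Decomposing $d\lambda=\pi^{*}d_{\F}\alpha+\theta\wedge\beta$ with $\beta=\iota_{Z}d\lambda$ a leafwise $1$-form (the transverse derivative of $\alpha$ read in a flow-box), evaluation of $\lambda\wedge(d\lambda)^{n-1}$ on any $T\F$-basis reduces to $\alpha\wedge(d_{\F}\alpha)^{n-1}$, nowhere zero by the contact condition, so $\varepsilon$ is even contact with $\varepsilon\cap T\F=\ker\alpha$ the leafwise overtwisted contact distribution. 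A short computation identifies the characteristic line $\ker(d\lambda\vert_{\varepsilon})$ at every point as $\langle w+Z\rangle$, where $w$ is the unique element of $T\F\cap\ker\alpha$ satisfying $\iota_{w}d_{\F}\alpha\equiv-\beta\pmod{\alpha}$, so the characteristic foliation is positively transverse to $\F$. To check that $\omega$ lies in the almost symplectic class of $\varepsilon$, the BEM homotopy gives, via Remark~\ref{lwctctandnd}, a path of nondegenerate $2$-forms from $\omega$ to $\omega'':=\theta\wedge\pi^{*}\alpha+\pi^{*}d_{\F}\alpha$, while the reference representative of the class of $\varepsilon$ is $\omega_{\theta}=\theta\wedge\lambda+d\lambda=\omega''+\theta\wedge\beta$; a conformal rescaling of $\alpha$ by $e^{f(t)}$ with $f'$ sufficiently large in the transverse direction makes the straight-line interpolation $\omega_{s}=\theta\wedge\pi^{*}(\alpha+s\beta)+\pi^{*}d_{\F}\alpha$ remain nondegenerate throughout, since the condition $(\alpha+s\beta)\wedge(d_{\F}\alpha)^{n-1}\neq 0$ is then preserved.

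For Part~(ii), extract from $\varepsilon_{A}$ a defining form $\lambda_{A}$ and let $\alpha_{A}:=\lambda_{A}\vert_{\F}$, a leafwise overtwisted contact form on $\F\vert_{U}$; the hypothesis that $\omega\vert_{U}$ lies in the almost symplectic class of $\varepsilon_{A}$ yields, via Remark~\ref{lwctctandnd}, a homotopy of leafwise almost contact structures between $(\iota_{Z}\omega\vert_{\F},\omega\vert_{\F})\vert_{U}$ and $(\alpha_{A},d_{\F}\alpha_{A})$. The relative version of the parametric BEM h-principle then extends $\alpha_{A}$ to a leafwise overtwisted contact form $\alpha$ on all of $\F$ that agrees with $\alpha_{A}$ on a smaller neighborhood of $A$, and the reconstruction of $\lambda$ from Part~(i), carried out relative to $\lambda_{A}$ near $A$, produces the desired even contact structure $\varepsilon$ coinciding with $\varepsilon_{A}$ near $A$. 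The main obstacle throughout is the clean formulation and application of the foliated-parametric (and relative) BEM h-principle: parameterizing an h-principle in the transverse direction of a foliation requires care, and is handled either via microflexibility of the overtwisted-contact sheaf or through the explicit flow-box induction indicated above.
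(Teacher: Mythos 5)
Your overall strategy is the paper's: convert $\omega$ into a leafwise almost contact structure via the correspondence of Remark~\ref{lwctctandnd}, invoke the overtwisted h-principle leafwise to replace it by a leafwise overtwisted contact form $\alpha$, and take $\varepsilon$ to be spanned by $\xi=\ker\alpha\cap T\F$ and a positively transverse vector field $Z$ (equivalently, $\varepsilon=\ker\pi^*\alpha$). Your direct computation of the characteristic line as $\langle w+Z\rangle$ is a legitimate substitute for the paper's argument, which instead runs a foliated Gray--Moser argument to produce the unique $X\in\xi$ such that $Z'=Z+X$ preserves $\varepsilon$; the paper's route has the advantage of making the local uniqueness of $Z'$ explicit, which is what guarantees $Z'=Z$ near $A$ in part~(ii).

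Two steps need repair. First, the foliated h-principle that you flag as the ``main obstacle'' is not something to re-derive: it is precisely Theorem~1.5 of Borman--Eliashberg--Murphy (and Theorem~1.6 for the relative/uniqueness statement needed in part~(ii)), stated for leafwise contact structures on foliations. Neither of your proposed substitutes would work: overtwistedness is not an open (hence not a microflexible) leafwise relation, and a ``flow-box induction in the transverse direction'' cannot be organized for a general codimension-one foliation, whose leaves may be dense and whose holonomy obstructs chart-by-chart gluing. Second, your verification that $\omega$ lies in the almost symplectic class of $\varepsilon$ rests on a conformal rescaling $\alpha\mapsto e^{f}\alpha$ with $Z(f)$ everywhere large; no such $f$ exists on a closed manifold, nor whenever $Z$ has a recurrent orbit (as in the solid-torus application of this very lemma), since $Z(f)$ vanishes at an extremum of $f$. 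The discrepancy $\theta\wedge\beta$, with $\beta=\LL_Z\lambda$, between $\theta\wedge\lambda+\pi^*(d_\F\alpha)$ and $\om_\theta=\theta\wedge\lambda+d\lambda$ is better absorbed by enlarging the \emph{transverse} form: replacing $\theta$ by $K\theta$ with $K\gg 0$ keeps $\om_{K\theta}$ nondegenerate and in the class associated with $\varepsilon$ (the positivity condition preceding Equation~(\ref{omega_eqn}) holds for $K$ large because $\theta(Z')>0$), while along the straight-line interpolation the induced leafwise pair becomes $(\alpha+K^{-1}s\,\beta\vert_\F,\,d_\F\alpha)$, which remains almost contact for $K$ large. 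With these corrections your argument coincides with the paper's.
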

 
 Note --- For the case $\dim(M)=2$: one agrees that any nonsingular $1$-form
on a $1$-manifold is an \emph{overtwisted} contact form on this manifold.

 \begin{proof}[Proof of Lemma \ref{derivate_lem}]
 
(i) Choose a vector field $Z$ over $M$, positively transverse to $\F$. The
ambiant nondegenerate $2$-form $\omega$
  induces on $\F$ a leafwise almost contact structure
  $(\iota_Z(\omega)\vert_\F,\omega\vert_\F)$ (Remark \ref{lwctctandnd}).
 After the h-principle for overtwisted contact structures
  on foliations
(\cite{borman_eliashberg_murphy_15}, Theorem 1.5), $M$ carries
a leafwise contact structure $\xi\subset{T}\F$
 (a cooriented $(2n-2)$-plane field defining on every leaf
 a contact structure) which is overtwisted in every leaf, and homotopic to
  $(\iota_Z(\omega)\vert_\F,\omega\vert_\F)$ as a leafwise almost contact structure. 
 Let  $\varepsilon$ be on $M$ the hyperplane field spanned by $\xi$ and $Z$,
 and $\lambda$ be a nonvanishing $1$-form defining $\varepsilon$ and its coorientation.

We claim that there is a unique vector field $X$ over $M$, contained in $\xi$,
and such that the flow $(\phi^t)$ of $Z'=Z+X$ preserves $\varepsilon$.

Indeed, this foliated version of the Gray
stability theorem can be proved, like the classical one, by a Moser-type argument:

The condition $(\phi_t)^*(\varepsilon)=\varepsilon$ amounts to $\varphi_t^*\lambda = g_t\lambda$, for some smooth family of positive functions $(g_t)$.
 Derivating with respect to $t$ yields~:
$$\phi_t^*(\EL_{Z'} \lambda) = \dot{g_t} \lambda,$$
which implies that
\begin{equation}\label{Gt}
\iota_{Z'}(d\lambda) = G_t \lambda
\end{equation}
where
$G_t = ({\dot{g_t}}/{g_t})\circ\phi_t^{-1}$. 
Let $R$ be the vector field on $M$ tangential to $\F$, and which is, in
every leaf $L$ of $\F$, the Reeb vector field of the contact form $\lambda\vert_L$.
Since ${Z'} = Z + X$, we can rewrite (\ref{Gt}) as a system of two equations~:
$$\left\{\begin{array}{lll}
\iota_X(d \lambda)|_\xi = 
- \iota_Z(d\lambda)|_\xi \\
d \lambda(Z,R) = G
\end{array}\right.$$
The first one uniquely determines $X$ since $\lambda$ is leafwise contact;
 the second one determines $G$.  The
 claim is proved.
 
 Clearly, $\varepsilon$ is an even contact structure whose characteristic foliation
 is positively spanned by $Z'$. The almost symplectic class
 associated with $\varepsilon$ does coincides with that of $\om$, thanks to
 Equation (\ref{omega_eqn}) and
 Remark \ref{lwctctandnd}.

(ii) The plane field $\xi_A=\varepsilon_A\cap T\F$
is contact in every leaf of $\F\vert_U$.
 After the unicity up to isotopy of overtwisted contact structures on foliations
(\cite{borman_eliashberg_murphy_15}, Theorem 1.6),  after pushing $\xi$
by an isotopy of $M$ tangential to $\F$,
we can arrange that $\xi=\xi_A$ on $\nb_M(A)$. Then,
 choose $Z$ so that
it spans positively the characteristic foliation of $\varepsilon_A$
 on $\nb_M(A)$. Hence, $\varepsilon=\varepsilon_A$ on $\nb_M(A)$.
 After the local unicity property of $X$, one has $X=0$ on $\nb_M(A)$.
 Hence,  $Z'=Z$ on $\nb_M(A)$.
\end{proof}

\begin{lem}\label{derivate_lem} On the even-dimensional oriented
manifold $M$, let $\eta$ be a closed $1$-form,
and let $\lambda$ be a non-vanishing $1$-form.

i) Assume that $\lambda$ is $\eta$-Liouville. Then, $\lambda$
 yields an even contact structure $\ker \lambda$, whose characteristic foliation
 is positively spanned by the
 $\eta$-Liouville vector field $Z_\lambda$, and
  whose
 associated almost symplectic class is represented by $d_\eta \lambda$.

ii) Conversely, assume that $\lambda$ defines an even contact structure. Fix a positive section $Z$ of the characteristic foliation. Then, $\lambda$ is $\eta$-Liouville positive if and only if $\chi_\lambda > \eta (Z)$. Moreover, if so, the $\eta$-Liouville vector field $Z_\lambda$
 $\eta$-dual
to $\lambda$ coincides with $(\chi_\lambda-\eta (Z))\mun Z$.

\end{lem}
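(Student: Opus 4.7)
The plan is to reduce everything to a single pointwise calculation of $(d_\eta\lambda)^n$, phrased as two algebraic identities. Because $\eta\wedge\lambda$ contains $\lambda$ and $\lambda\wedge\lambda=0$, the binomial expansion collapses to
\begin{equation*}
(d_\eta\lambda)^n = (d\lambda)^n - n\,\eta\wedge\lambda\wedge(d\lambda)^{n-1}, \qquad \lambda\wedge(d_\eta\lambda)^{n-1} = \lambda\wedge(d\lambda)^{n-1}.
\end{equation*}
These two identities, together with the standard relation $\iota_Z(d\lambda) = \chi_\lambda\lambda$ (which follows from $\EL_Z\lambda=\chi_\lambda\lambda$ via Cartan, since $\lambda(Z)=0$), drive both parts.

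For part (i), first $\lambda(Z_\lambda)=\iota_{Z_\lambda}\iota_{Z_\lambda}(d_\eta\lambda)=0$, so $Z_\lambda\in\ker\lambda$. Expanding the defining relation $\lambda=\iota_{Z_\lambda}(d_\eta\lambda)=\iota_{Z_\lambda}(d\lambda)-\eta(Z_\lambda)\lambda$ gives $\iota_{Z_\lambda}(d\lambda)=(1+\eta(Z_\lambda))\lambda$, which vanishes on $\ker\lambda$; hence $Z_\lambda$ spans the characteristic line field. Contracting the volume form $(d_\eta\lambda)^n$ with $Z_\lambda$ and using the second identity above,
\begin{equation*}
\iota_{Z_\lambda}\bigl((d_\eta\lambda)^n\bigr) = n\,\lambda\wedge(d_\eta\lambda)^{n-1} = n\,\lambda\wedge(d\lambda)^{n-1};
\end{equation*}
the left side is nowhere zero (a volume form contracted with a nonvanishing vector field), so $\lambda\wedge(d\lambda)^{n-1}$ is nonvanishing and $\ker\lambda$ is an even contact structure. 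The same identity, compared to the paper's orientation convention for $\ZZ$, shows that when $\lambda$ is positive $\eta$-Liouville, $Z_\lambda$ is a positive section. Finally, to identify the almost symplectic class, I would set $\theta=-\eta$ in (\ref{omega_eqn}); then $\omega_\theta=d_\eta\lambda$ exactly, and the admissibility condition $\theta(Z_\lambda)+d\lambda(Z_\lambda,R)>0$ reduces, via $\iota_{Z_\lambda}(d\lambda)=(1+\eta(Z_\lambda))\lambda$ and $\lambda(R)=1$, to $-\eta(Z_\lambda)+1+\eta(Z_\lambda)=1>0$.

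For part (ii), I would contract the first identity above with the chosen positive characteristic section $Z$. Using $\iota_Z(d\lambda)=\chi_\lambda\lambda$ and observing that $\iota_Z(\eta\wedge\lambda\wedge(d\lambda)^{n-1})=\eta(Z)\,\lambda\wedge(d\lambda)^{n-1}$ (the other piece dies against the ambient $\lambda$), one gets
\begin{equation*}
\iota_Z\bigl((d_\eta\lambda)^n\bigr) = n\bigl(\chi_\lambda-\eta(Z)\bigr)\,\lambda\wedge(d\lambda)^{n-1}.
\end{equation*}
Since $\lambda\wedge(d\lambda)^{n-1}$ is a nonvanishing $(2n-1)$-form and $Z$ is nonvanishing, and since contraction with a nonvanishing vector field is injective on top forms, $(d_\eta\lambda)^n$ is everywhere nonzero iff $\chi_\lambda\neq\eta(Z)$ pointwise; evaluating on an ordered basis $(Z,e_1,\dots,e_{2n-1})$ with the $e_i$'s positive for $\lambda\wedge(d\lambda)^{n-1}$ identifies the positive case with $\chi_\lambda>\eta(Z)$. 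The explicit formula for $Z_\lambda$ is then a direct check: for $Z'=(\chi_\lambda-\eta(Z))^{-1}Z$, using $\lambda(Z)=0$ and $\iota_Z(d\lambda)=\chi_\lambda\lambda$, one computes $\iota_{Z'}(d_\eta\lambda)=(\chi_\lambda-\eta(Z))^{-1}(\chi_\lambda\lambda-\eta(Z)\lambda)=\lambda$, and uniqueness of the $\eta$-dual closes the argument. The only delicate step is the sign bookkeeping needed to pass from nondegeneracy of $(d_\eta\lambda)^n$ to the strict inequality $\chi_\lambda>\eta(Z)$; everything else is a routine pointwise identity.
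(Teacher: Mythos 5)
Your proof is correct and takes essentially the same route as the paper's: the same binomial collapse of $(d_\eta\lambda)^n$ and of $\lambda\wedge(d_\eta\lambda)^{n-1}$, the same choice $\theta=-\eta$ to identify the almost symplectic class via Equation (\ref{omega_eqn}), and the same contraction $\iota_Z(d_\eta\lambda)=(\chi_\lambda-\eta(Z))\lambda$ to pin down the $\eta$-dual vector field. The only cosmetic difference is that in (ii) you contract $(d_\eta\lambda)^n$ with $Z$ alone and compare against $\lambda\wedge(d\lambda)^{n-1}$, whereas the paper contracts with both $Z$ and $R$ and compares against $(d\lambda)^{n-1}$ on $\varepsilon/\ZZ$.
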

    
\begin{proof} Let  $R$ be a vector field on $M$ such that $\lambda(R) \equiv 1$.

 (i) Since $\lambda$ is $\eta$-Liouville,
 the $(2n-1)$-form $$n \lambda \wedge (d\lambda)^{n-1} =  n \lambda \wedge (d_\eta \lambda)^{n-1} = \iota_{Z_\lambda} (d_\eta\lambda)^n$$ does not vanish, hence $\ker\lambda$ is an even contact structure.
 Clearly,  $Z_\lambda$ positively spans the characteristic foliation.
  Put $\theta=-\eta$. Then, 
 $$d_\eta\lambda(Z_\lambda,R)=\iota_{Z_\lambda}(d_\eta\lambda)(R)=\lambda(R)=1$$
 $$(\eta\wedge\lambda)(Z_\lambda,R)=\eta(Z_\lambda)$$
 Hence,
 $\theta(Z_\lambda)+d\lambda(Z_\lambda,R)=1$ is positive;
while $\om_\theta = d_\eta\lambda$ (recall Equation (\ref{omega_eqn})). 

(ii) The vector field $Z$ spanning
positively the characteristic foliation $\ZZ$, the $1$-form $\lambda$ is $\eta$-Liouville
positive if and only if $\iota_R(\iota_Z( (d_\eta\lambda)^n))$ induces a positive form on $\varepsilon/\ZZ$.  This condition amounts to the positivity of the function $\chi_\lambda  - \eta(Z)$, since
$$\iota_R(\iota_Z( (d_\eta\lambda)^n))=
\iota_R(\iota_Z( d\lambda^n - n \eta \wedge \lambda \wedge d\lambda^{n-1}))
 = n \left(\chi_\lambda  - \eta(Z) \right) \wedge d\lambda^{n-1}$$
and since $d\lambda^{n-1}$ induces a positive volume form on $\varepsilon / \ZZ$.
  Finally, the value of $Z_\lambda$ results from the computation:
$$\iota_Z(d_\eta \lambda) = \LL_Z \lambda  - \iota_Z (\eta \wedge \lambda) = \left(\chi_\lambda - \eta (Z)\right) \lambda.$$
  \end{proof}

As a first application of Lemma \ref{derivate_lem} (ii), one has for  conformal symplectic structures an obvious  cut-and-paste method  along hypersurfaces of contact type  (Definition \ref{twisted_dfn_3}), generalizing the classical method for genuine symplectic structures.
The following lemma gives some precisions about these hypersurfaces.

\begin{lem}\label{hypersurface_lem}
 Let $\omega$ be a $\eta$-symplectic form on $M$, and
 $H\subset M$ be a cooriented hypersurface.

i) If $H$ is transverse to a $\eta$-Liouville vector field $Z$
for $\omega$, then
 $\lambda=\iota_Z(\om)$ is a  $(d_\eta)$-primitive of $\om$
near $H$, and $\alpha=\lambda\vert_H$
 is a contact form on $H$.

ii)   Conversely, if  $\om$ admits in restriction to $H$ a  $(d_\eta)$-primitive $\alpha$
which is
 contact,
  then
   $\alpha$ extends to a  $(d_\eta)$-primitive $\lambda$ of $\om$ on $\nb_M(H)$
   whose $\eta$-dual vector field $Z$ is
    transverse to $H$.
    
 iii)   Moreover, in (i) and (ii),
  $Z$ is positively (resp. negatively) transverse to $H$ iff $\alpha$
    is positive (resp. negative) as a contact form on $H$.
   \end{lem}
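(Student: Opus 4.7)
My plan is to treat the three parts by direct computation, the only subtle ingredient being a relative Poincaré-type lemma for $d_\eta$ needed in (ii).

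For (i), since $Z$ is $\eta$-Liouville for $\omega$, Equation (\ref{lie_eqn}) gives $\LL_Z\omega=(1+\eta(Z))\omega$, and combining this with $d\omega=\eta\wedge\omega$ through Cartan's formula yields $d_\eta(\iota_Z\omega)=\omega$, so $\lambda:=\iota_Z\omega$ is a $d_\eta$-primitive. For the contact condition on $\alpha=\lambda|_H$, I would expand $d\alpha=\omega|_H+\eta|_H\wedge\alpha$: every cross term in $\alpha\wedge(d\alpha)^{n-1}$ contains two factors of $\alpha$ and hence vanishes, so only $\alpha\wedge\omega|_H^{n-1}=\tfrac{1}{n}\iota_Z(\omega^n)|_H$ survives, and this is a nonvanishing top form on $H$ by transversality of $Z$ and nondegeneracy of $\omega$.

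For (ii), I would extend $\alpha$ to an arbitrary $1$-form $\mu$ on a tubular neighborhood $\nb_M(H)\cong H\times(-\epsilon,\epsilon)$ (for instance via pullback by the projection onto $H$), and set $\gamma:=\omega-d_\eta\mu$. Then $\gamma$ is $d_\eta$-closed and satisfies $\gamma|_{TH}=0$ since $d_\eta(\mu|_H)=d_\eta\alpha=\omega|_H$. The heart of the argument is to produce a $d_\eta$-primitive $\nu$ of $\gamma$ that vanishes on $H$; then $\lambda:=\mu+\nu$ is the required extension. I would apply Lemma~\ref{poincare_lem} with the radial retraction $h((x,s),t)=(x,(1-t)s)$: since $h_1$ factors through $H$ where $\gamma|_{TH}=0$, one has $h_1^*\gamma=0$, and the Poincaré formula collapses to $d_\eta\HH(\gamma)=-\gamma$; moreover $\partial h/\partial t$ vanishes identically along $H\times\I$, so the integrand defining $\HH(\gamma)$ is zero on $H$ and $\HH(\gamma)|_H=0$. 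Taking $\nu:=-\HH(\gamma)$ settles the extension. For transversality of $Z:=Z_\lambda$ along $H$, I would reason pointwise: the contact hypothesis makes $\omega|_H=d_\eta\alpha$ restrict nondegenerately to $\ker\alpha$, so $\ker(\omega|_H)$ is a line transverse to $\ker\alpha$, spanned by some $R_x$ with $\alpha_x(R_x)\neq 0$; if $Z_x$ were tangent to $H$, the identity $\iota_{Z_x}(\omega|_H)=\alpha_x$ paired with $R_x$ would give $0=\omega|_H(Z_x,R_x)=\alpha_x(R_x)\neq 0$, a contradiction.

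For (iii), the formula $\iota_Z(\omega^n)|_H=n\,\alpha\wedge(d\alpha)^{n-1}$ obtained in (i) directly identifies the orientation $Z$ induces on $H$ with the contact orientation of $\alpha$: $Z$ is positively (resp. negatively) transverse iff $\iota_Z(\omega^n)|_H$ is a positive (resp. negative) volume form on $H$, iff $\alpha$ is a positive (resp. negative) contact form. The main obstacle is the relative Poincaré step in (ii); the key point is the choice of retraction, which must both satisfy $h_1(M')\subset H$ (to trivialize $h_1^*\gamma$) and have $\partial h/\partial t$ vanishing along $H\times\I$ (to ensure $\HH(\gamma)|_H=0$). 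The radial retraction fulfills both conditions simultaneously, which is why the construction goes through.
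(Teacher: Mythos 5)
Your proof is correct and follows essentially the same route as the paper: part (i) is the computation underlying Lemma \ref{derivate_lem}(i), part (ii) rests on the generalized Poincar\'e Lemma \ref{poincare_lem} applied to a retraction of a tubular neighborhood onto $H$, and part (iii) reads off the identity $\iota_Z(\omega^n)\vert_H=n\,\alpha\wedge(d\alpha)^{n-1}$. The only (harmless) stylistic difference is in (ii), where the paper builds the primitive directly as $e^{-F_1}h_1^*(\alpha)-\HH(\omega)$ rather than first subtracting $d_\eta$ of an arbitrary extension of $\alpha$ and then correcting the $d_\eta$-closed remainder.
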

      We call $H$ of \emph{overtwisted} contact type if moreover $\ker\alpha$ is overtwisted on each connected component of $H$.
      \begin{proof} (i): after Lemma \ref{derivate_lem} (i).
      
      (ii):
   To get the extension $\lambda$,
   consider a tubular neighborhood $T$ of $H$ in $M$
   and a deformation retraction $h=(h_t):T\times\I\to T$ such that $h_0=\id_T$ and
   $h_1(T)=H$ and $h_t\vert_H=\id_H$.
  After the generalized
   Poincar\'e lemma \ref{poincare_lem} applied to $\omega$,
   $$e^{-F_1}h_1^*(\omega)-\omega=d_{\eta}\HH(\omega)$$
   On the other hand, after the ordinary Poincar\'e lemma applied to $\eta$,
   $$h_1^*(\eta)-\eta=dF_1$$
   so, using Remark \ref{cc_rmk}:
   $$e^{-F_1}h_1^*(\omega)=e^{-F_1}h_1^*(d\alpha-\eta\wedge\alpha)
   =e^{-F_1}(dh_1^*(\alpha)-h_1^*(\eta)\wedge h_1^*(\alpha))$$
   $$=e^{-F_1}d_{h_1^*(\eta)}h_1^*(\alpha)
   =d_\eta(e^{-F_1}h_1^*(\alpha))$$
   Hence,
   $$\lambda=e^{-F_1}h_1^*(\alpha)-\HH(\omega)$$
   is a ($d_\eta$)-primitive of $\omega$ on $T$; and coincides with $\alpha$
   in restriction to $H$, since $F_1$ and $\HH(\omega)$ vanish identically on $H$.
  
     Finally, $\lambda\vert_H=\alpha$ being positive (resp. negative)
 contact, after Lemma \ref{derivate_lem} (i),
      $Z$ is
    transverse to $H$.
    
    (iii) Obvious from Lemma \ref{derivate_lem} (i).
   \end{proof}

\section{Existence of conformal symplectic structures}\label{conformal_sec}
   
\begin{TA} On a closed connected manifold $M$ of dimension $2n\ge 2$, let $\eta$ be a closed, \emph{non-exact}
 $1$-form; and let $\omega$ be a nondegenerate $2$-form.
 
 Then, $\om$ is homotopic to a $\eta$-symplectic form,
  whose Novikov cohomology class in $H^2_{\eta}(M)$ may be prescribed.
\end{TA}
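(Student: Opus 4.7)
The plan is to implement the two-step procedure announced just before Lemma \ref{even_contact_lem}: first, use the h-principle for overtwisted contact structures (via Lemma \ref{even_contact_lem}) to produce an auxiliary even contact structure $\varepsilon=\ker\lambda_0$ on $M$ whose associated almost symplectic class is the homotopy class of $\omega$; second, find an $\eta$-Liouville 1-form among the defining 1-forms of $\varepsilon$ by conformally rescaling $\lambda_0$, using the criterion of Lemma \ref{derivate_lem}(ii). The non-exactness of $[\eta]$ will enter through the choice of an auxiliary cooriented codimension-1 foliation $\F$ on $M$ adapted to $\eta$.

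First I would fix such a foliation $\F$. Since $[\eta]\neq 0$ we have $b_1(M)>0$, and replacing $\eta$ within its de Rham class is legitimate by Remark \ref{cc_rmk}; I would try to take for $\F$ either $\ker\eta$ (if some representative of $[\eta]$ is nowhere-vanishing) or, after approximating $[\eta]$ by a rational class, the foliation by fibers of the fibration $M\to \S^1$ supplied by Tischler's theorem for that rational representative. Coorient $\F$ so that $\eta(Z)<0$ for every positive vector $Z$ transverse to $\F$. Applying Lemma \ref{even_contact_lem}(i) to $\omega$ and $\F$ then yields an even contact structure $\varepsilon=\ker\lambda_0$ with characteristic foliation $\ZZ$ positively transverse to $\F$, overtwisted on every leaf of $\F$, and whose almost symplectic class contains $\omega$.

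Second, by Lemma \ref{derivate_lem}(ii), the conformally rescaled form $\lambda=e^u\lambda_0$ is $\eta$-Liouville if and only if $Z(u)>\eta(Z)-\chi_{\lambda_0}$ pointwise, for $Z$ a positive section of $T\ZZ$. With $\eta(Z)<0$ built into the sign convention, the pointwise part of the inequality is mild, and global solvability on the closed manifold $M$ reduces, in the presence of closed $Z$-orbits, to the cohomological condition $\int_\gamma \chi_{\lambda_0}\,dt>\int_\gamma \eta$ along every such orbit $\gamma$. The right-hand side is strictly negative by the sign convention, and by exploiting the flexibility in Lemma \ref{even_contact_lem} (say, constructing a leafwise contact structure whose Reeb dynamics are sufficiently expansive, so that $\chi_{\lambda_0}$ is bounded below on $M$), one can make the left-hand side arbitrarily large. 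Solving for $u$ and setting $\omega':=d_\eta\lambda$ then gives an $\eta$-symplectic form lying in the almost symplectic class of $\omega$, hence homotopic to $\omega$ through nondegenerate 2-forms. The prescribed Novikov cohomology class $[\omega']_\eta\in H^2_\eta(M)$ is realized by adding a sufficiently small $d_\eta$-closed 2-form representing the desired difference, nondegeneracy being an open condition.

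The main obstacle is the global solvability of the differential inequality $Z(u)>\eta(Z)-\chi_{\lambda_0}$ in the second step: controlling $\chi_{\lambda_0}$ against every $Z$-invariant probability measure (and in particular along each closed $\ZZ$-orbit) goes beyond the bare statement of Lemma \ref{even_contact_lem}, and seems to require a refinement producing an even contact structure whose leafwise Reeb dynamics dominate the flux of $\eta$ along the transversal $\ZZ$. Ensuring, via the choice of $\F$, that a representative of $[\eta]$ with $\eta(Z)<0$ on positive transversals exists at all is a secondary topological subtlety, absent in the rank-one integer case handled by Eliashberg--Murphy but central in the general setting addressed here.
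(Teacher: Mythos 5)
Your overall two-step strategy (even contact structure via Lemma \ref{even_contact_lem}, then a conformal rescaling $e^u\lambda_0$ made $\eta$-Liouville via the criterion $\chi_{e^u\lambda_0}>\eta(Z)$ of Lemma \ref{derivate_lem}(ii)) is indeed the engine of the paper, but applying it \emph{globally on the closed manifold} runs into two genuine obstructions, and the second one is exactly the point your proposal leaves open. First, the auxiliary cooriented codimension-$1$ foliation $\F$ you want on all of $M$ need not exist: a closed manifold carries a hyperplane field only if $\chi(M)=0$, and in particular $[\eta]\neq 0$ does not provide a nowhere-vanishing representative (already for $M=\Sigma_2$, or for $\Sigma_2\times\S^2$ in dimension $4$). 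The paper never foliates the closed manifold; it foliates only the complement $W'$ of two solid tori and of handle neighborhoods, by the level sets of a Morse function, where no Euler obstruction arises.

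Second, and more seriously, the differential inequality $Z(u)>\eta(Z)-\chi_{\lambda_0}$ on a closed manifold is obstructed by the $Z$-invariant measures: the average $\int\chi_{\lambda_0}\,d\mu$ is unchanged by conformal rescaling of $\lambda_0$, so it is an invariant of the even contact structure that you must control, and your suggestion that a suitable choice of overtwisted leafwise contact structure makes $\chi_{\lambda_0}$ bounded below is unsubstantiated --- indeed Remark \ref{positive_note} shows that $c_n\ge 0$ for $n>1$, i.e.\ overtwistedness \emph{forbids} making $\chi_\lambda$ positive on a solid torus whose characteristic field enters it, precisely because that would produce a symplectic filling of an overtwisted contact structure. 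The paper's resolution is structurally different and you should compare it: the non-exactness of $\eta$ is used to find two embedded circles with $\int_\gamma\eta<-c_n$, thickened to solid tori $T_\pm$ on which Lemma \ref{toric_lem} hands you explicit $\eta$-Liouville forms with entering/exiting $\eta$-dual fields and overtwisted boundary; the handle neighborhoods of the critical points of a Morse function without interior extrema are treated by the Eliashberg--Murphy symplectization of cobordisms (with $\eta=0$ there); and on the remaining piece $W'$ the characteristic flow is gradient-like, so the inequality $Z\cdot g<\chi_\lambda-\eta(Z)$ is solved simply by scaling a function $g$ with $Z\cdot g<0$. All recurrence of the Liouville dynamics is thus confined to regions where the structure is built by hand, which is what your global rescaling cannot achieve. (Your last step, realizing a prescribed class by $\varpi+Kd_\eta\mu$ with $K$ large, agrees with the paper and is fine.)
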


When moreover the cohomology class of $\eta$ is integral: $[\eta]\in H^1(M;\Z)$,
Eliashberg-Murphy have already
 obtained \cite{eliashberg_murphy_20} that, for some constant $c \neq 0$, the manifold $M$ admits a $(c\eta)$-symplectic form 
 homotopic to $\omega$ as a nondegenerate $2$-form.

We choose to prove Theorem $A$ with by
means of an auxiliary Morse function. One could, alternatively, use a handle decomposition;
 but the Morse function method
 adapts painlessly to the foliated framework (see Section \ref{cobordism_sec}).

\medskip
One begins
by endowing the compact solid torus  
$$T^{2n} = \S^1\times\D^{2n-1}$$ 
with a conformal symplectic structure inducing an overtwisted contact structure on its boundary,
either concave or convex. These conformal symplectic tori
will further down
 somehow play the role of ``symplectic cap''
 and ``symplectic cup'' in the proof of Theorem $A$. 

Denote by $\theta$ the pullback to $T^{2n}$ of the volume form on $\S^1$ with total volume one, and endow $T^{2n}$ with an arbitrary orientation.
  
\begin{lem}\label{toric_lem}
 For every integer $n\ge 1$, there exist a real constant $c_n$, and on $T^{2n}$
 a $1$-form $\lambda$ and a vector field $Z$ such that
\begin{enumerate}
\item $\theta(Z)=1$, and $Z$ exits or enters (at choice) $T^{2n}$
 transversely through $\partial T^{2n}$;
\item $\lambda$ is, for every real $c>c_n$, a positive $(-c\theta)$-Liouville form,
whose $(-c\theta)$-dual vector field is positively colinear to $Z$;
\item $\lambda$ restricts on $\partial T^{2n}$ to an overtwisted contact form.
\end{enumerate}
\end{lem}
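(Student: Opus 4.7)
\emph{Plan.} The plan is to first construct an explicit model $(\lambda_0, Z)$ on $T^{2n}$ satisfying clauses (1) and (2) with boundary restriction merely contact, and then to modify near $\partial T^{2n}$ in order to obtain the overtwistedness of clause (3).

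Using coordinates $t \in \S^1$ and $(x_1, y_1, \ldots, x_{n-1}, y_{n-1}, z) \in \D^{2n-1}$, I would set $\theta = dt$ and
\[
\lambda_0 := dz + \tfrac{1}{2}\sum_{i=1}^{n-1}(x_i\,dy_i - y_i\,dx_i) - z\,dt, \qquad Z := \partial_t + \tfrac{1}{2}\sum_{i=1}^{n-1}(x_i\partial_{x_i} + y_i\partial_{y_i}) + z\,\partial_z.
\]
A direct computation from $d\lambda_0 = \sum dx_i\wedge dy_i - dz\wedge dt$ gives $\lambda_0(Z) = 0$ and $\iota_Z d\lambda_0 = \lambda_0$, so $Z$ positively spans the characteristic foliation of the even contact structure $\ker\lambda_0$, with $\chi_{\lambda_0} \equiv 1$; clearly $\theta(Z) = 1$. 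At a boundary point $\sum(x_i^2 + y_i^2) + z^2 = 1$, the Euclidean inner product of the $\D^{2n-1}$-part of $Z$ with the outward radial equals $\tfrac{1}{2}(1-z^2) + z^2 = \tfrac{1}{2}(1+z^2) > 0$, so $Z$ exits transversely. Lemma \ref{derivate_lem}(ii) then yields that $\lambda_0$ is a positive $(-c\theta)$-Liouville form with $(-c\theta)$-dual $(c+1)^{-1}Z$ for every $c > -1$, establishing clauses (1) and (2) with $c_n = -1$; swapping the sign of $z$ yields the entering variant.

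For clause (3), transversality of $Z$ to $\partial T^{2n}$ already ensures $\lambda_0|_{\partial T^{2n}}$ is contact, but not a priori overtwisted; for $n = 1$ it is overtwisted by the stated convention. For $n \geq 2$, I would apply Lemma \ref{even_contact_lem}(ii) on a collar of the boundary: prescribe an even-contact collar model $\varepsilon_A$ whose restriction to $\partial T^{2n}$ is an overtwisted contact form (existing in the almost contact class of $\ker(\lambda_0|_{\partial T^{2n}})$ by the Borman--Eliashberg--Murphy h-principle, applied to the closed $(2n-1)$-manifold $\S^1\times\S^{2n-2}$) and whose characteristic direction is transverse to $\partial T^{2n}$ with unit $dt$-component. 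Extend via Lemma \ref{even_contact_lem}(ii) to an even contact structure on $T^{2n}$ agreeing with $\varepsilon_A$ near the boundary and with $\ker\lambda_0$ elsewhere; a defining form $\lambda$ for this structure has overtwisted boundary restriction while its $\chi_\lambda$ remains bounded below on the compact $T^{2n}$, and the bound determines the final $c_n$.

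\emph{Main obstacle.} The delicate point lies in the collar prescription $\varepsilon_A$. The naive symplectization $\lambda_A = e^r\alpha_{OT}$ has characteristic direction $\partial_r$, whose $dt$-component vanishes, so it cannot be patched to the interior model where $\theta(Z) = 1$. One must tilt $\lambda_A$ by a $dt$-correction analogous to the $-z\,dt$ term of the interior model, balancing transversality of the characteristic direction to $\partial T^{2n}$ against the constraint $\theta(Z_A) = 1$ and against a uniform lower bound on $\chi_{\lambda_A}$; controlling the latter bound is what ultimately pins down the admissible constant $c_n$.
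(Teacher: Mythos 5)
Your overall strategy coincides with the paper's: produce an overtwisted contact structure on $\partial T^{2n}$ in the formal class determined by a nondegenerate $2$-form and a transverse vector field (Borman--Eliashberg--Murphy), extend it to an even contact structure on the whole solid torus whose characteristic foliation is positively transverse to the disks $t\times\D^{2n-1}$ via Lemma \ref{even_contact_lem}(ii) applied to the slice foliation, take any defining $1$-form $\lambda$, and let $c_n=\max_{T^{2n}}(-\chi_\lambda)$; clauses (1)--(3) then follow from Lemma \ref{derivate_lem}(ii). Your explicit interior model $\lambda_0$ is a pleasant concretisation but is not needed (the paper only uses the existence of \emph{some} nondegenerate $2$-form on the parallelizable torus to fix the formal class), and the ``main obstacle'' you flag dissolves once the collar model is built the right way: rather than symplectizing $\alpha_{OT}$ in a radial collar and then trying to tilt it, first fix a vector field $Z$ on a collar $A$ with $\theta(Z)=1$, exiting or entering transversely at choice, and define $\varepsilon_A$ as the pullback of $\xi_{OT}=\ker\alpha_{OT}$ under the projection $A\to\partial T^{2n}$ along the flow lines of $Z$. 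By construction the characteristic foliation of $\varepsilon_A$ is spanned by this $Z$, so no balancing against transversality or against $\chi$ is required; and no ``control'' of the lower bound of $\chi_\lambda$ is needed either, since $c_n$ is simply read off a posteriori by compactness (consistently with Remark \ref{positive_note}, which forces $c_n\ge 0$ for $n>1$ once the boundary is overtwisted, so your provisional value $-1$ cannot survive the modification).

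Two smaller points. First, Lemma \ref{even_contact_lem}(ii) lets the global $\varepsilon$ agree with $\varepsilon_A$ near the closed set $A$ but gives no control elsewhere, so you cannot ask the extension to equal $\ker\lambda_0$ away from the boundary unless you also put the deep interior into $A$ (which is possible here but unnecessary, since nothing in the statement requires it). Second, ``swapping the sign of $z$'' does not produce the entering variant of your model: conjugating by $z\mapsto-z$ leaves $Z$ unchanged, while merely negating the $z\partial_z$ term gives a disk component whose inner product with the outward radial is $\tfrac12-\tfrac32z^2$, which changes sign on $\partial\D^{2n-1}$, so transversality fails. A correct entering model is $Z=\partial_t-\tfrac12\sum(x_i\partial_{x_i}+y_i\partial_{y_i})-z\partial_z$ together with $\lambda_0=dz+\tfrac12\sum(x_i\,dy_i-y_i\,dx_i)+z\,dt$, for which $\chi_{\lambda_0}=-1$ and the provisional constant is $+1$ rather than $-1$. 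Neither point is fatal, but the collar construction should be written out as above rather than left as an acknowledged obstacle.
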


\begin{proof} Fix a compact
collar neighborhood $A$
 of the boundary $\partial T^{2n}$ in $T^{2n}$; and a
 vector field $Z$ on $A$ such that
 $\theta(Z)=1$, and which exits or
enters $T^{2n}$ transversely through $\partial T^{2n}$, at choice.
The parallelizable solid torus
  $T^{2n}$ bears a 
positive nondegenerate $2$-form $\omega$.
Consider on $\partial T^{2n}$
 the induced almost contact structure $(\iota_Z(\omega)\vert_{\partial T^{2n}},
\omega\vert_{\partial T^{2n}})$ (Remark \ref{lwctctandnd}).
After the h-principle for overtwisted contact structures
\cite{borman_eliashberg_murphy_15},
  $\partial T^{2n}$ admits
 an overtwisted contact structure $\xi$ in the same formal homotopy
 class as $(\iota_Z(\omega)\vert_{\partial T^{2n}},
\omega\vert_{\partial T^{2n}})$.

Shrinking $A$ if necessary, let  $\varepsilon$ be the even contact structure on $A$
pullback of $\xi$ under the projection $A\to\partial T^{2n}$
along the flow lines of $Z$. 
Applying Lemma \ref{even_contact_lem} to the slice foliation of $T^{2n}$
 by the disks $t\times\D^{2n-1}$ ($t\in\S^1$), to $A$, and to $\omega$,
extend $\varepsilon$ to a global even contact
structure on $T^{2n}$, still called $\varepsilon$, whose characteristic foliation $\ZZ$ is
transverse to the disks,
and spanned
by  $Z$ on $A$. Extend $Z$ to a global vector field on $T^{2n}$,
 still called $Z$, spanning $\ZZ$, and
such that $\theta(Z)=1$.

Choose any $1$-form $\lambda$ on $T^{2n}$
 representing $\varepsilon$.
Let $c_n$ be the maximum of the function $-\chi_{\lambda}$ on $T^{2n}$.
For $c>c_n$, one has $(-c\theta)Z<\chi_{\lambda}$.
After Lemma \ref{derivate_lem} (ii), the $1$-form $\lambda$ is
 $(-c\theta)$-Liouville positive on $T^{2n}$, and its  $(-c\theta)$-dual vector field is
 positively colinear to $Z$.
\end{proof}

\begin{rmk}\label{positive_note} For $n > 1$,
 the constant $c_n$ given by Lemma \ref{toric_lem} is necessarily non-negative.
 Indeed, assume by contradiction that $c_n < 0$. Fix a negative $c>c_n$.
 Then, after properties (1) and (2) and after the proof of Lemma \ref{derivate_lem}, (i),
 the function $$d\lambda(Z_\lambda, R)=1-c\theta(Z_\lambda)$$ is $>1$ on $T^{2n}$. After Equation (\ref{chi_eqn}),
 the function $\chi_\lambda$ is positive on $T^{2n}$.
  So, $\lambda$ is a positive genuinely Liouville form on $T^{2n}$,
  whose corresponding Liouville vector field is positively colinear to $Z$.
  In particular, $Z$ has to exit the solid torus; and the genuinely symplectic form $d\lambda$
  fills the overtwisted contact structure $\xi$: a contradiction. 
  
  Consequently, Lemma \ref{toric_lem} lacks some symmetry: it
   would not hold if one changed
   $-c\theta$ to $c\theta$ in property (2). Equivalently,
    one cannot change $\theta (Z)=1$ to
  $\theta (Z)=-1$ in property (1). This lack of symmetry is
  linked to the unability of our method to
  deform foliations to contact structures on \emph{closed} manifolds, see Remark
  \ref{limitation_rmk}.
\end{rmk}

\begin{rmk}\label{ndhc} In Lemma \ref{toric_lem},
there is no need to explicitly prescribe the homotopy class of $d_\eta\lambda$
 among the positive nondegenerate $2$-forms on $T^{2n}$; indeed, there is only one such class,
 $\rm{SO}(2n)/\rm{U}(n)$ being simply connected.
\end{rmk}

\begin{proof}[Proof of Theorem $A$] 
 The closed $1$-form $\eta$ being not exact, $M$ contains
 two disjoint, embedded circles on which the integral of $\eta$
is less than minus the constant $c_n$ of Lemma \ref{toric_lem}.
 Thicken them into two disjoint ($2n$)-dim\-ens\-ional compact solid tori  $T_-, T_+\subset M$.
After Remark \ref{cc_rmk}, without loss of generality, we can
 change $\eta$ on $M$ to any cohomologous
closed $1$-form; in particular, we can arrange that
 on $T_\pm
\cong\S^1\times\D^{2n-1}$, the form $\eta$ is proportional
to $\theta$.

One endows $M$ with the orientation defined by $\omega^n$.
The lemma  \ref{toric_lem} provides, on $T_-$ (resp. $T_+$), a positive
$\eta$-Liouville form $\lambda_-$ (resp. $\lambda_+$) whose $\eta$-dual vector field $Z_-$
(resp. $Z_+$) enters (resp. exits) the torus transversely through its boundary,
on which the contact form  $\lambda_-\vert_{\partial T_-}$ 
(resp. $\lambda_+\vert_{\partial T_+}$)
 is overtwisted.
 
  We extend $\lambda_-$ and $Z_-$ (resp. $\lambda_+$
and $Z_+$) to a small open neighborhood of
$T_-$ (resp. $T_+$) in $W$.
 
 After Remark \ref{ndhc},
 $d_\eta\lambda_-$    (resp. $d_\eta\lambda_+$)  is homotopic to  $\omega$
 over $T_-$ (resp. $T_+$)
 as a nondegenerate $2$-form. After a homotopy of $\omega$,
 we arrange without loss of generality that $\omega=d_\eta\lambda_-$    (resp. $d_\eta\lambda_+$) over some neighborhood of $T_-$ (resp. $T_+$).

 On the complement $W=M\setminus Int(T_-\cup T_+)$, fix a Morse function $f$
 such that $f\mun(0)=\partial T_-$ and $f\mun(1)=\partial T_+$, and without local extrema
 in the interior of $W$.  Choose a descending pseudo-gradient $Z$ for $f$ on $W$,
 coinciding with $Z_-$ (resp. $Z_+$) close to $\partial T_-$
 (resp. $\partial T_+$).
 
  Every critical point $c$ of $f$ of index $1\le i\le 2n-1$
   admits in $W$ a small compact 
  neighborhood $H_c$  with (convex) cornered boundary, as follows. 
   $H_c$ is diffeomorphic to $\D^i\times\D^{2n-i}$ minus a
  small open
  tubular neighborhood of the corner $\S^{i-1}\times\S^{2n-i-1}$; and the boundary
  splits as
  $$\partial H_c=\partial_+H_c\cup\partial_0H_c\cup\partial_-H_c$$
 where 
  \begin{itemize} 
 \item
 $f$ is constant on $\partial_+H_c$ and on $\partial_-H_c$;
 \item $Z$ enters (resp. exits) $H_a$  transversely through $\partial_+H_c$
 (resp. $\partial_-H_c$);
 \item $Z$ is tangential to the $\I$ factor on
  $\partial_0H_c\cong\S^{i-1}
 \times\S^{2n-i-1}\times\I$.
 \end{itemize}
 
 Since $H_c$ is simply connected, we can
 without loss of generality arrange that $\eta=0$ on $\nb_W(H_c)$ (Remark \ref{cc_rmk}). Hence, in $H_c$, we actually look for a \emph{genuine} Liouville form.
  After the symplectization of cobordisms
  \cite{eliashberg_murphy_20},
   there is a positive Liouville form $\lambda_c$ on $\nb_W(H_c)$ whose
 dual Liouville vector field is positively colinear to
  $Z$ on $\nb_{W}(\partial H_c)$; and such that
 $\lambda_c$ restricts to an overtwisted
 contact structure on every connected component of $\partial_\pm H_c$.
 
Put for short $H=\cup_cH_c$, where $c$ runs over the critical points of $f$.
 After Lemma \ref{even_contact_lem} (ii) applied
in $W'=W\setminus Int(H)$ foliated by the level hypersurfaces of $f$, and
 $A=\partial W'$,
there is an even contact structure $\varepsilon$ on  $W'$ such that
\begin{itemize}
\item  The almost symplectic class associated with $\varepsilon$ contains $\omega$;
\item The characteristic
foliation $\ZZ$ of $\varepsilon$ is transversal to the level hypersurfaces of $f$;
\item $\varepsilon$ coincides respectively
 with the kernels
of $\lambda_-$, $\lambda_+$ and $\lambda_c$ on neighborhoods of
 $\partial T_-$, $\partial T_+$
and $\partial H_c$, for every critical point $c$.
\end{itemize}
 Changing the pseudo-gradient
 $Z$ in $Int(W')$,
one moreover arranges that $Z$ spans $\ZZ$ positively on $W'$.

By means of a partition of the unity, make a 1-form $\lambda$ on $M$
representing $\varepsilon$ on $W'$, matching $\lambda_+$
on $\nb_W(T_+)$, matching $\lambda_-$ on $\nb_W(T_-)$; and,
 for each critical point $c$, matching $\lambda_c$
on $\nb_W(H_c)$.

In particular, $\lambda$ is $\eta$-Liouville
 on some small open neighborhood $V$ of $T_-\cup T_+\cup H$
in $M$.

 \medbreak
 Claim 1: there is a smooth real function $g$ on $M$,
  locally constant on $T_-$, $T_+$ and $H$, such that
  $Z\cdot g<0$ on $Int(W')$.
  
Indeed, such a function $g$ will be obtained from $f$
 by a modification in arbitrarily small neighborhoods of
 $T_-$, $T_+$ and $H$. The modification is obvious close to $T_-$ and $T_+$.
 Now, consider a critical point $c$.  Write $t_-=f(\partial_-H_c)$
 and $t_+=f(\partial_+H_c)$.
 On a small enough open neighborhood $\Omega$ of $H_c$ in $W$,
one easily builds a smooth plateau function $\phi:
 \Omega\to[0,1]$ such that
 \begin{itemize}
 \item $\phi$ is compactly supported in $\Omega$, while
 $\phi\mun(1)=H_c$;
 \item $Z\cdot\phi(x)\ge 0$ (resp. $=0$) (resp. $\le 0$)
  at every point $x$ of $\Omega$ such that $f(x)\ge t_+$ (resp. $t_-\le f(x)\le t_+$)
  (resp. $f(x)\le t_-$).
 \end{itemize}
  Then, $g=(1-\phi)f+\phi f(c)$ works on $\Omega$. The claim 1 is proved.

  \medbreak
 After multiplying $g$ by a large enough positive constant, one arranges moreover
  that $Z\cdot g$
 is less than
  $\chi_\lambda-\eta Z$ on $M\setminus V$.
   \medbreak
Claim 2: the $1$-form $\mu=e^{-g}\lambda$
  is $\eta$-Liouville  on $M$.
  
This holds of course
 on $T_-$, on $T_+$ and on each $H_c$, since on these domains,
  $\mu$ is a {constant} multiple
 of $\lambda_-$, $\lambda_+$ and $\lambda_c$, respectively.
 On $W'$, in view of Lemma \ref{derivate_lem}, there remains to verify that
 $\chi_\mu=\chi_\lambda-Z\cdot g$ is more than $\eta Z$. But this inequality does hold on $M\setminus V$
 by choice of $g$; while
on $W'\cap V$, the function $\chi_\lambda-Z\cdot g$ is not less that $\chi_\lambda$,
which is more than 
   $\eta Z$
by Lemma \ref{derivate_lem}. The claim 2 is proved.
 \medbreak
 By construction, $d_\eta\mu$ lies over $W'$ in the almost symplectic class associated with $\varepsilon$. Globally, $d_\eta\mu$ is  homotopic to
 $\omega$ among the nondegenerate $2$-forms on $M$.

 Finally, in order to obtain a $\eta$-symplectic form $\omega'$ in any prescribed
  cohomology class $a \in H^2_{\eta}(M)$: first, fix an arbitrary $(d_\eta)$-closed $2$-form $\varpi$ on $M$ representing $a$.
 Second, define $$\omega' =  \varpi+K d_\eta\mu$$ for some large positive real constant $K$.
  Provided that $K$ is large enough,
  $\omega'$ is nondegenerate;
  and homotopic to $d_\eta\mu$, among the nondegenerate $2$-forms,
   through the homotopy $(1-t)\varpi+K d_\eta\mu$ ($t\in\I$) followed by the homotopy $
   ((1-t)K +t)d_\eta\mu$ ($t\in\I$).

\end{proof}

Theorem $A$ admits the following (easy) generalization, allowing a
smooth boundary
for the ambient manifold, and prescribing a natural boundary condition.
Let $M$ be
an oriented compact ($2n$)-manifold whose nonempty smooth boundary is splitted into two disjoint
compact subsets $\partial_\pm M$, each of which may be empty.

\begin{thm}\label{general_thm}
Assume that one is given on $M$
\begin{itemize}
\item A {nonexact} closed $1$-form  $\eta$;
\item A relative Novikov cohomology class $a\in H^2_\eta(M,\partial M)$;
\item A positive nondegenerate $2$-form $\omega$.

  \end{itemize}
  
 Then, there exist $\varpi\in\Omega^2(M)$ and $\alpha\in\Omega^1(\partial M)$ such that
 \begin{itemize}
 \item The pair $(\varpi,\alpha)$ is $D_\eta$-closed and
  represents the cohomology class $a$
 (recall Equation \ref{relative_eqn});
  \item  $\varpi$ is nondegenerate, and
  homotopic
  to $\omega$ among the nondegenerate $2$-forms on $M$;
 \item $\alpha$ is an overtwisted contact form on every
 connected component of $\partial M$, positive on $\partial_-M$
 and negative on $\partial_+M$. 
  \end{itemize}

\end{thm}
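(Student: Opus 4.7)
The plan is to adapt the proof of Theorem $A$ almost verbatim: where Theorem $A$ installs two auxiliary solid tori $T_\pm\subset M$ to host ``cap'' and ``cup'' $\eta$-Liouville models, I will instead use collar neighborhoods of the already-given boundary components $\partial_\pm M$. The only new ingredient will be a linear correction at the end to fix the prescribed relative cohomology class $a\in H^2_\eta(M,\partial M)$.

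First I would build $\eta$-Liouville forms $\lambda_\pm$ near $\partial_\pm M$. Fix collars $\partial_\pm M\times[0,\epsilon)$ with coordinate $t$, and a vector field $Z_\pm$ proportional to $\partial/\partial t$, pointing into $M$ at $\partial_- M$ and out of $M$ at $\partial_+ M$. Applying Lemma~\ref{even_contact_lem}(i) to the slice foliation, the ambient form $\omega$, and the field $Z_\pm$ produces an even contact structure on each collar whose characteristic foliation is positively transverse to the slices and whose trace on each slice---in particular on $\partial_\pm M$ itself---is overtwisted. Picking a defining $1$-form and multiplying it by $e^{Ct}$ for $C$ large enough so that $\chi_\lambda>\eta(Z)$ along the collar, Lemma~\ref{derivate_lem}(ii) yields an $\eta$-Liouville form $\lambda_\pm$ with dual positively colinear to the characteristic direction, hence transverse to the slices in the correct way. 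After a homotopy of $\omega$ near $\partial M$ (Remark~\ref{ndhc}), I may assume $\omega=d_\eta\lambda_\pm$ on $\nb(\partial M)$.

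Next, the interior is handled exactly as in Theorem $A$. Choose a Morse function $f:M\to\I$ without interior local extrema, with $\partial_\pm M=f^{-1}(0,1)$; a descending pseudo-gradient $Z$ extending $Z_\pm$; and a compact handle $H_c$ around each critical point $c$ of index $1\le i\le 2n-1$. On each $H_c$, gauge $\eta$ to zero (Remark~\ref{cc_rmk}) and apply the Eliashberg--Murphy symplectization of cobordisms \cite{eliashberg_murphy_20} to produce a local Liouville form $\lambda_c$ whose dual equals $Z$ near $\partial H_c$ and which restricts to an overtwisted contact form on each face of $\partial H_c$. Lemma~\ref{even_contact_lem}(ii), applied to the complement of the handles foliated by the level sets of $f$, extends the kernels of the $\lambda_\pm$ and $\lambda_c$'s to a global even contact structure. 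A partition of unity assembles these into a single $1$-form $\lambda$, and the rescaling $\mu=e^{-g}\lambda$, where $g$ is locally constant on $\nb(\partial M)\cup\bigcup_c H_c$ and has $Z\cdot g$ sufficiently negative in between, upgrades $\lambda$ to a globally $\eta$-Liouville form $\mu$ that agrees with positive constant multiples of $\lambda_\pm$ on $\nb(\partial M)$.

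Finally, to prescribe the class $a$, fix any $D_\eta$-closed representative $(\varpi_0,\alpha_0)$ of $a$ and set, for $K\gg 0$,
\begin{equation*}
\varpi=\varpi_0+K\,d_\eta\mu,\qquad \alpha=\alpha_0+K\,\mu|_{\partial M}.
\end{equation*}
Then $(\varpi,\alpha)-(\varpi_0,\alpha_0)=D_\eta(K\mu,0)$ is $D_\eta$-exact, so $(\varpi,\alpha)$ is still $D_\eta$-closed and represents $a$. For $K$ large, $\varpi$ is nondegenerate and, through the same two-step linear homotopy as at the end of Theorem $A$, homotopic to $\omega$ among nondegenerate $2$-forms; and $\alpha/K$ is a $C^\infty$-small perturbation of $\mu|_{\partial M}$, so $\alpha$ is an overtwisted contact form of the required sign on every component of $\partial M$. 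The main subtlety beyond routine transcription of Theorem $A$'s argument is the sign bookkeeping in the first step: by Lemma~\ref{hypersurface_lem}(iii), the ``positive/negative'' nature of the induced boundary contact form is governed by the direction of transversality of the $\eta$-Liouville dual, so $Z_\pm$ must be chosen to enter $M$ at $\partial_-M$ (for positive $\alpha$) and exit $M$ at $\partial_+M$ (for negative $\alpha$), as above.
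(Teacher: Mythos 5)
Your construction of the boundary Liouville collars, the handle-by-handle use of the Eliashberg--Murphy symplectization, the even-contact interpolation, the rescaling $\mu=e^{-g}\lambda$, and the final linear correction by $K$ all match the paper's argument. But there is one genuine gap: you have dropped the two interior solid tori $T_\pm$ of Theorem $A$, and the paper keeps them for a reason. The theorem allows $\partial_-M$ or $\partial_+M$ to be empty (only $\partial M$ itself is assumed nonempty). If, say, $\partial_-M=\emptyset$, then no Morse function $f:M\to\I$ with $f\mun(0)=\partial_-M$, $f\mun(1)=\partial_+M$ and no interior local extrema exists: $f$ attains its minimum at an interior point. An interior local extremum is fatal to the scheme, because its handle is a ball on which $\eta$ is exact, and a \emph{genuine} Liouville vector field cannot enter (resp.\ exit) a compact ball through its entire boundary --- in the exiting case one would moreover obtain a genuine Liouville filling of an overtwisted sphere. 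The tori $T_\pm$, carrying the $(-c\theta)$-Liouville forms of Lemma \ref{toric_lem}, are precisely the substitute ``source'' and ``sink'' that make the cobordism $W=M\setminus \mathrm{Int}(T_-\cup T_+)$ admit a Morse function with $f\mun(0)=\partial T_-\cup\partial_-M$, $f\mun(1)=\partial T_+\cup\partial_+M$ and no interior extrema in all cases.

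A symptom of the same gap: your argument never uses the hypothesis that $\eta$ is non-exact, whereas the statement is false without it (for $\eta=0$ and $\partial_+M=\emptyset$ it would produce a genuine symplectic domain with overtwisted convex contact boundary, contradicting the Gromov--Eliashberg/Niederkr\"uger tightness of fillable contact structures). In the paper the non-exactness enters exactly and only through the choice of the cores of $T_\pm$, on which $\int\eta<-c_n$. When both $\partial_-M$ and $\partial_+M$ are nonempty your streamlined version does go through (the cobordism then has a nonempty concave end and Eliashberg--Murphy applies directly, with no need for conformality), but to prove the theorem as stated you must reinstate $T_\pm$ and run the Morse function on their complement.
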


In particular, $\varpi$ is $\eta$-symplectic, and
 $\partial_+ M$ (resp. $\partial_-M$) is of concave (resp. convex)
  overtwisted contact type (Definition \ref{twisted_dfn_3} and Lemma \ref{hypersurface_lem})
  with respect to $\varpi$. 
(Our choice of signs, seemingly unnatural, is coherent with the pseudogradients
being descendant in section \ref{morse_sec}).

\begin{proof}[Proof of Theorem \ref{general_thm}] As in the proof of Theorem $A$,
one chooses two disjoint solid tori $T_\pm$ embedded in the interior of $M$,
on the cores of which the integral of $\eta$
is less than minus the constant $c_n$ of Lemma \ref{toric_lem}.
Then, $T_-$ (resp. $T_+$) bears a positive
$\eta$-Liouville form $\lambda_-$ (resp. $\lambda_+$)
 whose $\eta$-dual vector field
  enters (resp. exits) the torus transversely through its boundary,
on which the contact form  $\lambda_-\vert_{\partial T_-}$
 (resp. $\lambda_+\vert_{\partial T_+}$)
 is overtwisted.

 Fix on $\nb(\partial M)$ a vector field $Z$ transverse to $\partial M$,
entering (resp. exiting) $M$ through $\partial_+M$ (resp. $\partial_-M$).
The h-principle for overtwisted contact structures
\cite{borman_eliashberg_murphy_15} provides on $\partial M$
an overtwisted contact form $\beta$ in the same almost contact class
as $(\iota_Z(\omega)\vert_{\partial M},\omega\vert_{\partial M})$
(Definition \ref{ac_dfn} and Remark \ref{lwctctandnd}).
 By means of Lemma  \ref{derivate_lem},
extend $\beta$  over $\nb(\partial M)$ to a $\eta$-Liouville form
 $\lambda$,  $\eta$-dual to $Z$.

 On the complement $W=M\setminus Int(T_-\cup T_+)$, fix a Morse function $f:W\to [0,1]$
 such that $$f\mun(0)=\partial T_-\cup\partial_-M$$ $$f\mun(1)=\partial T_+
 \cup\partial_+M$$ and without local extrema
 in the interior of $W$. The same construction as in  the proof of Theorem $A$
 yields on $M$ a $\eta$-Liouville form $\mu$ which is  on $\nb(\partial M)$
 a positive locally constant multiple of $\lambda$;
 and $d_\eta\mu$ is homotopic to $\omega$
 as a nondegenerate $2$-form on $M$.

The relative Novikov cohomology class $a$ is represented by a
pair $(\omega',0)\in\Omega^2(M)\times\Omega^1(\partial M)$
such that $d_\eta\omega'=0$. 
  For a large enough positive real $K$, the pair $(\varpi,\alpha)$ obviously works, where 
   $$\varpi=\omega'+Kd_\eta\mu$$
   $$\alpha=K\mu\vert_{\partial M}$$
\end{proof}

\section{Morse theory for codimension-$1$ taut foliations}\label{morse_sec}

 See e.g. \cite{candel_00} for the elements on foliations. 
 Given a taut codimension-$1$ foliation,
the existence of functions which are Morse in restriction to every leaf
 is classical
\cite{ferry_wasserman_86}.
 The leafwise pseudo-gradients and their dynamics
appeared  in  \cite{bertelson_02}, for foliations of arbitrary codimensions,
in order to construct some leafwise geometric structures; and in
\cite{gay_18},
in order to study the contact forms carried by open book decompositions on $3$-manifolds.
 Apart from
these works, the
``Morse theory of foliations'' seems not to have
 met the attention that it deserves.

 In the present section, we
elaborate the tools
that we need for Section \ref{cobordism_sec}:
essentially,
 the construction of \emph{ordered} leafwise Morse functions
  (Definition \ref{ordered_dfn} below),
 and the cancellation of leafwise local extrema.

\medbreak
 In this section, we consider
  a compact manifold $M$ of dimension $m\ge 2$,
  maybe with a smooth boundary $\partial M$,
endowed with a cod\-im\-ension-$1$ foliation $\FF$, coorientable
to fix ideas, and transverse to $\partial M$.

For a smooth real function $f$ on $M$, a point $c\in M$ is \emph{leafwise critical}
if  the differential of $f$ vanishes on the leaf $L_c$ of $\FF$ through $c$.
The \emph{critical locus} $\crit(f,\FF)\subset M$ is the set of the leafwise critical points.

\begin{dfn}\label{lm_dfn} We call $f$ \emph{leafwise Morse} if
 $f$ restricts to a Morse function on every leaf of $\FF$,
and if $f$ is locally constant on $\partial M$. In particular, the critical locus is interior to $M$;
while $\partial M$ splits as the disjoint union of local minima $\partial_-(M,f)$
and of local maxima $\partial_+(M,f)$.
\end{dfn}

Informally, a leafwise Morse function locally amounts
to a $1$-parameter family of Morse functions in dimension $m-1$.

At every $c\in \crit(f,\FF)$, the Morse index $\ind_c(f,\FF)$ of $f\vert_{L_c}$ lies between $0$
and $m-1$.
Clearly, $\crit(f,\FF)$ is a disjoint union of circles  transverse to $\FF$,
and the index is constant on each circle.
One denotes by $\crit^i(f,\FF)$ the set of the index-$i$ leafwise critical points.
 Of course, in general $f$ is not locally constant on $\crit(f,\FF)$.
 Recall that
 
 \begin{dfn}\label{taut_dfn}
 $\FF$ is called \emph{taut} if every leaf meets a transverse loop.
 \end{dfn}
If $\F$ admits a leafwise Morse function, then
 $\FF$ must be taut, since every minimal set has to meet the index-$0$ critical locus
  and the index-($m-1$) critical locus. Conversely:
\begin{pro}[Ferry-Wasserman \cite{ferry_wasserman_86}]
\label{morse_pro} Let $M$ be a compact manifold with smooth boundary,
splitted into two compact subsets $\partial_\pm M$ (one of which may be empty,
or both).
Let $\F$ be on $M$ a coorientable, taut codimension-$1$ foliation,
transverse to $\partial M$.

Then, $\F$
admits a leafwise Morse function $f$ such that $\partial_-(M,f)=\partial_- M$
and $\partial_+(M,f)=\partial_+ M$.

\end{pro}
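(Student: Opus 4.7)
\emph{Plan.} I would build $f$ in two stages: a collar construction handling the prescribed boundary behaviour, then a generic construction in the interior, glued and perturbed to achieve leafwise nondegeneracy.

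\emph{Collar.} Transversality of $\F$ to $\partial M$ yields a collar $V\cong\partial M\times[0,2\epsilon)$ in which $\F$ is the product foliation $\partial\F\times[0,2\epsilon)$. Set $f_0(x,s):=s$ on a neighborhood of $\partial_-M$ and $f_0(x,s):=C-s$ on a neighborhood of $\partial_+M$, with $C$ a constant to be fixed large enough later. The function $f_0$ is locally constant on $\partial M$ with the prescribed labelling, has no interior leafwise critical points inside $V$, and meets the boundary of every leaf transversely; its leafwise local extrema in $V$ lie exactly on $L\cap\partial_\pm M$.

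\emph{Interior and gluing.} By compactness, cover $M\setminus(\partial M\times[0,\epsilon])$ by finitely many distinguished foliation charts $U_\alpha\cong \D^{m-1}\times J_\alpha$, in which $\F$ restricts to the horizontal foliation by plaques $\D^{m-1}\times\{t\}$. On each chart, pick a Morse function $h_\alpha\colon\D^{m-1}\to\R$ of oscillation bounded by $C/2$ and extend trivially in the transverse direction, $\tilde h_\alpha(x,t):=h_\alpha(x)$; this is leafwise Morse on $U_\alpha$. Combine via a partition of unity $\{\rho_0,\rho_\alpha\}$ subordinate to $\{V\}\cup\{U_\alpha\}$, with $\rho_0$ dominating near $\partial M$:
\[
f:=\rho_0 f_0+\sum_\alpha \rho_\alpha\tilde h_\alpha.
\]
Then $f$ has the prescribed boundary behaviour, but the partition-of-unity gluing may produce degenerate leafwise critical points in the interior.

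\emph{Perturbation and main obstacle.} Remedy the degeneracies by an interior-supported $C^2$-small perturbation $f\leadsto f+g$. The set of admissible $g\in C^\infty_c(\mathrm{Int}\,M)$ for which the leafwise $1$-jet of $f+g$ is transverse to the zero section of $T^*\F$ is residual, by a foliated jet-transversality argument; since leafwise nondegeneracy is open and the collar structure is preserved when $\|g\|_{C^2}$ is small enough, $f+g$ is the desired leafwise Morse function. The main technical obstacle is precisely this transversality claim in the foliated setting: chart by chart, one exhibits a finite-dimensional family of perturbations of the form $g=\sum c_i\phi_i(x)\psi_i(t)$ whose leafwise differentials span $T^*\F$ pointwise on compact sub-chart pieces, so that Sard's theorem furnishes a generic parameter $(c_i)$ making $f+g$ leafwise Morse there; an induction over the finite cover, exploiting the openness of the Morse condition to avoid undoing previous corrections, completes the construction. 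Tautness is used implicitly to ensure the procedure is globally consistent: a closed transversal through every leaf guarantees that no leaf is a tangential accumulation of the critical locus, so the local perturbations patch into a function that is Morse on \emph{every} leaf simultaneously, and not merely on an open-dense family.
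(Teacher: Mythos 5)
Note first that the paper does not prove this proposition: it is quoted from Ferry--Wasserman and used as a black box. So your proposal has to stand on its own, and unfortunately it has a genuine gap at the decisive step. Transversality of the leafwise $1$-jet of $f+g$ to the zero section of $T^*\F$ only guarantees that the leafwise critical locus is a closed $1$-manifold; it does \emph{not} guarantee that the leafwise Hessian is nondegenerate along it. The locus where $d_\F f=0$ \emph{and} the leafwise Hessian degenerates is cut out by $(m-1)+1=m$ equations, so for a generic $f$ it is a nonempty finite set of birth--death points, at which the critical circle is tangent to $\F$. These points are stable under $C^\infty$-small perturbation (exactly as cubic singularities are unavoidable in a generic $1$-parameter family of functions, which is what a leafwise Morse problem locally is). Hence ``leafwise Morse'' is an open but \emph{not} dense condition, and no jet-transversality or Sard-type argument can produce it; the best genericity gives is a function that is Morse on all leaves except the finitely many carrying a birth--death point.

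This is also why your treatment of tautness cannot be repaired by a parenthetical remark: the hypothesis must be used to \emph{remove} the birth--death points, which is a global cancellation problem, not a denseness issue. A concrete sanity check: run your argument on the Reeb foliation of $\S^3$ (no boundary, so the collar stage is vacuous). It would output a leafwise Morse function, yet none exists --- the critical locus of a leafwise Morse function is a disjoint union of closed transversals, and the compact torus leaf of the Reeb foliation meets no closed transversal. Since your argument never invokes tautness in a way that would break down for the Reeb foliation, it proves too much. The actual Ferry--Wasserman construction has to exploit tautness essentially (closed transversals through every leaf seed the index-$0$ and index-$(m-1)$ critical circles, and the remaining birth--death singularities of a generic extension are then cancelled in pairs); that elimination step is the real content of the proposition and is missing from your proposal.
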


\subsection{Ordering a leafwise Morse function}\label{order_ssec}
Let $M$ be as before a compact manifold of dimension $m \ge 2$ with smooth boundary (maybe empty), endowed with a coorientable cod\-im\-ension-$1$, taut foliation $\F$ transverse to $\partial M$. 
Let $f$ be a leafwise Morse function on $M$.

\begin{dfn}\label{ordered_dfn} The leafwise Morse function
$f$ is \emph{ordered} if
 for every two leafwise critical points $c, c'$, the inequality
 $$\ind_c(f,\F)<\ind_{c'}(f,\F)$$ implies $f(c)<f(c')$.
\end{dfn}

\begin{pro}\label{ordered_pro} The foliation $\F$ admits an \emph{ordered}
 leafwise Morse function which has
 the same critical
locus as $f$, with the same indices, and the same splitting of $\partial M$
into local minima and local maxima.
\end{pro}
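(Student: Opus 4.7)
The plan is to adapt Milnor's classical rearrangement theorem for Morse functions to the leafwise setting. Fix a leafwise pseudo-gradient $Z$ for $f$: a smooth vector field tangent to $\F$, with $Z\cdot f<0$ off $\crit(f,\F)$ and coinciding with the standard descending gradient in a leafwise Morse chart around each $c\in\crit(f,\F)$.

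The heart of the argument is the following foliated rearrangement lemma, which I will prove first: given a critical circle $\gamma$ of index $i$ and a smooth function $a:\gamma\to\R$ sufficiently $C^0$-close to $f\vert_\gamma$, there exists a leafwise Morse function $\tilde f$, coinciding with $f$ outside a prescribed tubular neighborhood of $\gamma$, having the same critical locus, indices, and boundary splitting as $f$, and with $\tilde f\vert_\gamma = a$. Granted this lemma, the proposition follows by iteration: partition an interval containing $f(M)$ and all intended target values into $m$ disjoint consecutive open intervals $J_0<J_1<\cdots<J_{m-1}$, and treat the critical circles one by one. For each $\gamma$ of index $i$, apply the lemma finitely many times (with small $C^0$-steps) to push $f\vert_\gamma$ into $J_i$. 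Since distinct critical circles are disjoint, the successive modifications can be supported in pairwise disjoint tubular neighborhoods, so earlier adjustments are preserved.

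To prove the lemma, use the parametric Morse lemma to obtain a tubular neighborhood $\mathcal O\cong\S^1\times\D^{m-1}$ of $\gamma$ on which $\F$ is the product foliation by disks $\{t\}\times\D^{m-1}$ and $f$ takes the normal form $f(t,x)=f_0(t)+Q_i(x)$, with $f_0=f\vert_\gamma$ and $Q_i$ the standard Morse quadratic form of index $i$. Pick a bump function $\chi:\mathcal O\to[0,1]$ compactly supported, equal to $1$ near $\gamma$, and define
\[
  \tilde f(t,x) \;=\; f(t,x) \;+\; \chi(t,x)\bigl(a(t)-f_0(t)\bigr).
\]
Since $a-f_0$ depends only on $t$ and each leaf is $\{t\}\times\D^{m-1}$, one has $d_\F\tilde f = d_\F f + (a-f_0)\,d_\F\chi$. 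On $\chi^{-1}(1)$ this reduces to $d_\F f$, which vanishes only on $\gamma$ with Hessian of index $i$; on $\mathrm{supp}(d_\F\chi)$, choosing $\|a-f_0\|_{C^0}\cdot\max|d_\F\chi|$ smaller than the positive lower bound of $|d_\F f|$ there prevents spurious leafwise critical points. The pseudo-gradient inequality $Z\cdot\tilde f<0$ off $\gamma$ is preserved by the same smallness hypothesis.

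The main obstacle I anticipate is the bookkeeping of the iteration: one must verify that many small applications of the lemma can be composed to move $f\vert_\gamma$ across any prescribed $C^0$-distance into $J_i$, and that during this process no previously-ordered critical circle is disturbed. The first point is handled by a compactness argument along the traversed path in function space; the second by ensuring that the tubular neighborhoods used at successive steps remain disjoint from all other critical circles. The parametric Morse lemma itself is standard since $\gamma$ is compact, transverse to $\F$, and the leafwise index is constant along $\gamma$. Finally, $\partial_-(M,\tilde f)=\partial_-(M,f)$ and $\partial_+(M,\tilde f)=\partial_+(M,f)$ automatically, since the modifications are supported away from $\partial M$.
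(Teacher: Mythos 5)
There is a genuine gap in your ``foliated rearrangement lemma'', and it is fatal to the whole strategy. A modification of $f$ supported in a tubular neighborhood $\mathcal O$ of a critical circle $\gamma$ of \emph{nonextremal} index $1\le i\le m-2$, which keeps the same critical locus, can only move the critical value within the interval of values taken by $f$ on the boundary of the leafwise slices of $\mathcal O$: following the descending (resp.\ ascending) pseudo-gradient from the critical point along its unstable (resp.\ stable) manifold inside a leaf, the new function must decrease (resp.\ increase) until it exits $\mathcal O$, where it agrees with $f$; hence $\min_{\partial}f<\tilde f(\gamma)<\max_{\partial}f$ on each leafwise slice. Your own smallness condition reflects this quantitatively: one step moves $f\vert_\gamma$ by at most roughly the ratio of the lower bound of $\vert d_\F f\vert$ on the support of $d_\F\chi$ to the upper bound of $\vert d_\F\chi\vert$, which in the Morse chart is of order $R^2$ where $R$ is the radius of the chart; and the iteration does not escape this bound, because after $N$ steps the accumulated perturbation $(\sum_k\delta_k)\,d_\F\chi$ must still be dominated by $dQ_i$ on the annulus where $\chi$ varies, so the \emph{total} displacement remains bounded by the same quantity (or, if you shrink the neighborhoods to recover the normal form at each step, the admissible steps decay geometrically). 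Since the critical values of $f$ may initially be in any order with arbitrarily large gaps, no finite iteration of such local moves can order them.

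What the local picture misses is that reordering critical values requires propagating the modification along entire pseudo-gradient trajectories (as in Milnor's rearrangement theorem), and its feasibility is governed by which critical circles are joined by trajectories. This is precisely where the content of the proposition lies in the foliated setting: one must control the orbit chains of a globally Kupka--Smale leafwise pseudo-gradient --- including the exceptional orbits joining circles of equal index, which cannot in general be eliminated --- in order to produce, for each $i$, a closed hypersurface transverse to $\nabla$ separating $\crit^{\le i-1}(f,\F)\cup\partial_-(M,f)$ from $\crit^{\ge i}(f,\F)\cup\partial_+(M,f)$; this is the paper's Lemma \ref{separation_lem}, proved via the non-Hausdorff orbit space and the monotony of indices along orbit chains. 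The ordered function is then obtained globally as $\phi_1+\dots+\phi_{m-1}+\epsilon f$ for plateau functions $\phi_i$ adapted to these hypersurfaces, not by local surgery near the critical circles. Your proposal bypasses the pseudo-gradient dynamics entirely, which is why it appears to succeed by purely local means; a correct proof along your lines would have to reintroduce exactly the global separation statement.
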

This will result from the generic dynamical properties of the leafwise pseudo-gradients.
 As is usual in Morse theory, one considers \emph{descending} pseudo-gradients.
 
 \begin{dfn}\label{pdg_dfn}
A vector field $\nabla$ on $M$,  tangential to $\F$,
 is a \emph{leafwise pseudo-gradient} for $f$
 if, in every leaf $L$, the restriction $\nabla\vert_L$
is a descending pseudo-gradient for the Morse function $f\vert_F$. In other words:
\begin{itemize}
\item The function $\nabla\cdot f$ is negative but at the leafwise critical points;
\item The Hessian of $\nabla\cdot f$ at every leafwise critical point $c$ is negative definite
in $T_cL_c$.
\end{itemize}
\end{dfn}
 The construction of such a vector field
is straightforward by means of a partition of the unity.
Clearly, $\nabla$ enters $M$ through
$\partial_+(M,f)$ and exits $M$ through $\partial_-(M,f)$.
Write $\nabla^t(x)$ for the image of $x\in M$
 under the flow of $\nabla$ at the time $t\in\R$,
whenever defined.

\begin{lem}\label{length_lem} (i) For every $x\in M$, the orbit $\nabla^t(x)$ descends from a
point $\alpha(x)\in \crit(f,\FF)\cup\partial_+(M,f)$ to a point
  $\omega(x)\in \crit(f,\FF)\cup\partial_-(M,f)$.
  
   (ii) Moreover,
the lengths
of the orbits have an upper bound not depending on $x$.
\end{lem}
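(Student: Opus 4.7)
The plan is to handle (i) and (ii) by standard dynamical-systems arguments carried out leafwise. First I fix $x\in M$ and note that the orbit $\gamma(t)=\nabla^t(x)$ lies in the leaf $L_x$ since $\nabla$ is tangential to $\F$, and that $\nabla$ exits $M$ only through $\partial_-(M,f)$ and enters only through $\partial_+(M,f)$. For the forward orbit, either $\gamma$ hits $\partial_-(M,f)$ at some finite time $T>0$, in which case I set $\omega(x)=\gamma(T)$, or $\gamma$ is defined for all $t\ge 0$. In the latter case $f\circ\gamma$ is strictly decreasing and bounded below, so it converges to some $c\in\R$; by compactness of $M$ the $\omega$-limit set $\Omega(x)$ is non-empty, closed, connected, and flow-invariant, and the usual argument (flow-invariance combined with constancy of $f\equiv c$ on $\Omega(x)$) forces $\nabla\cdot f\equiv 0$ on $\Omega(x)$, whence $\Omega(x)\subset\crit(f,\F)$.

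Since $\crit(f,\F)$ is a disjoint union of transverse circles and $\Omega(x)$ is connected, $\Omega(x)$ sits inside a single critical circle $C$. Here I invoke a smoothly-parameterized Morse lemma along $C$: on a small tubular neighborhood of $C$, $\nabla$ is leafwise hyperbolic at each point of $C$, with leaf-stable and leaf-unstable manifolds depending continuously on the base point of $C$. An orbit accumulating at some $y\in C$ must therefore either converge to $y$ along its leaf-stable manifold or else leave every fixed chart neighborhood of $y$, contradicting accumulation; so $\Omega(x)=\{\omega(x)\}$. The backward orbit is handled symmetrically, yielding $\alpha(x)\in\crit(f,\F)\cup\partial_+(M,f)$ and completing (i).

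For (ii), I equip $M$ with an auxiliary Riemannian metric and choose, for each critical circle $C$, a small compact tubular neighborhood $U_C$ on which the leafwise Morse normal form applies; the classical estimate near a hyperbolic zero (the length of an orbit approaching or departing the critical locus is bounded by a multiple of its initial distance to it) gives a constant $L_C$ bounding the length of any connected component of $\gamma\cap U_C$. On the compact complement $K=M\setminus\bigcup_C\Int(U_C)$, the continuous function $-\nabla\cdot f$ admits a positive lower bound $\delta$ and $\|\nabla\|$ an upper bound $N$; hence the time $\gamma$ spends in $K$ is at most $(\max f-\min f)/\delta$ and its length in $K$ at most $N(\max f-\min f)/\delta$. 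Summing these contributions over the finitely many critical circles yields a length bound independent of $x$.

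The main obstacle, I expect, is converting the local Morse-theoretic estimate into a uniform bound on the \emph{number} of visits of any orbit to a given $U_C$: since leaves need not be closed and can meet a single critical circle infinitely often, one cannot argue leaf by leaf. A robust remedy is to choose each $U_C$ as the flow-saturation of a small disk transverse to $C$, so that any orbit enters and exits $U_C$ at most twice — once near its $\alpha$-end and once near its $\omega$-end — with all intermediate passes excluded by the hyperbolic picture in the leafwise Morse chart.
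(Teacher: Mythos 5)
Your part (i) is essentially correct: the $\omega$-limit set argument works, with the small caveat that a limit point $y\in C$ need not lie in the leaf $L_x$ carrying the orbit, so ``converging to $y$ along its leaf-stable manifold'' is not quite the right phrase. The clean version is: once the forward orbit is trapped in a foliated Morse chart around $C$, its tail is connected, tangent to $\F$, hence contained in a single plaque of that chart, and in that plaque it converges to the unique hyperbolic zero. Note that this is a much longer route than the paper's, which disposes of both parts in one line by reducing everything to the boundedness of $f$ (via the decrease of $f$ per unit of length).

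The genuine gap is in part (ii), exactly where you flag it: the number of connected components of $\gamma\cap U_C$ is not controlled, and your proposed remedy does not work. ``The flow-saturation of a small disk transverse to $C$'' is not a neighborhood of the circle $C$; moreover a flow-saturated set contains every orbit it meets entirely, so ``entering and exiting it at most twice'' is not a meaningful condition. More importantly, the claim that all intermediate passes are excluded is false: an orbit can pass arbitrarily close to critical points that are neither its $\alpha$- nor its $\omega$-limit (orbits shadowing a broken trajectory through an intermediate critical point do exactly this), and since the leaf $L_x$ may meet a single circle $C$ in infinitely many points with infinitely many distinct values of $f$, nothing in the local hyperbolic picture alone rules out unboundedly many deep passes. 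The standard repair is to take nested tubes $U_C'\Subset U_C$ around each critical circle: every component of $\gamma\cap U_C$ that meets $U_C'$ forces $\gamma$ to traverse the shell $U_C\setminus \Int(U_C')$, a compact set disjoint from $\crit(f,\F)$ on which $-\nabla\cdot f\ge\delta'>0$ and $\|\nabla\|\le N$; each traversal therefore lasts a time bounded below and costs a definite amount of decrease of $f$, so the total decrease $\max f-\min f$ bounds the number of deep passes uniformly, while the components avoiding $U_C'$ are absorbed into your estimate on the complement. With that counting step supplied, your decomposition does yield the uniform length bound.
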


\begin{proof} If not, some orbit
 would have infinite length; hence $f$ would not be bounded on $M$, a contradiction.
\end{proof}

\begin{dfn}\label{w_dfn}
 For every subset $X\subset\partial_-(M,f)\cup \partial_+(M,f)\cup \crit(f,\F)$,
define the stable and the unstable manifold of $X$ with respect to $\nabla$ as
 $$\W^s(\nabla, X)=\omega\mun(X)$$
 $$\W^u(\nabla,X)=\alpha\mun(X)$$
\end{dfn}

In particular, for each
 connected component $C$ of the  index-$i$ critical locus $\crit^i(f,\F)$ ($0\le i\le m-1$), 
 the stable (resp. unstable)
 manifold  $\W^s(\nabla, C)$
(resp.  $\W^u(\nabla, C)$) is a submanifold of $M$ transverse to $\F$ and to $\partial M$;
 and
 its interior is a bundle of fibre $\R^{m-1-i}$ (resp.
$\R^{i}$) over the circle $C$.

\begin{dfn} We say that a leafwise pseudogradient is \emph{globally Kupka-Smale} if,
 for every two connected components $C$, $C'$ of ${\rm Crit}(f,\F)$, the manifolds
$\W^s(\nabla, C)$ and  $\W^u(\nabla, C')$ are transverse in $M$.
\end{dfn}

 The global Kupka-Smale property is generic among the leafwise pseud\-o-grad\-ients for
 $f$.
Indeed, this genericity is well-known in the framework of  $1$-parameter families of functions;
a similar argument holds for leafwise Morse functions.
 \emph{From now on
 in this subsection \ref{order_ssec}, we assume
 that $\nabla$ is a globally Kupka-Smale leafwise pseudo-gradient. }
 
It is convenient to extend the index function by defining $\ind_x(f,\F)=-\infty$ for $x\in\partial_-(M,f)$ and $\ind_x(f,\F)=+\infty$ for $x\in\partial_+(M,f)$.

 \begin{lem}\label{monotony_lem}\
 For every
  $x\in M$, one has $\ind_{\omega(x)}(f,\F)\le\ind_{\alpha(x)}(f,\F)$.
   \end{lem}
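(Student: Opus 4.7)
\medskip
\noindent\emph{Proof sketch.} The plan is to deduce the inequality from a dimension count, using the orbit through $x$ as a witness to a non-trivial transverse intersection between the relevant unstable and stable manifolds.

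First I would dispose of the extremal cases. If $\alpha(x)\in\partial_+(M,f)$, then $\ind_{\alpha(x)}(f,\F)=+\infty$ by the extended convention and the inequality is automatic; likewise if $\omega(x)\in\partial_-(M,f)$. If $x$ is itself leafwise critical, then $\alpha(x)=\omega(x)=x$ and the two sides coincide. So the only serious case is when $x$ is regular and both $\alpha(x)$ and $\omega(x)$ belong to $\crit(f,\F)$; denote by $C$ (resp.\ $C'$) the component of $\crit(f,\F)$ containing $\alpha(x)$ (resp.\ $\omega(x)$), and by $i$ (resp.\ $j$) the corresponding leafwise Morse index.

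Next I would record the relevant dimensions. Inside a single leaf (of dimension $m-1$), the unstable manifold of an index-$i$ critical point of $\nabla\vert_L$ has dimension $i$, and the stable manifold of an index-$j$ critical point has dimension $m-1-j$. Sweeping along the circles of critical points, $\W^u(\nabla,C)$ becomes a bundle over $C$ with fibre $\R^i$, of total dimension $i+1$, and similarly $\W^s(\nabla,C')$ has total dimension $m-j$. By Definition \ref{w_dfn}, $x$ lies in both, and the global Kupka--Smale hypothesis asserts that they are transverse in $M$; therefore near $x$ their intersection is a submanifold of dimension $(i+1)+(m-j)-m=i-j+1$.

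Finally I would use the orbit of $x$ to bound this dimension from below. Since $\alpha$ and $\omega$ are constant along $\nabla$-orbits, the orbit of $x$ sits inside $\W^u(\nabla,C)\cap\W^s(\nabla,C')$; and because $x$ is regular, $\nabla(x)\ne 0$, so this orbit is locally a genuine one-dimensional submanifold of $M$ at $x$. Consistency forces $i-j+1\geq 1$, i.e.\ $i\geq j$, which is the desired inequality. No single step looks like a real obstacle; the only point requiring care is the extra $+1$ in the dimensions of $\W^u(\nabla,C)$ and $\W^s(\nabla,C')$ coming from the circles of critical points in the foliated setting, as opposed to the classical Morse case where critical points are isolated.
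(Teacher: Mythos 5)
Your proof is correct and follows essentially the same route as the paper: a dimension count on the transverse intersection $\W^u(\nabla,C)\cap\W^s(\nabla,C')$, which must contain the one-dimensional orbit through $x$, forcing $i-j+1\ge 1$. The only difference is that you spell out the extremal and critical cases that the paper dismisses in one line.
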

\begin{proof} The only case to consider is when $x$ is not leafwise critical, but
 $c=\omega(x)$ and $c'=\alpha(x)$
are leafwise critical. Let $i$ (resp. $i'$) be the index of $c$ (resp. $c'$). Since
$\W^s(\nabla, c)$ and  $\W^u(\nabla, c')$ are transverse in $M$, and of respective
dimensions $m-i$ and $i'+1$, the dimension
of their intersection is $i-i'+1$. On the other hand,
 the intersection containing the orbit through $x$,
 its dimension is at least $1$.
\end{proof}

 \begin{rmk}  At this point, the Morse theory of foliations shows its limitations,
  which
  are well-known in the frame of the
  $1$-parametric families of Morse functions.  Call an orbit \emph{exceptional}
   if its extremities are two leafwise critical
 points
of {the same index.} Such an orbit corresponds to a handle sliding, in Smale's sense,
    in a $1$-parametric family of Morse functions.
     After the global Kupka-Smale property, $\nabla$
admits at most a finite number of {exceptional}
   orbits.
   
  The classical Kupka-Smale property does not in general hold in restriction to the leaves.
   In every leaf $L$,
    for every pair of critical points in $L$ \emph{but maybe a finite number,}
   their stable and unstable
  manifolds for the pseudo-gradient $\nabla\vert_L$
   are transverse \emph{in $L$.}
   
 In general, no choice of the pseudo-gradient can avoid
 the existence of exceptional orbits.  The extremities of an exceptional
orbit can belong to the same connected component of
the critical locus. Also, given  two components $C$, $C'$ of the critical locus
 of the same index, there may exist two exceptional orbits, the one from $C$ to $C'$
 and the other from $C'$ to $C$. Then,
 $C$ and $C'$  cannot be separated in $M$ by any hypersurface
transverse to $\nabla$.
\end{rmk}

However, if $C$ and $C'$ have distinct indices, such a separating hypersurface does exists.

\begin{lem}\label{separation_lem}
 For each $1\le i\le m-1$, there is a closed hypersurface $H$ in $M$,
transverse to $\nabla$ and splitting $M$ into two domains $M_-$, $M_+$,
 such that
 \begin{itemize}
 \item 
   $\partial_-(M,f)$ and the leafwise critical points of indices 
at most $i-1$ lie in $M_-$;
\item $\partial_+(M,f)$ and the leafwise critical points of indices 
at least $i$ lie in $M_+$.
\end{itemize}
\end{lem}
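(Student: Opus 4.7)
The plan is to rearrange the leafwise Morse function $f$ into a new leafwise Morse function $\tilde f$ with the same critical locus and pseudo-gradient $\nabla$, but whose critical values on $\crit^{\ge i}(f,\F)$ all lie strictly above those on $\crit^{\le i-1}(f,\F)$; a regular level of $\tilde f$ in the gap between the two ranges will then furnish the separating hypersurface.

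Set $K_- = \partial_-(M,f) \cup \crit^{\le i-1}(f,\F)$ and $K_+ = \partial_+(M,f) \cup \crit^{\ge i}(f,\F)$; these are disjoint compact subsets of $M$. By the monotony Lemma \ref{monotony_lem} and because $\alpha$-limits never lie on $\partial_-(M,f)$, no $\nabla$-orbit runs from $K_-$ to $K_+$. The heart of the argument is to produce a smooth function $\pi : M \to [0,1]$ satisfying $\pi \equiv 0$ on an open neighborhood of $K_-$, $\pi \equiv 1$ on an open neighborhood of $K_+$, and the Lyapunov-type inequality $\nabla \cdot \pi \le 0$ throughout $M$. Starting from a smooth plateau $\pi_0$ taking the value $0$ near $K_-$ and $1$ near $K_+$, one defines the continuous non-increasing function
\[
\pi_1(x) = \sup_{t \ge 0} \pi_0\bigl(\nabla^t(x)\bigr),
\]
well-defined thanks to the uniform bound on orbit lengths (Lemma \ref{length_lem}); the monotony condition ensures that $\pi_1$ retains the prescribed values near $K_-$ and $K_+$. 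A flow-adapted smoothing (an $\nabla$-invariant convolution) then turns $\pi_1$ into the desired smooth $\pi$ without destroying the pointwise non-increase.

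With $\pi$ in hand, set $\tilde f = f + K \pi$ for some constant $K > \max_M f - \min_M f$. Since $\pi$ is locally constant near every critical circle and near $\partial M$, one has $\crit(\tilde f, \F) = \crit(f, \F)$ with identical indices and $\partial_\pm(M, \tilde f) = \partial_\pm(M, f)$; moreover, the identity $\nabla \cdot \tilde f = \nabla \cdot f + K\,\nabla \cdot \pi < 0$ at every non-critical point shows that $\nabla$ remains a leafwise pseudo-gradient for $\tilde f$. By construction, $\tilde f$ takes values in $[\min f, \max f]$ on the index-$\le i-1$ critical circles and in $[\min f + K, \max f + K]$ on the index-$\ge i$ ones, opening up a genuine gap $(\max f, \min f + K)$ between the two ranges.

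Finally, pick a regular value $c$ of $\tilde f$ in this gap. The level set $H = \tilde f^{-1}(c)$ is a smooth closed hypersurface in the interior of $M$, disjoint from $\crit(\tilde f, \F)$, and the inequality $\nabla \cdot \tilde f < 0$ on $H$ makes $H$ transverse to $\nabla$. The open halves $M_\pm = \{x \in M : \tilde f(x) \gtrless c\}$ supply the required splitting, with $K_- \subset M_-$ and $K_+ \subset M_+$. The only delicate point is the construction of $\pi$: while monotony makes the Lyapunov constraints formally compatible, it is the flow-adapted smoothing that must be carried out with care in order to promote $\pi_1$ to a smooth function with $\nabla \cdot \pi \le 0$ everywhere.
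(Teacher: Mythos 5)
Your reduction of the lemma to the existence of a smooth function $\pi:M\to[0,1]$ with $\pi\equiv 0$ near $K_-$, $\pi\equiv 1$ near $K_+$ and $\nabla\cdot\pi\le 0$ is a genuinely different route from the paper's (which works in the non-Hausdorff space of $\nabla$-orbits and obtains $H$ as the image of a section of an $\R$-bundle over a compact Hausdorff open subset of that space); note that the paper derives such a plateau function \emph{from} the lemma in the proof of Proposition \ref{ordered_pro}, so you are in effect proving the two statements in the reverse order. The passage from $\pi$ to $H$ via $\tilde f=f+K\pi$ and a level set in the gap is correct. The construction of $\pi$ itself, however, has two genuine gaps.

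First, the assertion that $\pi_1(x)=\sup_{t\ge0}\pi_0(\nabla^t(x))$ vanishes near $K_-$ does not follow from Lemma \ref{monotony_lem}. What is needed is that forward orbits issued from a neighborhood of $\crit^{\le i-1}(f,\F)$ avoid a fixed neighborhood of $K_+$, inside which $\pi_0$ must be supported (a choice you leave implicit). A point $x_k$ near a critical circle $C$ of index $\le i-1$ need not satisfy $\ind(\alpha(x_k))\le i-1$, so the monotony of endpoints of a \emph{single} orbit says nothing about where the forward orbit of $x_k$ travels. The correct argument is a compactness one: if forward orbits of points $x_k\to x\in C$ met a fixed neighborhood of $K_+$, their Hausdorff limits would be descending orbit chains issued from $C$, whose critical points all have index $\le i-1$ by the monotony property \emph{for chains}. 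This is precisely the content of the paper's Claims 2--4, and it is indispensable here too. Second, $\pi_1$ is not continuous in general: as a supremum of continuous functions it is only lower semicontinuous, and it genuinely jumps across stable manifolds of critical points (the forward orbit of a point of $\W^s(\nabla,c)$ stops at $c$, while nearby orbits continue past $c$ and may pick up larger values of $\pi_0$); excluding such jumps again requires the chain analysis and a careful choice of the support of $\pi_0$. Even granting continuity, promoting a continuous Lyapunov-type function to a smooth one satisfying $\nabla\cdot\pi\le 0$ everywhere is not accomplished by any naive convolution --- averaging along the flow preserves monotonicity but not transverse regularity, while averaging transversally destroys monotonicity --- and you explicitly leave this step, which you yourself identify as the crux, undone. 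The paper's detour through the orbit space and the Haefliger section lemma is designed exactly to bypass this smoothing problem.
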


\begin{proof} The proof 
belongs to elementary general topology. For short,
put $$M'=M\setminus \crit(f,\F)$$ Consider
 the space $\OO$ of the orbits of $\nabla$ which are regular 
 (in other words, not reduced to a single
  leafwise critical point),
 endowed
  with the quotient topology;
and the projection $\pi:M'\to\OO$.  For short, write $\ind(x)$ instead of $\ind_x(f,\F)$.
\medbreak
Claim 0 ---
\emph{ The map $$M\to\{0,1, \dots, m-1,+\infty\}:x\mapsto\ind(\alpha(x))$$
is lower semicontinuous on $M$.  The map $$M\to\{-\infty,0,1, \dots, m-1\}:x\mapsto\ind(\omega(x))$$
is upper semicontinuous on $M$.}

This follows at once from Lemma \ref{monotony_lem}.
\medbreak
Claim 1 --- \emph{  $\OO$  is a smooth
 $(m-1)$-manifold  \emph{in general not Hausdorff,} compact
 (in the sense that $\OO$ has the usual open cover property) and without boundary.}
 
  Indeed, for every $t\in\R$ and every embedding $\phi$
  of the open $(m-1)$-disk $D^{m-1}$ into the level set $M'\cap f\mun(t)$,
  the image $\phi(D^{m-1})$ meets every orbit at most once (since $\nabla\cdot f\le 0$);
  hence $\pi\circ\phi$ is a local coordinate chart for $X$. One thus gets an atlas
  whose changes of
  coordinates are obviously smooth. Clearly, there is a small open neighborhood
  of $\crit(f,\FF)$ in $M$ whose complement meets every regular orbit;
   hence $\OO$ is compact. The claim 1 is proved.
 \medbreak
Let us understand the lack of Hausdorff separation in $\OO$.
 By an \emph{orbit chain}, we mean  a finite sequence of regular
orbits
$\pi(x_1)$, \dots, $\pi(x_\ell)$ ($\ell\ge 1$)
 such that
  $\omega(x_{j-1})=\alpha(x_j)$ for each $2\le j\le\ell$.
 The \emph{endpoints} of the chain are the pair $(\alpha(x_1),\omega(x_\ell))$.
 After Lemma \ref{monotony_lem}, the indices of the critical points of the
 chain form a nonincreasing sequence \emph{(monotony property)}.
 \medbreak
 Claim 2 ---\emph{ From any sequence of regular
  orbits, one can extract a subsequence Hausdorff-converging
 towards an orbit chain.}
 
 This follows easily from Lemma \ref{length_lem} (ii).
 \medbreak
Claim 3 --- \emph{If
 two distinct regular orbits
$\pi(x)$, $\pi(y)$ are {note} separated in $\OO$, then they belong to a same orbit chain.}

This follows at once from Claim 2 applied to a sequence of regular orbits $\pi(x_k)$ which
converges both to $\pi(x)$ and to $\pi(y)$ in $\OO$.
\medbreak
Consider the subset $U\subset M'$ of the noncritical
points $x$ such that
\begin{equation}\label{U_eqn}
\ind(\alpha(x))\ge i\text{ and }
\ind(\omega(x))\le i-1
\end{equation}
Since $U$ is open in $M'$ (Claim 0),  $\pi(U)$ is open in $\OO$.
By Claim 3 and the monotony property,
 $\pi(U)$  is Hausdorff.
\medbreak
Claim 4 --- \emph{$\pi(U)$ is compact.}

Indeed, let $(u_k)$ be a sequence in $U$. Following Claim 2, after passing to a subsequence,
 the orbits through $u_k$ Hausdorff-converge in $M$ to an orbit chain.
  The index function being
 locally constant on $\partial M\cup \crit(f,\F)$, the endpoints $(c,c')$
 of this chain satisfy $\ind(c)\ge i$ and $\ind(c')\le i-1$. By the monotony
 property, one of the orbits $\pi(x)$ composing the chain satisfies 
 the inequalities (\ref{U_eqn}).
 We have thus found an accumulation point $\pi(x)$ in $\pi(U)$
 for the sequence $\pi(u_k)$:
the claim 4 is proved.

\medbreak
To sum up, $\pi(U)$ is a Hausdorff closed $(m-1)$-manifold.
 The restricted projection $\pi\vert_U$
is a submersion of $U$ onto $\pi(U)$ whose fibres are all diffeomorphic to $\R$.
Such a projection necessarily
 admits a section $s$: one can solve this elementary exercise,
or alternatively apply a more general
lemma due to Haefliger, see e.g. \cite{meigniez_02}.
The image $H=s(\pi(U))\subset U$ is a closed hypersurface transverse to $\nabla$, and
separating $U$ into two domains $U_-$, $U_+$ such that $\nabla$
enters $U_-$ and exits $U_+$ along $H$. Let $M_-$ (resp. $M_+$)
be the topological closure of $U_-$ (resp. $U_+$) in $W$. 
\medbreak
Claim 5 --- \emph{ For every
 $x\in M\setminus U$, one has
 \begin{itemize}
 \item $x\in M_-$ iff $\ind(\alpha(x))\le i-1$;
 \item $x\in M_+$
 iff $\ind(\omega(x))\ge i$.
 \end{itemize}}
 
 Indeed, the points $y\in M$
 such that $$\ind(\alpha(y))\ge m-1\text{ and }\ind(\omega(y))\le 0$$
  form an open (after Claim 0) and dense subset
 in $M$. Hence, $x$ is the limit of a sequence $(y_p)$ of such points. For $p$
 large enough, $y_p\in U$, hence
 its orbit $\pi(y_p)$ intersects transversely
  $H$ in a unique point $h_p=s(\pi(y_p))$. The point $h_p$ splits the orbit
  $\pi(y_p)$ into two subintervals  $\pi(y_p)\cap U_-$ and  $\pi(y_p)\cap U_+$.
   By Claim 2, after passing to a subsequence,
 the orbits $\pi(y_p)$ Hausdorff-converge in $M$ to an orbit chain 
 $\pi(x_1)$, \dots, $\pi(x_\ell)$, such that $x=x_j$ for some $1\le j\le\ell$.
  The index function being
 locally constant on $\partial M\cup \crit(f,\F)$, the endpoints of the chain satisfy 
 $$\ind(\alpha(x_1))\ge m-1
 \text{ and }\ind(\omega(x_\ell))\le 0$$
  By the monotony property, one and only one
   of the orbits $\pi(x_k)$ in
  the chain lies in $U$. We can choose $x_k\in H$.
  The point $x_k$ splits the orbit chain
  into two subintervals which are the Hausdorff limits of
   $\pi(y_p)\cap U_-$ and  $\pi(y_p)\cap U_+$; the first is thus contained in $M_-$,
   the second in $M_+$.
  Since $x\notin U$, one has $j\neq k$.
 If $j<k$, then $\ind(\omega(x))\ge i$ (monotony property)
  and $x\in M_+$. If $j>k$, then $\ind(\alpha(x))\le i-1$ (monotony property)
 and $x\in M_-$. The proofs of Claim 5 and of
 Lemma \ref{separation_lem} are complete.
\end{proof}

\begin{proof}[Proof of Proposition \ref{ordered_pro}]  For each $1\le i\le m-1$, after 
Lemma \ref{separation_lem}, there is a smooth plateau function $\phi_i$ on $M$
such that
\begin{itemize}
\item
 $\nabla\cdot\phi_i\le 0$;
\item $\phi_i=0$ on a neighborhood of $\partial_-(M,f)$ and of $\crit^{\le i-1}(f,\F)$;
\item $\phi_i=1$ on a neighborhood of
$\partial_+(M,f)$ and of  $\crit^{\ge i}(f,\F)$.
\end{itemize}
For every $\epsilon>0$, consider on $M$ the function
 $$g_\epsilon={\phi_1+\dots+\phi_{m-1}+\epsilon f}$$
 Obviously, every point of $\partial M$ is a local extremum for $g_\epsilon$; one has $\partial_\pm(M,g_\epsilon)=\partial_\pm(M,f)$;
 and $g_\epsilon$ coincides, for each $i$,  with
 ${i+\epsilon f}$ on some neighborhood of $\crit^i(f,\F)$.
Moreover, $\nabla\cdot g_\epsilon<0$ on $M\setminus \crit(f,\F)$.
Hence, $g_\epsilon$ is leafwise Morse with the same critical locus and the
same indices as $f$. Clearly,  $g_\epsilon$ is ordered
  provided that $\epsilon\vert f\vert<1/2$ on $M$.
\end{proof}

We end this subsection with a corollary of Proposition \ref{ordered_pro}.
 Let $f$ be an ordered leafwise Morse function on $M$.
 
 \begin{dfn}\label{nsi_def}  We call $f$ \emph{nearly self-indexing} if
 \begin{itemize}
\item $\vert f(c)-\ind(c)\vert<1/2$  at every leafwise critical point $c$;
\item $f\mun(-1/2)={\partial_-(M,f)}$ and $f\mun(m-1/2)={\partial_+(M,f)}$.
\end{itemize}
\end{dfn}
 
Recall that $\partial_-(M,f)$ and/or $\partial_+(M,f)$ can be empty.
  It is convenient, after reparametrizing
the values of $f$, to arrange that $f$ is nearly self-indexing.
We have thus
 decomposed $M$ into $m$
compact domains $$M_i=f\mun[i-1/2,i+1/2]$$
($0\le i\le m-1$)
with boundaries transverse to $\F$. Write $M_{\le i}$ for $ f\mun([-1/2,i+1/2])$
and $M_{\ge i}$ for $f\mun([i-1/2,m-1/2])$.

\begin{rmk} This
 is not quite a handle decomposition.
In every leaf $L$, and for each $i$, the generally noncompact manifold $L\cap M_i$
 decomposes into a countable, locally finite family of index-$i$ compact
handles.
 When one moves continuously from one leaf to another, at each exceptional orbit,
one of  these
handles slides on another.
\end{rmk}

\begin{dfn}\label{faithful_dfn} A closed submanifold $S\subset M$ is said to be \emph{faithful} to $\F$ if $S$ is transverse to $\F$, meets every leaf $L$ of $\F$, and if
the intersection $L\cap S$ is connected (hence a single leaf of $\F\vert_S$).
\end{dfn}
In other words, the embedding $S \subset M$ induces a Haefliger equivalence between the holonomy pseudogroups of the foliations $\F\vert_S$ and~$\F$.

\begin{cor}\label{faithful_cor} Let $f$ be a nearly self-indexing leafwise Morse function on the $m$-dimensional foliated manifold $(M,\F)$. If $m\ge 4$, then for each $1 \le i \le m-3$, the level set $f\mun(i+\frac12)$ is faithful to $\F$.
\end{cor}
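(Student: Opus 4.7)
The plan is to verify the three defining conditions of Definition \ref{faithful_dfn} for the level hypersurface $S := f\mun(i+\tfrac12)$. \emph{Transversality to $\F$} is immediate: near self-indexing forces every leafwise critical value of $f$ to lie strictly within $\tfrac12$ of an integer, so $i+\tfrac12$ is a regular value of $f\vert_L$ for every leaf $L$; hence $df$ does not vanish on $T_xL$ at any $x \in L \cap S$, which gives the transversality.

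For \emph{non-emptiness} of $L \cap S$, fix a leafwise pseudo-gradient $\nabla$ and a leaf $L$. For each leafwise critical point $c \in L$ of index $k \ge 1$, the stable manifold $\W^s(\nabla\vert_L, c)$ has codimension $k$ in $L$, hence is nowhere dense; since $L \cap \crit(f,\F)$ is countable, Baire category produces $x \in L$ with $\omega(x) \in \crit^0(f,\F) \cup \partial_-(M,f)$. The same argument applied to $-\nabla$ yields $y \in L$ with $\alpha(y) \in \crit^{m-1}(f,\F) \cup \partial_+(M,f)$. Near self-indexing then provides points of $L$ of $f$-value strictly below $\tfrac12$ and strictly above $m-\tfrac32$. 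Since $1 \le i \le m-3$ implies $\tfrac12 < i+\tfrac12 < m-\tfrac32$, the intermediate value theorem applied along a path in the connected leaf $L$ produces a point of $L \cap S$.

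For \emph{connectedness} of $L \cap S$, set $A = L \cap M_{\le i}$ and $B = L \cap M_{\ge i+1}$, so $A \cup B = L$ and $A \cap B = L \cap S$. The ordered and near-self-indexing conditions ensure that every leafwise critical point of $f\vert_L$ in $A$ has index at most $i$, whence Morse theory presents $A$ as a cobordism from $\partial_-L$ (possibly empty) to $L \cap S$ built by attaching handles of index $\le i$. Dually, $A$ is built from $L \cap S$ by attaching handles of index $\ge (m-1)-i$, so the inclusion $L \cap S \hookrightarrow A$ is $(m-i-2)$-connected. For $i \le m-3$ this is at least $1$-connected, hence bijective on $\pi_0$. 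Symmetrically, $L \cap S \hookrightarrow B$ is $i$-connected, giving $\pi_0$-bijectivity when $i \ge 1$. With $L$ connected and both intersection inclusions $\pi_0$-bijective, the bipartite graph on $\pi_0(A) \sqcup \pi_0(B)$ with edge set $\pi_0(L \cap S)$ is a disjoint union of single edges whose connectivity (inherited from $L$) forces $\pi_0(L \cap S)$ to be a singleton.

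The main technical delicacy is the handle-theoretic connectivity estimate, especially when $\partial_-L$ or $\partial_+L$ is empty; in that case one treats $A$ (respectively $B$) as built from the void by attaching $0$-handles at the minima (respectively maxima) of $f\vert_L$, and the dualization argument still yields the stated connectivity bound.
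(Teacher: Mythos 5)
Your proof is correct and follows essentially the same route as the paper: restrict $f$ to a leaf $L$ and observe that the value $i+\tfrac12$ separates the critical values of $f\vert_L$ of indices $\le 1$ from those of indices $\ge m-2$, so that standard Morse theory on the connected $(m-1)$-manifold $L$ yields connectedness of the level set. You merely spell out the handle-attachment argument behind that standard fact (your index bounds $i\ge 1$ and $i\le m-3$ match the paper's hypotheses exactly) and add explicit, correct verifications of transversality and non-emptiness, which the paper leaves implicit.
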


\begin{proof} Consider a leaf $L$ of $\F$ and, on this connected  ($m-1$)-manifold maybe
with boundary, the genuinely Morse function  $g=f\vert_L$.
In $\R$, the value $i + \frac12$ does not lie in the closure of $g(\partial L)$,
and separates the critical values of $g$ of indices $0$ and $1$ from the critical
values of $g$ of indices $m-2$ and $m-1$.
Hence, $g^{-1}(i + \frac12)$
 is connected. 
\end{proof}

By induction on $m$, there also exists a closed $3$-dimensional submanifold of $M$ faithful to $\F$. The existence of faithful hypersurfaces in every taut codimension-$1$ foliated manifold of dimension at least $4$ is already known \cite{meigniez_16}, but we feel that the present construction, by means of the Morse theory of foliations, is more natural and clearer. See also \cite{martinez_14} for some particular cases of faithful submanifolds, obtained by different means.

\subsection{Cancelling leafwise local extrema}\label{cancellation_ssec}

Let, as before, $M$ be a compact $m$-dimensional manifold with smooth boundary, endowed with a codimension-$1$ coorientable taut foliation $\F$, transverse to $\partial M$.
Let $\partial_-M$, $\partial_+ M$ be a partition of $\partial M$ into two open subsets
(perhaps empty).

\begin{pro}\label{cancellation_pro} Assume that $m\ge 5$ and that every leaf of $\F$ meets $\partial_-M$ (\rp both $\partial_-M$ and $\partial_+M$).

 Then, $M$ admits a nearly self-indexing leafwise Morse function $f$ without  leafwise local minima (\rp extrema) in the interior of $M$ and such that $\partial_\pm(M,f)=\partial_\pm M$.
\end{pro}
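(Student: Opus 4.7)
The plan is to reduce to the classical Smale cancellation of index-$0$ critical points of a Morse function on a cobordism with $\partial_-\neq\emptyset$, but to carry out that cancellation uniformly over the leaves of $\F$. I will treat the ``minima'' version in detail; the parenthetical case follows by applying the same argument to $-f$ on the interior $(m-1)$-extrema after a second reordering.

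Starting point: Propositions \ref{morse_pro} and \ref{ordered_pro}, followed by a reparametrization of values, provide a nearly self-indexing leafwise Morse function $f_0$ on $M$ with $\partial_\pm(M,f_0)=\partial_\pm M$; let $\nabla$ be a globally Kupka--Smale leafwise pseudogradient for it. Write $C_1,\dots,C_p$ for the interior leafwise index-$0$ critical circles. I shall cancel them in turn, preserving the leafwise-Morse nature and the boundary behavior, until none remain; at each step the remaining critical locus only shrinks at indices $0$ and at most grows at index $1$, so the process terminates.

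Main step (cancellation of one $C=C_j$). Since $C$ is a closed transversal to $\F$, it has a foliated tubular neighborhood $\nb(C)\cong C\times\D^{m-1}$ on which every leaf of $\F$ meets $C$ in a single point and carries its own local minimum. By hypothesis each such leaf also meets $\partial_-M$. Using the flow of $-\nabla$ on $\W^s(\nabla,C)$ and the fact that this stable manifold is a genuine $(m-1)$-disk bundle over $C$, I push the index-$0$ locus $C$ down toward $\partial_-M$: concretely, I build a smoothly embedded surface $\Sigma\subset M$, transverse to $\F$, with $\Sigma\cong S^1\times\I$, $\partial\Sigma= C\sqcup C_\partial$ with $C_\partial\subset\partial_-M$, each slice $\{\theta\}\times\I$ an arc in a single leaf from $C$ to $\partial_-M$ avoiding the other critical loci. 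The existence of $\Sigma$ rests on the leafwise connectedness of the complement of the higher-index critical loci inside each leaf (available by general position since $m\ge 5$) together with Lemma \ref{separation_lem}, which gives a transverse hypersurface separating the higher-index critical points from $\partial_-M\cup C$.

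Using $\Sigma$ I perform a \emph{foliated birth--cancellation}: in a small tubular neighborhood of the $\partial_-M$-end of $\Sigma$, create by a standard Morse modification a canceling pair of leafwise index-$0$ and index-$1$ critical circles (a ``birth'' along $C$); then isotope the new index-$1$ handle along $\Sigma$ so that in each leaf its descending disk is an arc running from the new index-$1$ point straight into the old interior minimum $L\cap C$; finally apply the Smale cancellation inside a tubular neighborhood of $\Sigma$ --- it is a purely local modification of $f_0$ supported near $\Sigma$, and because $\Sigma$ is a two-dimensional surface inside a manifold of dimension $m\ge 5$, there is ample room to perform the isotopy while keeping transversality with all stable and unstable manifolds of other critical circles. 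The net effect kills $C$ together with the newly created auxiliary index-$1$ circle, leaving the rest of $\crit(f_0,\F)$ and $\partial_\pm(M,f_0)$ unchanged and yielding a new leafwise Morse function.

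Finishing: After finitely many such cancellations, no interior leafwise local minimum remains. A final application of Proposition \ref{ordered_pro} plus reparametrization puts the resulting function in nearly self-indexing form. The main obstacle is the third step: ensuring that the \emph{foliated} isotopy implementing the Smale cancellation can be carried out simultaneously in every leaf without producing new leafwise-Morse degeneracies or breaking the Kupka--Smale transversality with the higher-index critical loci. The hypothesis $m\ge 5$ enters exactly here, through the usual general-position argument that guarantees enough codimension to isotope the two-dimensional canceling surface $\Sigma$ off the finitely many exceptional orbits and off the stable/unstable manifolds of critical circles of index $\ge 2$.
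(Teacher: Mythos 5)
Your overall strategy --- connect each interior index-$0$ circle $C$ to $\partial_-M$ by leafwise arcs, create an auxiliary canceling pair by a birth, and cancel along those arcs --- is the paper's strategy in spirit, but the two steps you delegate to ``general position'' are exactly where the real difficulty sits, and both contain genuine gaps. The first is the claimed global annulus $\Sigma\cong\S^1\times\I$ of leafwise bridge arcs from $C$ to $\partial_-M$: it need not exist. For a fixed $c\in C$, a bridge arc must terminate in the intersection of $\W^s(\nabla,\partial_-M)$ with the leaf, and that set is the complement, in the leaf, of the stable manifolds of the index-$\ge 1$ critical circles (hypersurfaces in the leaf) and of the basins of the other index-$0$ circles; it is in general disconnected, so the space of bridge arcs for $c$ is disconnected. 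General position in dimension $m\ge5$ does give smooth \emph{local} families of such arcs over small intervals of $C$ (Lemma \ref{stable_lem_2} plus pushing into nearby leaves), but it says nothing about the monodromy of the set of components as $c$ runs once around $C$: finitely many exceptional orbits (leafwise handle slides) occur along the circle and can permute the components, so after one turn you may return to an arc in a different component. This is precisely the Cerf-theoretic obstruction that forces the paper to work only over a finite cover of $C$ by intervals $I_i$, to create one auxiliary pair per interval, to cancel on subintervals, and then to remove the resulting $\ell$ swallowtails via Laudenbach's elementary swallowtail lemma. Without an argument that the monodromy is trivial --- and there is none in general --- your single global $\Sigma$ does not exist and the swallowtail step cannot be skipped.

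The second gap is that your birth has the wrong indices. If you create a canceling index-$0$/index-$1$ pair near the $\partial_-M$-end of $\Sigma$ and then cancel the \emph{new} index-$1$ circle against the \emph{old} minimum $C$, the newly born index-$0$ circle survives as an interior leafwise local minimum, so the number of interior minima has not decreased; your bookkeeping (``leaving the rest of the critical locus unchanged'') omits it. The correct move, used in the paper, is to create an index-$1$/index-$2$ pair slightly above the level $3/2$, positioned along the bridge arcs so that one branch of the leafwise descending disk of the new index-$1$ circle runs down to $C$ and the other exits through $\partial_-M$; cancelling $(C,\ \text{new index-}1)$ then trades the index-$0$ circle for index-$2$ debris (plus reorganized index-$1$ circles), which is harmless. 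With these two corrections your argument becomes the paper's.
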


\begin{rmk} Conversely, the existence of a function $f$
 such that  $\partial_\pm(M,f)=\partial_\pm M$ and without leafwise local
minima (resp. extrema) in the interior implies that every leaf $L$ meets $\partial_-M$ (resp.
both $\partial_-M$ and $\partial_+M$), since
the topological closure $\bar L$ of $L$ in $M$ being compact and saturated, 
the restricted function $f\vert_{\bar L}$
 must reach a minimum and a maximum.
 \end{rmk}
 \begin{rmk}  We do not know if Proposition \ref{cancellation_pro} holds as well
 for $m=3$ nor $4$.
\end{rmk}

\begin{rmk} When $\partial_-M = \emptyset$, the conclusion of Proposition \ref{cancellation_pro} amounts to say that $\F$ is, in the interior of $M$, ``uniformly open'' in the sense of~\cite{bertelson_02}. Hence, any leafwise open invariant differential relation abides the h-principle. In particular, the parametric h-principle holds for leafwise symplectic structures on such manifolds. Unfortunately, that $h$-principle does not allow any control over the structure at the boundary of $M$, unlike Theorem $C$.
\end{rmk}

In order to prove Proposition \ref{cancellation_pro},
we start with a nearly self-indexing leafwise Morse function $f$ on $M$ such that
 $\partial_\pm(M,f)=\partial_\pm M$
(Propositions \ref{morse_pro} and \ref{ordered_pro}). 
 We assume that every leaf meets
$\partial_-M$, and
  we shall cancel the interior leafwise local minima
  of $f$. Of course, if moreover every leaf meets also
$\partial_+M$, a symmetric method also cancels the interior leafwise local maxima.

The cancellation method is inspired by Laudenbach's
 reproof \cite{laudenbach_14}  of the classical Cerf cancellation lemma
  for local extrema in $1$-parameter families of functions. We refer to
 Laudenbach for some details. However, our foliated framework also calls for some
 specific arguments.

 We shall see how to cancel one connected component $C$ of theindex-$0$ critical locus.
  Repeating this argument removes  all the components. After each step,
  $f$ is not ordered any more, but
  we apply Proposition \ref{ordered_pro} and reorder $f$. 
 {  The steps of the cancellation of $C$
  are represented on the (somehow round) Cerf
 diagrams of Figure \ref{cerf_fig}.}
 
 Fix for $f$  a leafwise descending pseudo-gradient $\nabla$ which is globally Kupka-Smale.
 Recall the notations  $\omega(x)$,
  $\W^s(\nabla, X)$ {
   from the above Lemma \ref{length_lem} and Definition \ref{w_dfn}}.
   
The level set $S = f^{-1}(3/2)$ is important in the proof. This compact hypersurface of $M$ is
  transverse to $\F$ and to $\nabla$, and
 separates the leafwise critical points of $f$ of index $0$ and $1$
  from the leafwise critical points of indices $\ge 2$. For every connected
  component $C'$ of $\crit^1(f,\F)$, the intersection $S\cap\W^s(C')$
  is in $S$ a hypersurface  (in fact a bundle of fibre $\S^{m-3}$ over the circle)
   transverse to
  the foliation $\F\vert_S$.
  The  intersections of $S$ with the
   stable manifolds of
  $\partial_-M$ and of $\crit^0(f,\F)$ are, in $S$, finitely many open domains separated by
  these hypersurfaces.

 \begin{lem}\label{stable_lem_1} The endpoint map $\omega$ restricted to $S$ admits
 over the circle $C$ a smooth section $\sigma:C
 \to S$.
 \end{lem}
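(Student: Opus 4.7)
The plan is to construct $\sigma(c)$ by starting from a carefully chosen point close to $c$ (strictly above $c$ in $f$) and flowing backward along $\nabla$ until first reaching the level $S = f^{-1}(3/2)$. The only way this procedure can fail is if the ascending trajectory gets trapped at a leafwise critical point of $f$-value strictly below $3/2$; since $c$ has index $0$ and $f$ is (near) self-indexing, the only such possible traps are leafwise index-$1$ critical circles.

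First I would set up the local geometry near $C$. Fixing an auxiliary leafwise Riemannian metric and invoking a parametric Morse lemma along the circle $C$ of leafwise index-$0$ non-degenerate critical points (transverse to $\F$), the leafwise $\epsilon$-exponential map produces, for $\epsilon>0$ small, a smooth $(m-2)$-sphere bundle $\Sigma_\epsilon\to C$ contained in the basin $\W^s(\nabla,C)$. Each fiber $\Sigma_\epsilon(c)\subset L_c$ is a small leafwise round sphere around $c$, on which $f>f(c)$.

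Next I would identify the bad set
$$B\;=\;\bigcup_{C'}\W^u(\nabla,C'),$$
where the union ranges over the (finitely many) connected components $C'\subset\crit^1(f,\F)$ with $f(C')<3/2$. Each $\W^u(\nabla,C')$ is a smooth $2$-dimensional submanifold of $M$ (one leafwise unstable direction plus the circle direction of $C'$), so $B$ has dimension $2$. A standard Sard-type argument allows us to shrink $\epsilon$ so that $\Sigma_\epsilon$ is transverse to $B$ in $M$; then $\Sigma_\epsilon\cap B$ is a $1$-dimensional submanifold of the $(m-1)$-dimensional manifold $\Sigma_\epsilon$. Since the hypothesis of Proposition~\ref{cancellation_pro} gives $m\ge 5$, we have $\dim C+\dim(\Sigma_\epsilon\cap B)=1+1<m-1=\dim\Sigma_\epsilon$; a generic smooth section $v:C\to\Sigma_\epsilon$ of the sphere bundle (it exists since the fibers are connected) may therefore be chosen entirely disjoint from $\Sigma_\epsilon\cap B$.

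Finally, for each $c\in C$ let $\sigma(c)$ be the first point at which the backward $\nabla$-orbit of $v(c)$ meets the level set $S$. Because $v(c)\notin B$, this orbit cannot converge to any index-$1$ critical circle below level $3/2$; it cannot accumulate at an index-$0$ critical point either, since local minima are repelling for the backward flow; and no leafwise critical point of index $\ge 2$ lies at level $\le 3/2$ by the ordering. Hence $f$ strictly increases along the orbit until the orbit crosses $S$ transversely at some smooth time $T(c)$. Smooth dependence of flows on initial conditions, together with transversality at $S$, makes $\sigma:C\to S$ smooth. The identity $\omega(\sigma(c))=c$ is then immediate: the forward $\nabla$-trajectory from $\sigma(c)$ reverses the upward excursion back to $v(c)\in\W^s(\nabla,c)$ and continues to descend within the basin to $c$.

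The main obstacle is the transversality/dimension step: ensuring that the bad set $B$ really has dimension $2$ (forced by the global Kupka--Smale property of $\nabla$) and that, after a generic shrinking of $\epsilon$, the sphere bundle $\Sigma_\epsilon$ is transverse to $B$ in such a way that the codimension count $1+1<m-1$ leaves room for a section $v$ avoiding $\Sigma_\epsilon\cap B$. This is precisely where the hypothesis $m\ge 5$ is used.
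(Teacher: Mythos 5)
Your proof is correct and follows essentially the same route as the paper's: both arguments reduce to choosing, over $C$, a generic section of (a sub-bundle of) the stable manifold $\W^s(\nabla,C)$ that avoids the $2$-dimensional unstable manifolds of the index-$1$ critical circles, using the dimension count $1+2<m$, and then flowing backward to the first crossing of $S$. Your use of a small sphere bundle $\Sigma_\epsilon$ instead of a section of the full $\R^{m-1}$-bundle is only a cosmetic difference (and, as the paper notes, $m\ge 4$ already suffices here).
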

 
 \begin{proof} The bundle map $\omega:\W^s(\nabla,C)\to C$
  has fibre $\R^{m-1}$. Since $m\ge 4$, a generic
 section $s$ of this bundle is disjoint from $C$ and from
   the $2$-dimensional
 unstable manifolds of the index-$1$ critical locus. 
 For every $c\in C$,
 the orbit of $\nabla$ through $s(c)$ enters $\{f\le 3/2\}$
 at a unique point $\sigma(c)\in S$.
 \end{proof}

Note that $\sigma$ is transverse {in $S$ to the foliation} $\F\vert_S$.

 \begin{lem}\label{stable_lem_2}
  Every leaf of $\F\vert_S$ meets the stable manifold
  $\W^s(\nabla,\partial_-M)$.
 \end{lem}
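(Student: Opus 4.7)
My plan is to reduce the statement to a genericity argument within a single leaf. Fix a leaf $L$ of $\F$; Corollary~\ref{faithful_cor} applied with $i=1$ (legitimate since $m\ge 5$) identifies $L\cap S$ with the corresponding leaf of $\F\vert_S$. The hypothesis yields $X_-:=L\cap\partial_-M\neq\emptyset$; being a codimension-one submanifold of the $(m-1)$-dimensional leaf~$L$, it has dimension $m-2\ge 3$ and is in particular uncountable. The strategy is to find $p\in X_-$ whose backward $\nabla\vert_L$-orbit $\gamma$ reaches the level set $L_S:=\{f\vert_L=3/2\}=L\cap S$: its crossing point with $L_S$ will then lie in $L_S\cap\W^s(\nabla,\partial_-M)$.

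Since $\nabla$ is tangential to $\F$, $\gamma$ stays inside $L$ and $f$ increases strictly along it; either it reaches $3/2$ and we are done, or by a standard Lyapunov argument $\gamma$ asymptotes to a leafwise critical point $c^*$. The step I expect to be the most delicate is showing that $c^*$ must lie in $L$ itself rather than merely in $\overline L$. The key point is that $\nabla$ vanishes transversely to $\F$, so at any leafwise critical point $c^*\in L_{c^*}$ the $-\nabla$-stable manifold in $M$ has no transverse component and coincides with the leafwise $-\nabla\vert_{L_{c^*}}$-stable manifold, which lies entirely in $L_{c^*}$. An orbit contained in $L\neq L_{c^*}$ cannot belong to it, ruling out the asymptote in that case.

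Once $c^*\in L$ is secured, near-self-indexing forces $\ind(c^*)\in\{0,1\}$. The index-zero case is immediate: the $-\nabla\vert_L$-stable manifold of $c^*$ reduces to $\{c^*\}$, so $\gamma$ would be constant, contradicting $p\in\partial_-M\neq c^*$. In the index-one case, $\gamma$ lies in the one-dimensional unstable manifold $\W^u(\nabla\vert_L,c^*)$, whose two descending half-orbits contribute at most two endpoints to~$X_-$. Since the index-one leafwise critical points of $f\vert_L$ of value less than $3/2$ form a discrete, hence at most countable, subset of~$L$, the bad set in $X_-$ is at most countable; as $X_-$ is uncountable, a generic $p\in X_-$ is good, and its backward orbit supplies the sought point of $L_S\cap\W^s(\nabla,\partial_-M)$.
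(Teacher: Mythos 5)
Your proof is correct and follows essentially the same route as the paper's: choose a point of $L\cap\partial_-M$ avoiding the countable, locally finite family of one-dimensional unstable manifolds of the index-$1$ leafwise critical points, flow it backward up to the level $3/2$, and invoke the faithfulness of $S$ (Corollary \ref{faithful_cor}) to identify $L\cap S$ with the leaf of $\F\vert_S$. Your extra care about the limit point $c^*$ lying in $L$ itself (rather than merely in $\overline L$) is a legitimate point that the paper leaves implicit, and your justification, though loosely worded, is sound since an orbit tangent to $\F$ that converges must eventually stay in a single plaque of a foliation chart around its limit.
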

\begin{proof}
Fix a point $x\in S$; let $L_x$ be the leaf of $\F$ through $x$ in $M$. On the one hand,
 $\nabla$ exits $M$ through $\partial_-M$ which meets $L_x$ by hypothesis.
  On the other hand, in
 $L_x$ which is of dimension $\ge 2$,
 the unstable manifolds of the index-$1$ critical points of $f\vert_{L_x}$
  form only a denumerable (and even locally finite)
  family of $1$-dimensional
 orbits. Hence,
 $\W^s(\nabla,\partial_-M)\cap L_x$ intersects $S$ in a point $y$. Finally,
 $x$ and $y$ both lie on $L_x\cap S$, which is a single leaf of $\F\vert_S$
 since the level set $S$ is faithful (Corollary \ref{faithful_cor}).
 \end{proof}

\begin{proof}[Proof of Proposition \ref{cancellation_pro}]

 For every $c\in C$ and $v\in\R$, by a \emph{bridge arc} for $c$ at level $v$, we mean
  an embedding $a$ of the interval $\I=[0,1]$ into $f\mun(v)$,
   tangential to $\F$, such that $\omega(a(0))=c$
   and $\omega(a(1))\in \partial_-M$.
    We say that the bridge arc $a$ is \emph{pointed}
    if moreover $v=3/2$ and $a(1/2)=\sigma(c)$.
   
   We are first interested in the level
   $v=3/2$. For every $c\in C$, a 
   pointed 
   bridge arc for $c$ exists by Lemma 
 \ref{stable_lem_2}.
 Then, pushing this
 arc into the neighborhing leaves  of $\F\vert_S$, one obtains,
 over some small open neighborhood $V_c$ of $c$ in $C$,
 a choice of a pointed 
 bridge arc for every $c'\in V_c$, depending smoothly on $c'$.
  Identifying $C$ with $\R/\Z$,
  let $\delta>0$ be a Lebesgue number for the open cover $(V_c)$.
  Choose a subdivision of the circle $C$ into $\ell$ intervals $[c_{i-1},c_{i}]$
 ($1\le i\le\ell$, $c_0=c_\ell$) of length less than $\delta$. Then, fix
  $\epsilon>0$ so small that for each $i$: $$2\epsilon<
  \vert c_{i}-c_{i-1}\vert<\delta-2\epsilon$$
  Put $I_i=[c_{i-1}-\epsilon,c_{i}+\epsilon]\subset C$. Choose for every $c\in I_i$
  a pointed 
  bridge arc $t\mapsto a_i(c,t)$, the map $a_i$ being a smooth
 embedding $I_i\times\I\hookrightarrow S$.

For every subset $X\subset M$, let $\nabla^+(X)$ denote the set of the points
$\nabla^t(x)$ for $x\in X$ and $t\ge 0$ (wherever defined).
 The cancellation method will modify $f$ and $\nabla$,  in $M$,
close to the squares $a_i(I_i\times\I)$
and  $\nabla^+(a_i(I_i\times 0))$
and  $\nabla^+(a_i(I_i\times 1))$ ($1\le i\le\ell$).
Since these squares can intersect each other for different values of $i$,
 we take some previous cautions so that the modifications don't interfer with each other.
Fix a small compact interval $J\subset\R$ centered at $3/2$, so small that $J$ contains
 no leafwise critical value of $f$; fix $\ell$ 
 values $v_i\in J$ ($1\le i\le\ell$), two by two distinct;
 let $\pi_i:f\mun(J)\to f\mun(v_i)$ be the projection along the flowlines of $\nabla$;
 let $\tilde a_i=\pi_i\circ a_i$.
Thus, for each $i$:
  \begin{itemize}
\item For every $c\in I_i$, the arc $t\mapsto \tilde a_i(c,t)$ is
a bridge arc for $c$ at level $v_i$;
\item The arc $I_i\to f\mun(v_i):c\mapsto\tilde a_i(c,1/2)$ extends to a global section
$\sigma_i:C\to f\mun(v_i)$ of $\omega$ over $C$.
\end{itemize}
(Namely, $\sigma_i=\pi_i\circ\sigma$).
These two properties are stable by any small enough isotopy of the embedding
$\tilde a_i$ in $f\mun(v_i)$ tangentially to $\F\vert_{f\mun(v_i)}$.
  Since $m\ge 5$,  after a generic such perturbation for each $i$,
 we can arrange that
   for every $1\le i<j\le\ell$ and $c\in I_i\cap I_j$, 
   the two flow lines $\nabla^+(\tilde a_j(c\times\partial\I))$ are disjoint from
   the arc $\tilde a_i(c\times\I)$.

Next, for {each $1\le i\le\ell$,}
 modify $f$ in a small neighborhood of $\sigma_i(C)$ in $M$
 by introducing,  for every $c\in C$, in the function $f\vert_{L_c}$,
 close to $\sigma_i(c)$ in the leaf $L_c$, and
 slightly above the level $v_i$,
a pair of critical points $s_i^1(c)$, $s^i_2(c)$ of respective indices $1$, $2$,
in cancellation position.  (Figures \ref{creation_fig} and \ref{cerf_fig} (a))

\begin{figure}
\includegraphics*[scale=0.5, angle=-90]{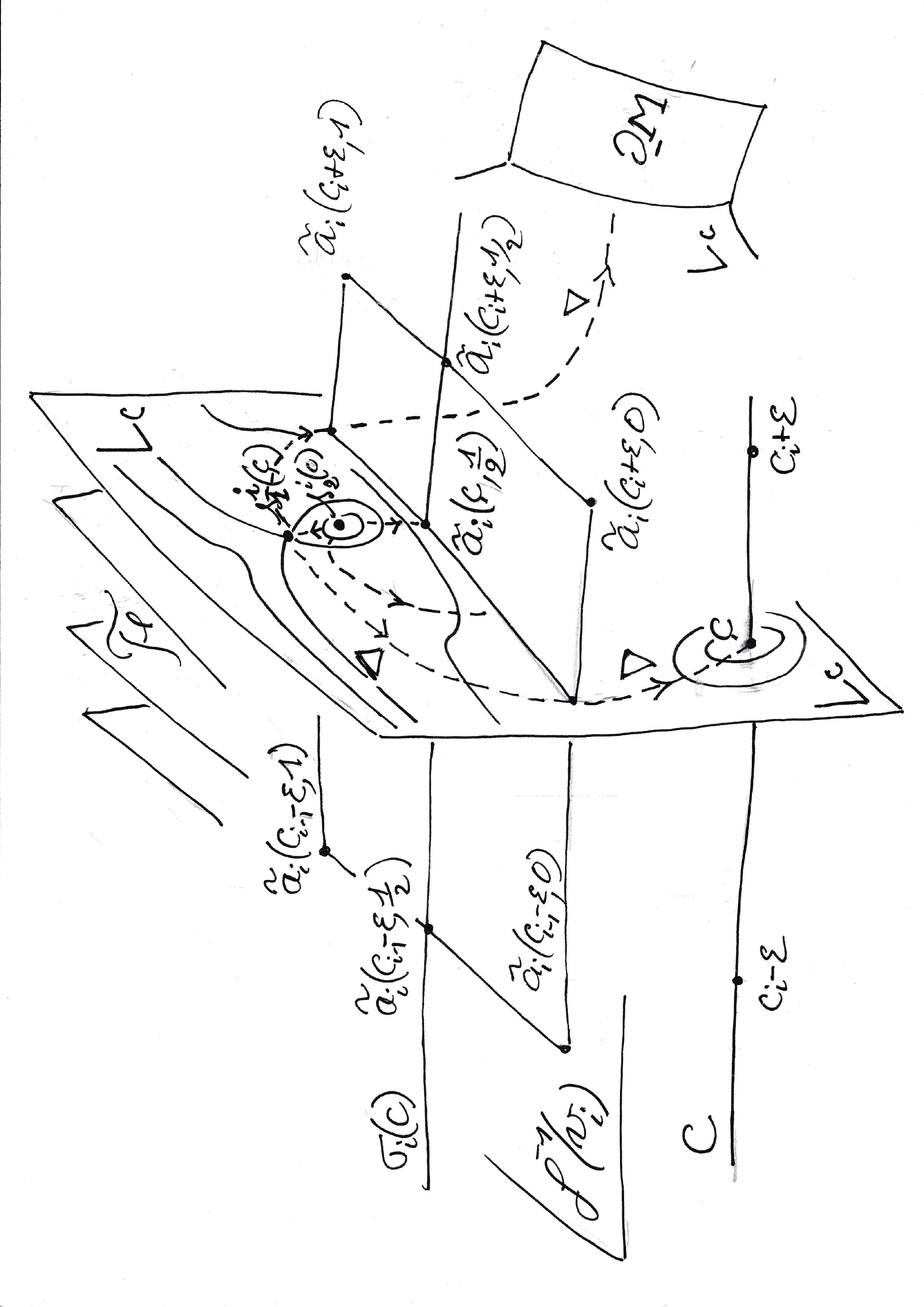}
\caption{
 Creation of two circles of leafwise critical points of respective indices $1$, $2$.
Beware that the ambiant dimension $m=3$, which this figure evokes, is excluded in the text.
Of course, for $m\ge 4$, the point $s_2^i(c)$ is not a local extremum in the leaf $L_c$.}
\label{creation_fig}
\end{figure}

\begin{figure}
\includegraphics*[scale=0.45, angle=-90]{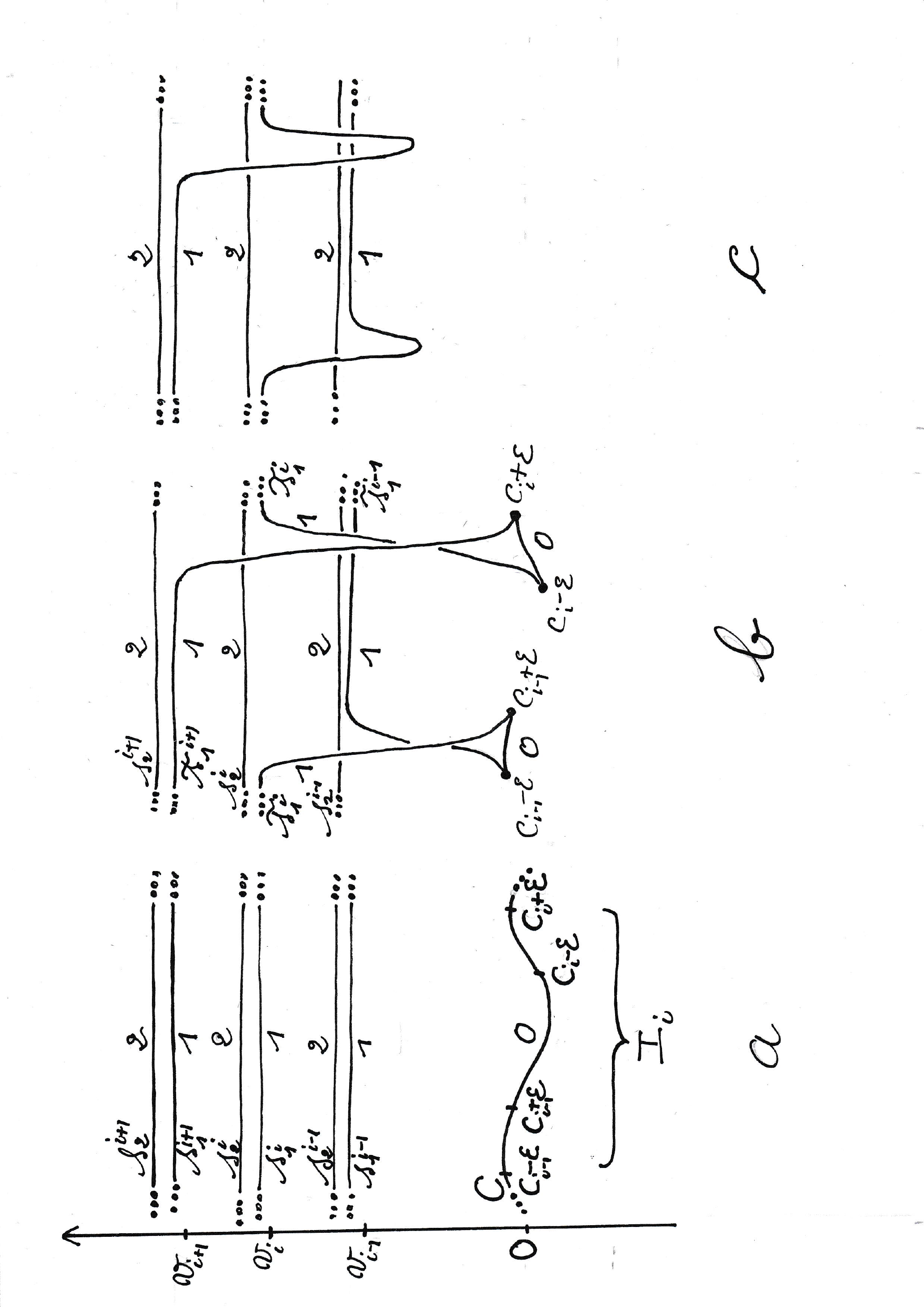}
\caption{
A somehow round Cerf diagram for the cancellation of leafwise local extrema.}
\label{cerf_fig}
\end{figure}

  For a suitable new leafwise pseudo-gradient (still denoted by $\nabla$),
 for every $c\in I_i$
  one has   (Figure \ref{creation_fig})
   $$\W^u(\nabla,s^i_1(c))\cap f\mun(v_i)=\tilde a_i(c\times\partial\I)$$
  $$\W^u(\nabla,s^i_2(c))\cap  f\mun(v_i)=\tilde a_i(c\times Int(\I))$$
  In particular, one of the two branches of the unstable manifold of $s^i_1(c)$
   descends to $c$, and the other descends to $ \partial_-M$.

   Since moreover the values of $f$ on $\partial_-M$ are less than the values of $f$ on $C$, 
    the \emph{parametric Morse cancellation lemma}
  applies to the pairs of {critical points} $(c,s^i_1(c))$  for $c\in[c_{i-1}+\epsilon,
  c_i-\epsilon]$.
   As a result, the function $f$ is modified in a small neighborhood
   of the square $$\W^u(\nabla,s^i_1([c_{i-1}+\epsilon,
  c_i-\epsilon]))\cup[c_{i-1}+\epsilon,
  c_i-\epsilon]$$ 
    so that the pairs $(c,s^i_1(c))$ are cancelled;
     but we may arrange that $f$ remains unchanged close to $\partial_-M$.
     
          Once these cancellations have been performed for every $1\le i\le\ell$,
     the resulting function is
 of course not leafwise Morse any longer.  
   On the way, at the point  $c_i - \epsilon$ (resp.  $c_{i-1} + \epsilon$),
    a birth (resp. death) critical point has been created in its leaf;
   so that,  instead of the original index-$0$ leafwise critical
 circle $C$, one now has  (see Figure \ref{cerf_fig} b) for each $i=1,\dots,\ell$~:

 \begin{itemize}
  \item An open arc $(c_i-\epsilon,c_i+\epsilon)$ of index-$0$ leafwise
  critical points;
  \item An open arc of leafwise index-$1$
  critical points $\tilde s^i_1(c)\in L_c$
   ($c\in(c_i-\epsilon,c_{i-1}+\epsilon)$),
  such that $\tilde s^i_1(c)=s^i_1(c)$ on a neighborhood of $[c_i+\epsilon,c_{i-1}-\epsilon]$;
  \item An index-$2$ leafwise
  critical circle $s^i_2(C)$.
  \end{itemize}

 The resulting Cerf diagram looks like Figure \ref{cerf_fig} (b),
with $\ell$ swallowtails.

Once the leafwise pseudogradient has been modified accordingly,
 one of the two branches of the unstable manifold of
 $\tilde s^i_1(c)$ descends to $c$, for every $1\le i\le\ell$ and
  $c\in(c_i-\epsilon,c_{i-1}+\epsilon)$.
Hence, for each $1\le i\le\ell$, and for
 $c\in[c_i-\epsilon,c_i+\epsilon]$,
  the family of triples of leafwise critical points $c$, $\tilde s^i_1(c)$,  $\tilde s^{i+1}_1(c)$
  of respective indices $0$, $1$, $1$
 matches the hypotheses of the \emph{elementary swallowtail lemma} 
  (Lemma 3.5
 of \cite{laudenbach_14}).
 We modify the function according to this lemma: the index-$0$ leafwise critical points
 vanish.  (Figure \ref{cerf_fig}, (c)).
  
  Once the $\ell$ swallowtails have been so cancelled, the resulting function is
  leafwise Morse again, and has got, instead of
   the original circle $C$ of index-$0$ leafwise critical points,
   $\ell$ more circles of index-$2$ leafwise critical points,
   and one more circle (covering  $\ell$ times the circle $C$) of index-$1$ leafwise critical points. The proof of Proposition \ref{cancellation_pro} is complete.
\end{proof}

\subsection{Orienting the stable and unstable manifolds}\label{orient_ssec}

We shall need in Section \ref{cobordism_sec} yet another normalization lemma
falling to the Morse theory of foliations. Consider, on an orientable manifold $M$ of dimension $m\ge 3$, a coorientable codimension-$1$ foliation $\F$, a leafwise Morse function $f$, and a connected component $C$ of the critical locus of $f$, of some nonextremal index $1\le i\le m-2$. 

\medskip
There are of course, up to isomorphism, exactly two dimension-$i$ real vector bundles
 over the circle~: the trivial one and the nonorientable one. The unstable manifold $\W^u(C)$
  with respect to any leafwise pseudo-gradient $\nabla$ is isomorphic to one of them,
  not depending on the choice of $\nabla$. Since $\F$ is tangentially orientable, the
   orientability of $\W^u(C)$ is equivalent to that of $\W^s(C)$.

\begin{pro}\label{orientation_pro} Assume that the stable and unstable manifolds of $C$ are not orientable.

Then, there exists a leafwise Morse function $g$ on $M$, coinciding with $f$ outside some arbitrarily small neighborhood $T$ of $C$, and whose leafwise critical locus in $T$ consists of two critical circles, one of index $i$ and one of index $i+1$ (or $i-1$ if one prefers); both of which have orientable stable and unstable manifolds.
\end{pro}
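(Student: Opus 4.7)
My plan is to construct $g$ by a local modification of $f$ in a tubular neighborhood $T$ of $C$, using a parametric birth-plus-handle-slide that destroys $C$ and replaces it with two new critical circles $C_1$ of index $i$ and $C_2$ of index $i+1$, both with trivial normal structure.

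\textbf{Setup.} Since $C$ is transverse to $\F$, choose $T$ so small that $\F\vert_T$ is the bundle foliation of a disc bundle $\pi\colon T\to C$. By the parametric Morse lemma, on each of two trivializing arcs $U_0,U_1$ covering $C$ there are coordinates $(\theta,x,y)$, with $x\in\R^i$, $y\in\R^{m-1-i}$, in which $f=a(\theta)-|x|^2+|y|^2$. The hypothesis that $\W^u(C)$ and $\W^s(C)$ are non-orientable means that on one of the two arcs $J_\pm$ of $U_0\cap U_1$, say $J_+$, the transition function acts on the $x$- and on the $y$-subspaces as two orientation-reversing reflections (the other arc $J_-$ may be taken trivial); the product of these reflections has determinant $+1$ since $M$ is orientable.

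\textbf{Modification.} Concentrate all changes in an arbitrarily small neighborhood $N$ of $J_+$ in $T$. There, I perform three parametric moves, analogous to those used in Proposition~\ref{cancellation_pro}: first, a parametric \emph{birth} at one end of $J_+$ creates a pair $(p_i,p_{i+1})$ of critical points of indices $i$ and $i+1$ from a cusp in the fibre; second, a parametric \emph{handle slide} arranges that the unstable trajectory of $p_{i+1}$ aligns with the original index-$i$ critical point of $C$, so that the two come into cancellation position; third, a parametric \emph{death} at the other end of $J_+$ cancels the original $C$-arc against $p_{i+1}$. By packaging these as opposite ends of a closed-up swallowtail configuration (like the swallowtail cancellations of Proposition~\ref{cancellation_pro}), one arranges that the $p_{i+1}$-arc closes up into a smooth critical circle $C_2$ of index $i+1$ with no remaining cusps, while the remaining arc of $C$ (over $C\setminus N$) and the new $p_i$-arc fuse into a smooth index-$i$ critical circle $C_1$.

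\textbf{Orientability and main obstacle.} The handle slide is set to contribute exactly the monodromy reflection missing from the trivialization, so that $\W^u(C_1)$ has the original M\"obius monodromy composed with this reflection --- which is trivial; similarly $\W^s(C_1)$ is orientable. The circle $C_2$ sits within a single trivializing chart and inherits trivial bundles automatically. The principal obstacle is the parametric-Morse bookkeeping that ensures the new critical set is smooth (no residual cusps or birth-death points) and consists of exactly the two circles advertised; this is a foliated variant of the one-parameter Cerf manipulations already invoked in the proof of Proposition~\ref{cancellation_pro}, and the required generic transversality is achievable by small perturbations in the space of leafwise pseudo-gradients.
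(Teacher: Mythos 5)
There is a genuine gap here, on two counts. First, the localization. You concentrate all modifications in an arbitrarily small neighborhood $N$ of a single arc $J_+$ of $C$. But the critical locus of a leafwise Morse function is a closed $1$-manifold transverse to $\F$, so any new critical circle of index $i+1$ must project onto $C$ as a covering of degree at least one; it cannot be contained in $N$, which fibers over an interval (an immersion of $\S^1$ into an interval is impossible). Since $g=f$ outside $N$ and $f$ has no index-$(i+1)$ points near $C$ there, the index-$(i+1)$ locus you create over $J_+$ is forced to be an arc with two birth--death (fold) endpoints, i.e.\ $g$ fails to be leafwise Morse. Such fold points are \emph{stable} in $1$-parameter families and cannot be removed by the ``small perturbations in the space of leafwise pseudo-gradients'' you invoke; eliminating them is exactly the hard content that the swallowtail manipulations in the proof of Proposition~\ref{cancellation_pro} deal with, and there the moves are distributed around all of $C$ and the resulting circles cover $C$ several times. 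Second, the orientability mechanism is not valid: a handle slide is a choice of pseudo-gradient and cannot alter the monodromy of $\W^u$ along a critical circle, which is determined by the leafwise Hessian of $g$ along that circle; so ``setting the handle slide to contribute the missing reflection'' does not make $\W^u(C_1)$ orientable.

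The paper's proof avoids Cerf theory entirely and works globally over $C$. The parametric Morse lemma gives on $T\cong(\R/\Z)\times\D^2\times\D^{m-3}$ the normal form $f(t,x,y,z)=f(t,0,0,0)+q(R^{t\pi}(x,y))+Q(z)$, so the nonorientability is realized as a continuous rotation by $\pi$ of a distinguished $2$-plane spanned by one unstable and one stable direction. One then writes an explicit perturbation $\phi_\varepsilon$ of $q+Q$, invariant under $(x,y)\mapsto(-x,-y)$ (hence descending to the twisted bundle), equal to $q+Q$ near the boundary, and having three critical points: one of index $i+1$ at the origin and two of index $i$ at $(0,\pm c_\varepsilon,0)$. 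Fibrewise this yields exactly two circles: the index-$(i+1)$ circle is the zero section, whose unstable bundle contains the whole rotating $2$-plane, so its monodromy there is $-\mathrm{id}$, of determinant $+1$; the index-$i$ circle is a connected \emph{double} cover of $C$ (the rotation swaps $\pm c_\varepsilon$), so its monodromy is the square of the original, hence trivial. If you want to keep a Cerf-theoretic version, you would at least have to let the birth and death arcs wind around all of $C$ and then cancel the two residual folds against each other --- which essentially reconstructs this picture.
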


\begin{proof} We prove the $i+1$ case. Replacing $f$ by $-f$ implies the $i-1$ case.

Let $x$, $y$, $z_1$, \dots, $z_{m-3}$ denote the standard coordinates on $\R^{m-1}$. Consider the quadratic form $q(x,y) = (-x^2+y^2)/2$ on $\R^2$, and on $\R^{m-3}$
 a nondegenerate quadratic form $Q$ of index $i-1$. Let $R^\theta$ denote the rotation of angle $\theta$ in the $(x,y)$-plane.

Since $M$ is orientable but the stable and unstable manifolds of $C$ are not, the \emph{parametric Morse lemma} implies that $C$ admits a compact tubular neighborhood $T\cong
(\R/\Z)\times\D^2\times\D^{m-3}$ in $M$ on which $f$ has the form~:
$$f(t,x,y,z) = f(t,0,0,0) + q(R^{t\pi}(x,y)) + Q(z).$$
Let $b : \R \to [0,1]$ be a smooth even function with support $[-1,1]$ such that $b(y)=1-y^2/2$ near $y = 0$ and the derivative $b'$ is negative on $(0,1)$ and has a unique critical point there, where it reaches its minimum. Hence, for $0<\varepsilon<1$, the function
 $$y \mapsto y^2/2 + \varepsilon b(y/\varepsilon)$$ 
is Morse on $\R$ with three critical points $-c_\varepsilon < 0 < c_\varepsilon$ of respective indices $0$, $1$, $0$. Let $\rho : \R^+ \to [0,1]$ be a smooth function with support in $[0,1)$ such that $\rho=1$ near $0$. Define $\phi_\varepsilon : \D^2\times\D^{m-3} \to \R$~by
$$\phi_\varepsilon(x,y,z) = q(x,y) + \varepsilon \rho(x^2+y^2+\vert z\vert^2) \, b(y/\varepsilon) + Q(z).$$
It enjoys the following properties~:
\begin{itemize}
\item $\phi_\varepsilon(-x,-y,z)=\phi_\varepsilon(x,y,z)$.
\item For $\varepsilon>0$ sufficiently small, the function $\phi_\varepsilon$ has exactly three critical points~: $0$ of index $i+1$, and $(0, \pm c_\varepsilon,0,\dots,0)$ of index~$i$.
\item $\phi_\varepsilon(x,y,z)=q(x,y)+Q(z)$ on a neighborhood of $\partial(\D^2\times\D^{m-3})$.
\end{itemize}

Fix such an $\varepsilon > 0$ and set
$$g(t,x,y,z)=f(t,0,0,0)+\phi_\varepsilon(R^{t\pi}(x,y),z).$$
The function $g$ is leafwise Morse on $T$ and coincides with $f$ near $\partial T$. It has two leafwise critical circles in $T$ of respective indices $i$ (covering twice the original $C$) and $i+1$, whose stable and unstable manifolds are orientable.
\end{proof}

\section{Making foliated cobordisms conformally symplectic}\label{cobordism_sec}

Here, we deduce from the tools developed in the previous section and from the symplectization theorem for cobordisms (\cite{eliashberg_murphy_20}), a foliated version of the latter.

Let $W$ be a compact
 manifold of dimension $2n+1\ge 5$, whose smooth boundary
 $\partial W$ is splitted into two disjoint nonempty compact
  subsets $\partial_\pm W$. Let $\F$ be on $W$
   a cooriented codimension-$1$ foliation, transverse to
 $\partial W$. One has the induced foliations  $\partial\F=\F\vert_{\partial W}$
and  $\partial_\pm\F=\F\vert_{\partial_\pm W}$. For every leaf $L$ of $\F$,
put $\partial_\pm L=L\cap\partial_\pm W$.

Recall Definition \ref{ac_dfn}.
 A leafwise $2$-form $\omega\in\Omega^2(\F)$ is of course \emph{nondegenerate}
 if $\omega^n$ does not vanish.
 Such a form defines a leafwise orientation on $\F$, hence also on $W$ and on $\partial W$.
  
 Let  $\eta$ be a leafwise $1$-form on $\F$ which
  is {closed} ($d_\F\eta=0$). Then, every $\theta\in\Omega^*(\F)$
 has a \emph{leafwise Lichnerowicz differential} with respect to $\eta$
  $$d_\eta\theta=d_\F\theta-\eta\wedge\theta$$ 
  Just as in the nonfoliated case,
  $d_\eta^2=0$, hence the differential operator $d_\eta$ on $\Omega^*(\F)$
  defines some Novikov leafwise cohomology groups. We are interested in the relative ones: precisely,  $H_\eta^*(\F,\partial\F)$ is the cohomology of 
  $\Omega^*(\F) \times \Omega^{*-1}(\partial \F)$ under the differential operator
   $$(\theta,\theta')\mapsto(d_\eta\theta,\theta\vert_{\partial\F}-d_\eta\theta')$$

 \begin{TB} Let $W$, $\partial_\pm W$, $\FF$, 
 $\omega$, $\eta$ be as above. Let $a\in H_\eta^2(\F,\partial\F)$.
  Assume that $\F$ is taut (Definition \ref{taut_dfn}) and that
every leaf of $\F$ meets both $\partial_+W$ and $\partial_-W$.

 Then, there exist $\varpi\in\Omega^2(\F)$ and $\alpha\in\Omega^1(\partial\F)$
  such that
   \begin{itemize}
 \item $d_\eta\varpi=0$, and $\varpi\vert_{\partial\F}=d_\eta\alpha$;
 \item $(\varpi,\alpha)$ lies in the relative Novikov leafwise cohomology class $a$;
  \item  $\varpi$ is nondegenerate and
  homotopic
  to $\omega$ among the nondegenerate $2$-forms on $\F$;
 \item  $\alpha$ is a negative (resp. positive)
  overtwisted contact form on every leaf of $\partial_+\F$ (resp. $\partial_-\F$).
  \end{itemize}

 \end{TB}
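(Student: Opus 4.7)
The plan is to mimic, leafwise and with parametric dependence on the leaf, the proof of Theorem \ref{general_thm}, the foliated Morse theory of Section \ref{morse_sec} supplying the essential new input. First, since $\F$ is taut and every leaf meets both $\partial_\pm W$, Proposition \ref{cancellation_pro} yields a nearly self-indexing leafwise Morse function $f : W \to \R$ with no leafwise local extremum in $\Int(W)$ and satisfying $\partial_\pm(W,f) = \partial_\pm W$. Applying Proposition \ref{orientation_pro} finitely many times ensures moreover that every connected component $C_j$ of $\crit(f,\F)$, of index $i_j \in \{1,\dots,2n-1\}$, has orientable stable and unstable manifolds with respect to a chosen globally Kupka--Smale leafwise descending pseudo-gradient $\nabla$ for $f$.

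Next, I would build local leafwise $\eta$-Liouville forms near each critical circle and near the boundary. Orientability of $\W^s(C_j)$ and $\W^u(C_j)$ combined with a parametric Morse lemma provides a cornered tubular neighborhood $H_{C_j} \cong C_j \times H_{c_j}$ whose intersection with each leaf of $\F$ is a locally finite disjoint union of contractible standard index-$i_j$ cornered handles. Consequently $\eta|_{H_{C_j}}$ is leafwise exact, and using the leafwise analog of Remark \ref{cc_rmk} I replace $\eta$ by a cohomologous leafwise closed $1$-form vanishing on $\nb(H_{C_j})$. The Eliashberg--Murphy symplectization of cobordisms \cite{eliashberg_murphy_20}, applied with parameter $\S^1 \cong C_j$, then supplies a leafwise (genuine) Liouville form $\lambda_{C_j}$ on $\nb(H_{C_j})$ whose leafwise dual is positively colinear to $\nabla$ along $\partial H_{C_j}$ and whose restriction to each piece of $\partial_\pm H_{C_j}$ is a leafwise overtwisted contact form. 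In parallel, fix near $\partial W$ a vector field $Z$ transverse to $\partial W$ with the correct sign on each $\partial_\pm W$; the parametric overtwisted h-principle of \cite{borman_eliashberg_murphy_15} provides on $\partial\F$ a leafwise overtwisted contact form $\beta$ in the almost contact class determined by $(\iota_Z\omega|_{\partial\F}, \omega|_{\partial\F})$ (Remark \ref{lwctctandnd}), and a leafwise version of Lemma \ref{derivate_lem}(ii) extends $\beta$ to a leafwise $\eta$-Liouville form $\lambda_\partial$ on $\nb(\partial W)$, with sign matching the desired conclusion on each component of $\partial_\pm\F$.

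On the complement $W' := W \setminus \Int(\bigcup_j H_{C_j})$ the function $f$ is leafwise regular, so the leafwise level hypersurfaces of $f$ assemble into a codimension-$1$ subfoliation $\GG$ of $\F|_{W'}$. A parametric leafwise version of Lemma \ref{even_contact_lem}(ii), with $\GG$ playing the role of the ambient auxiliary codimension-$1$ foliation and with the already-built $\lambda_{C_j}$'s and $\lambda_\partial$ as boundary data, produces on $W'$ a leafwise even contact structure $\varepsilon$ whose leafwise characteristic foliation is transverse to $\GG$ and leafwise overtwisted. After adjusting $\nabla$ on $\Int(W')$ so that it positively spans that characteristic foliation, a partition of unity assembles a global leafwise $1$-form $\lambda \in \Omega^1(\F)$ representing $\varepsilon$ on $W'$ and matching the local models near the handles and the boundary. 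Copying the plateau-function argument from the proof of Theorem A, I then construct $g : W \to \R$ locally constant on $\bigcup_j H_{C_j}$ and on $\partial W$, with $\nabla \cdot g < 0$ on $\Int(W')$ and large enough that, by Lemma \ref{derivate_lem}(ii), the rescaled form $\mu := e^{-g}\lambda$ is leafwise $\eta$-Liouville throughout $W$; by construction $d_\eta \mu$ is leafwise homotopic to $\omega$ through leafwise nondegenerate $2$-forms. To realize the prescribed class $a \in H^2_\eta(\F, \partial\F)$, pick any $D_\eta$-closed representative $(\varpi_0, 0)$ of $a$ and set $\varpi := \varpi_0 + K d_\eta \mu$, $\alpha := K\mu|_{\partial\F}$ for $K$ large enough to preserve leafwise nondegeneracy.

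The main obstacle will lie in justifying the parametric/foliated forms of the Eliashberg--Murphy symplectization and of Lemma \ref{even_contact_lem}(ii): both must run the Moser--Gray argument and the overtwisted h-principle uniformly along every leaf of $\F$ and along each critical circle while preserving leafwise overtwistedness. Proposition \ref{orientation_pro} is indispensable precisely because it trivializes the $\S^1$-bundles around each critical locus so that the parametric input from \cite{eliashberg_murphy_20} and \cite{borman_eliashberg_murphy_15} can be applied uniformly.
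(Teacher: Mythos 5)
Your architecture coincides with the paper's: reduce to $a=0$, use Propositions \ref{cancellation_pro}, \ref{orientation_pro} and \ref{ordered_pro} to get a leafwise Morse function without interior leafwise extrema and with orientable (un)stable manifolds, symplectize the handle neighborhoods of the critical circles via \cite{eliashberg_murphy_20}, build a leafwise even contact structure on the complement via \cite{borman_eliashberg_murphy_15} and a Gray-type argument, glue with a partition of unity, rescale by $e^{-g}$, and finish with the affine trick $\varpi=\varpi_0+Kd_\eta\mu$. However, the point you defer as ``the main obstacle'' --- justifying \emph{parametric} versions of the Eliashberg--Murphy symplectization and of the even-contact extension --- is exactly the step the paper closes by a specific device, and as written your proof has a genuine hole there: no parametric version of \cite{eliashberg_murphy_20} is available in the literature, so invoking it ``with parameter $\S^1\cong C_j$'' is not a proof.

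The paper's resolution is twofold and worth recording. Near a critical circle $C$, the neighborhood is $H_C\cong\S^1\times H_i$ with the slice foliation; after arranging $\eta=0$ there, one uses that $SO(2n)/U(n)$ is simply connected to homotope $\omega$ so that $\omega\vert(s\times H_i)$ is \emph{independent of} $s\in\S^1$. One then applies the ordinary, non-parametric symplectization of cobordisms to the single slice $H_i$ and pulls the resulting Liouville form back under the projection $\S^1\times H_i\to H_i$; no parametric statement is needed. On the complement $W'$, no new ``parametric leafwise version'' of Lemma \ref{even_contact_lem} is required either: the leafwise level sets of $f$ form an honest (codimension-$2$) foliation $\LL$ of $W'$, and Theorem 1.5 of \cite{borman_eliashberg_murphy_15} is already stated for leafwise overtwisted contact structures on arbitrary foliations, so one applies it directly to $\LL$ (with boundary data given by the $\lambda_C$'s), and the Moser--Gray correction producing the vector field $Z'=Z+X$ is a pointwise-leafwise computation identical to the one in the proof of Lemma \ref{even_contact_lem}. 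With these two substitutions your argument becomes the paper's proof; without them, the central analytic input remains unjustified.
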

  In particular, on every leaf $L$ of $\F$, the $2$-form
  $\varpi\vert_L$ is $\eta$-symplectic; and
 $\partial_+ L$ (resp. $\partial_-L$) is
  of concave (resp. convex)
  overtwisted contact type (Definition \ref{twisted_dfn_3}
  and Lemma \ref{hypersurface_lem})
  with respect to $\varpi\vert_L$. 
 \begin{rmk} Here, $\eta$ may be $d_\F$-exact, or even vanish identically.
 \end{rmk}
 
 \begin{proof}[Proof of Theorem $B$] The proof
  is essentially a ``foliated'' version of parts of the above
  proof of Theorem $A$, using the tools elaborated in
   Section \ref{morse_sec}. We begin with the case $a=0$. So, we are
   actually looking
   for a \emph{ leafwise $\eta$-Liouville form} on $\F$: a $\lambda\in\Omega^1(\F)$
   such that $d_\eta\lambda\in\Omega^2(\F)$ is nondegenerate.

  \medbreak
  Applying Propositions \ref{cancellation_pro},  \ref{orientation_pro} and \ref{ordered_pro},
one makes on $W$ a leafwise
Morse function $f$ (Definition \ref{lm_dfn}) such that
\begin{itemize}
\item  $\partial_-(W,f)=\partial_-W$
and $\partial_+(W,f)=\partial_+W$;
\item $f$ has no leafwise local extrema in $Int(W)$;
\item The  stable and unstable manifolds  of
$\crit(f,\F)$ are orientable.
\end{itemize}
Choose a leafwise pseudo-gradient $Z$ for $f$ on $W$ (Definition \ref{pdg_dfn}).

  \medbreak
\emph{Leafwise symplectization close to the critical locus ---}
  Thanks to the orientability of the (un)stable manifolds,
  every connected component $C$ of the critical locus of index $1\le i\le 2n-1$
   admits in $W$ a small compact 
  neighborhood $H_C$ which is a topological solid torus
  with (convex) cornered boundary, as follows. $H_C$ is diffeomorphic with
  $\S^1\times H_i$ where
  $H_i$ is $\D^i\times\D^{2n-i}$ minus a small open
  tubular neighborhood of the corner $\S^{i-1}\times\S^{2n-i-1}$;
  and the boundary splits as
  $$\partial H_i=\partial_+H_i\cup\partial_0H_i\cup\partial_-H_i$$
 where (writing $\partial_+H_C$ for $\S^1\times\partial_+H_i$
 and  $\partial_0H_C$ for $\S^1\times\partial_0H_i$ and
  $\partial_-H_C$ for $\S^1\times\partial_-H_i$):
  \begin{itemize}
  \item $\F\vert_{H_C}$ is the slice foliation parallel to the factor $H_i$;
 \item
 $f$ is leafwise constant on $\partial_+H_C$ and on $\partial_-H_C$;
 \item $Z$ enters (resp. exits) $H_C$  transversely through $\partial_+H_C$
 (resp. $\partial_-H_C$);
 \item  $Z$ is tangential to the $\I$ factor on
  $$\partial_0H_C\cong\S^1\times\S^{i-1}
 \times\S^{2n-i-1}\times\I$$
 \end{itemize}

 Since the leaves of $\F\vert_{H_C}$ are topological disks, $\eta$
 admits a $d_\F$-primitive $u$ on $H_C$. Extending $u$ to a smooth function over $W$,
 and  changing
  $\eta$ to $\eta-d_\F u$ on $W$,  we can without loss of generality
  arrange that $\eta=0$ on $\nb_W(H_C)$ (here we use an obvious foliated version of
  Remark
  \ref{cc_rmk}).
 Also, $U(n)/SO(2n)$ being simply connected,
 after a homotopy of $\omega$, we can arrange that $\omega\vert(s\times H_i)$
 does not depend on $s\in\S^1$.
  Hence, in $H_C$, we actually look for a \emph{genuine} Liouville form on a single slice $H_i$. 
 Such a form is given by
  the symplectization of cobordisms
  \cite{eliashberg_murphy_20}. One gets
   a leafwise Liouville form $\lambda_C$ on $\nb_W(H_C)$, positive with respect
   to the orientation of the leaves by $\omega^n$; and whose
 dual Liouville vector field is positively colinear to
  $Z$ on $\nb_{W}(\partial H_C)$; moreover,
 $\lambda_C$ induces an overtwisted
 contact structure on every leaf of $\F\vert_{\partial_\pm H_C}$.
  \medbreak
  
\emph{Construction of a leafwise even contact structure away from the critical locus ---}
For short, put $H=\cup_CH_C$ and write $\lambda_H$ for
the leafwise $1$-form equal to $\lambda_C$ on each $H_C$.

 Consider the codimension-$2$ foliation $\LL$
of $W'=W\setminus Int(H)$ by the level hypersurfaces of $f$ in the leaves of $\F$,
cooriented by $df$.
Rescale $Z$ such that $Z\cdot f=-1$ on $W'$.

  The pair $(\iota_Z\omega,\omega)$ restricts, on $\LL$, to
  a leafwise almost contact structure (Definition \ref{ac_dfn}).
  The h-principle for overtwisted contact structures
  on foliations
(\cite{borman_eliashberg_murphy_15}, Theorem 1.5) provides for the foliation $\LL$
 a leafwise contact, cooriented $(2n-2)$-plane field $\xi\subset T\LL$ such that
 \begin{itemize}
 \item $\xi$ lies in the leafwise almost
contact class of  $((\iota_Z\omega)\vert_\LL,\omega\vert\LL)$;
 \item $\xi$ is an overtwisted contact structure in every leaf of
$\LL$;
\item $\xi$ coincides with $\ker(\lambda_H)\cap T\LL$ near $H$.
\end{itemize}

Just like in the proof of Lemma \ref{even_contact_lem},
after the Gray stability theorem, there is a unique vector field $X$ on  $W'$,
tangential
to $\xi$, such that $Z'=Z+X$ preserves $\xi$. In particular, $X$ vanishes on some
neighborhood of $\partial H$.
Change $Z$ to $Z'$ on $W'$ and put $\varepsilon=\R Z+\xi$.
So,  in every leaf $F$ of $\F\vert_{W'}$, the hyperplane field
 $\varepsilon$ is an even contact structure (Section \ref{elements_sec}),
 represented by
 $\lambda_H\vert_F$ close to $\partial H\cap F$,
 and whose characteristic foliation is positively spanned by $Z$.

  \medbreak

   \medbreak
   \emph{Construction of a leafwise $\eta$-Liouville form ---}
By means of a partition of the unity, make  over $W$ a leafwise 1-form $\lambda\in
\Omega^1(\F)$ such that
\begin{itemize}
\item $\lambda$ represents $\varepsilon$ in each leaf of $\F\vert_{W'}$;
\item $\lambda$ coincides with $\lambda_H$ on some open neighborhood $V$ of $H$
in $W$.
 \end{itemize}

   By an easy modification of $f$ in a small neighborhood of
  $H$,
 one gets a smooth real function $g$ on $W$,
   constant on every leaf of $\F\vert H$, and such that
  $Z\cdot g<0$ on $W\setminus H$.
 After multiplying $g$ by a large enough positive constant, one arranges moreover
  that  on $W\setminus V$:
  \begin{equation}\label{g_eqn}
  Z\cdot g<\chi_\lambda-\eta(Z)
  \end{equation}

  After Lemma \ref{derivate_lem}, $\chi(\lambda)>\eta(Z)$ on $V\setminus H$.
   After Equation (\ref{derivate_eqn}) and Inequation (\ref{g_eqn}), 
  changing $\lambda$ to $e^{-g}\lambda$, one can moreover
   arrange that $\chi(\lambda)>\eta(Z)$
   on the all of
    $W\setminus H$. Hence, $\lambda$ is leafwise $\eta$-Liouville on $W\setminus H$;
    and also on $H$, being there a {leafwise locally constant} multiple
 of $\lambda_H$.
  By construction, $\varpi=d_\eta\lambda$ satisfies all the properties of Theorem $B$
 in the exact case $a=0$.

 \medbreak
   \emph{General case --- }
In order 
to obtain a leafwise $\eta$-symplectic form in a given relative cohomology class $a$,
 we proceed as in the non-foliated case: $a$ is represented by a pair $(\omega',0)$
 such that $\omega'\in\Omega^2(\F)$ is $d_\eta$-closed.
  For a large enough positive real constant $K$, the leafwise forms
   $$\varpi=\omega' + K d_\eta\lambda$$ $$\alpha=K\lambda\vert_{\partial\F}$$
    satisfy the required properties.
\end{proof}

\section{Deforming foliations into contact structures}\label{deformation_sec}

 Consider the problem of approximating a foliation
    by contact structures, which was solved in the $3$-dimensional case by Eliashberg and Thurston in their seminal monography \cite{eliashberg_thurston_98}.
  
On a compact oriented manifold $M$
 of dimension $2n+1\ge 5$, let $\F$ be
 a cooriented codimension-$1$ foliation transverse to the boundary.
 
 The simplest way to such an approximation is a so-called \emph{linear deformation}:
  that is, the foliation $\F$, being cooriented, is defined by a global non-vanishing $1$-form $\alpha\in\Omega^1(M)$; and one looks for a $1$-form $\lambda\in\Omega^1(M)$
 such that $\alpha_t=\alpha+t\lambda$ is contact for every small enough positive $t$.

  The actual geometric nature of the problem will appear through an
   elementary computation.
  
  Recall \cite{candel_00} that the integrability of $\alpha$
  amounts to the existence of a $1$-form $\eta$ on $M$
   such that $d\alpha=\eta\wedge\alpha$;
 that $\eta$ is then leafwise closed;
  the integral of $\eta$ on every tangential loop being the logarithm of
  the linear holonomy of the loop. 
  The restriction $\eta\vert_\F$ is uniquely determined by $\alpha$.
  One may call $\eta$ a \emph{holonomy form} associated to $\alpha$.
   Then, for any smooth function $F$ on $M$,
 $\eta + dF$ is a holonomy form
 associated with $e^F\alpha$.
  The cohomology class of $\eta\vert_\F$
  in $H^1(\F)$ (recall Definition \ref{ac_dfn}) thus depends only on the foliation $\F$, not on the choice of $\alpha$. 
  
  \begin{lem}\label{deformation_lem} Let $\F$, $\alpha$, $\eta$ be as above.
  If $\lambda\in\Omega^1(M)$ and if $\lambda\vert_\F$ is leafwise
  $\eta$-Liouville,
   then $\alpha+t\lambda$ is contact for every small enough positive $t$.
\end{lem}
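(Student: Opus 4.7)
The plan is a direct computation of $\alpha_t\wedge(d\alpha_t)^n$, showing that its leading term in $t$ is a nowhere-vanishing $(2n+1)$-form.

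First, I would use the integrability relation $d\alpha=\eta\wedge\alpha$ to write
$$d\alpha_t = \eta\wedge\alpha + t\,d\lambda.$$
Expanding the $n$-th power and using the crucial identity $(\eta\wedge\alpha)^2=0$, only two terms survive:
$$(d\alpha_t)^n = t^n(d\lambda)^n + n\,t^{n-1}\,\eta\wedge\alpha\wedge(d\lambda)^{n-1}.$$
Then, multiplying by $\alpha_t=\alpha+t\lambda$, the term in $\alpha\wedge\eta\wedge\alpha$ vanishes, and after reordering 1-forms I would obtain
$$\alpha_t\wedge(d\alpha_t)^n \;=\; t^n\,\alpha\wedge\Bigl[(d\lambda)^n - n(\eta\wedge\lambda)\wedge(d\lambda)^{n-1}\Bigr] \;+\; t^{n+1}\,\lambda\wedge(d\lambda)^n.$$

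The key algebraic observation, which handles the $t^n$-coefficient, is that the bracket equals $(d_\eta\lambda)^n$. Indeed, expanding $(d_\eta\lambda)^n=(d\lambda-\eta\wedge\lambda)^n$ and again using $(\eta\wedge\lambda)^2=0$ yields exactly the two surviving terms. Thus
$$\alpha_t\wedge(d\alpha_t)^n \;=\; t^n\,\alpha\wedge(d_\eta\lambda)^n + t^{n+1}\lambda\wedge(d\lambda)^n.$$

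It remains to see that $\alpha\wedge(d_\eta\lambda)^n$ is a nowhere-vanishing $(2n+1)$-form on $M$. Since the Cartan and leafwise differentials agree for forms pulled back from $M$, one has $(d_\eta\lambda)|_\F = d_\eta(\lambda|_\F)$; by hypothesis this leafwise $2$-form is nondegenerate, so $(d_\eta\lambda)^n|_\F$ is a nowhere-vanishing leafwise $2n$-form. Because $\alpha$ is a non-vanishing $1$-form defining $\F$, wedging with $\alpha$ turns $(d_\eta\lambda)^n|_\F$ into a nowhere-vanishing $(2n+1)$-form on $M$. By compactness of $M$, the second term is dominated by the first for all sufficiently small $t>0$, so $\alpha_t\wedge(d\alpha_t)^n$ remains nowhere zero and $\alpha_t$ is contact. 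The only subtlety — the main step requiring care — is the identification of the coefficient with $(d_\eta\lambda)^n$, since it is what converts the Lichnerowicz-nondegeneracy hypothesis on the leaves into nondegeneracy of the ambient contact volume form.
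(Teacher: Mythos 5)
Your computation reproduces exactly the paper's argument: the identity $\alpha_t\wedge(d\alpha_t)^n=t^n\,\alpha\wedge(d_\eta\lambda)^n+t^{n+1}\lambda\wedge(d\lambda)^n$ is precisely Equation (\ref{deformation_eqn}), and the conclusion that $\alpha\wedge(d_\eta\lambda)^n$ being a volume form suffices is the paper's closing remark. Your proof is correct and follows essentially the same route, merely making explicit the compactness argument and the identification of $(d_\eta\lambda)\vert_\F$ with $d_{\eta\vert_\F}(\lambda\vert_\F)$.
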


\begin{proof}[Proof of Lemma \ref{deformation_lem}]
  For $\theta\in\Omega^*(M)$, we use the notation $d_\eta\theta$ for $d\theta-\eta\wedge\theta$,
although $\eta$ being in general {not} globally closed on $M$,
the operator $d_\eta^2$ is not in general a differential operator on $\Omega^*(M)$:
its square vanishes \emph{in restriction to $\F$.}
   One gets straightforwardly
  
  $$(d_{\eta}\lambda)^n=(d\lambda)^n-
n\eta\wedge\lambda\wedge(d\lambda)^{n-1}$$
\begin{equation}\label{deformation_eqn}
\alpha_t\wedge(d\alpha_t)^n=t^n\alpha\wedge(d_{\eta}\lambda)^n
+t^{n+1}\lambda\wedge(d\lambda)^n
\end{equation}
Hence, a \emph{sufficient} condition for $\alpha_t$ to be
 contact for every small enough positive $t$
 is that $\alpha\wedge(d_{\eta}\lambda)^n$ be a volume form on $M$.
\end{proof}

  \begin{rmk}\label{invariance_rmk}
   The existence of such a form $\lambda$,
      and the leafwise conformal class of $d_\eta\lambda\vert_\F$,
      depend only on the cooriented foliation $\F$, not
    on the choice of $\alpha$.
   Indeed, let $\lambda$ be leafwise $\eta$-Liouville.
  Change $\alpha$ to $e^F\alpha$ for some smooth function $F$
   on $M$; then,
   $e^F\lambda$ is leafwise $(\eta+dF)$-Liouville (Remark \ref{cc_rmk}).
  
     \end{rmk}

\begin{dfn}\label{rich_dfn}
 We call $\F$
  \emph{holonomous} if every minimal set contains a
tangential loop whose linear holonomy is nontrivial. 
\end{dfn}

In
dimension $3$,
Eliashberg and Thurston proved that
 the condition of being holonomy-rich is necessary and sufficient
 for a cooriented foliation to admit
 a linear deformation (\cite{eliashberg_thurston_98}, Theorem 2.1.2.).  In the higher dimensions,
 being holonomy-rich
 remains \emph{necessary}  for the existence of a leafwise holonomy-Liouville form:
 indeed, in this direction,
  the arguments of \cite{eliashberg_thurston_98} hold as well in all dimensions
  (recently, Lauran Toussaint has given an alternative proof \cite{toussaint_20}).

\begin{TC}
On a compact manifold
$M$ of dimension $2n+1\ge 5$ with smooth boundary, 
let $\F$ be a cooriented codimension-one foliation which is transverse to $\partial M$,
 taut (Definition \ref{taut_dfn}), holonomous,
and such that every leaf meets $\partial M$.
Assume that $\F$ admits a nondegenerate leafwise $2$-form $\omega$.
Choose on $M$ a nonsingular $1$-form $\alpha$ defining $\F$, and a $1$-form  $\eta$
such that $d\alpha=\eta\wedge\alpha$.

 Then, $\F$ admits a leafwise $1$-form $\lambda\in\Omega^1(\F)$ such that
  
  \begin{enumerate}
  \item
  $d_\eta\lambda$ is nondegenerate,
 and homotopic to $\omega$ as a nondegenerate leafwise $2$-form;
\item
$\lambda$ restricts to a negative overtwisted contact form on
every leaf of $\partial\F=\F\vert_{\partial M}$.
  \end{enumerate}

\end{TC}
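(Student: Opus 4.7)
The plan is to reduce Theorem~$C$ to Theorem~$B$ by manufacturing an auxiliary convex boundary in the interior of $M$. Theorem~$B$ requires every leaf of $\F$ to meet both a convex and a concave boundary component of the ambient cobordism, whereas Theorem~$C$ only grants the concave piece $\partial M$. The missing convex piece will be installed as leafwise ``symplectic caps'' sitting inside each minimal set of $\F$; the holonomous hypothesis is exactly what produces these caps, playing here the role that non-exactness of $\eta$ played in the proof of Theorem~$A$.

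In more detail: the minimal sets of $\F$ form a finite collection, since $M$ is compact. For each minimal set $K$, the holonomous hypothesis gives a tangential loop $\gamma_K$ in some leaf of $K$ with nontrivial linear holonomy. Replacing $\gamma_K$ by a large iterate $\gamma_K^N$ and possibly reversing orientation, we arrange $\int_{\gamma_K}\eta<-c_n$, where $c_n$ is the constant of Lemma~\ref{toric_lem}. The next step is to build, on a compact foliated neighborhood $\tilde T_K$ of $K$ in $M$, a leafwise $\eta$-Liouville form $\lambda_K\in\Omega^1(\F|_{\tilde T_K})$ whose leafwise Liouville vector field $Z_K$ points inwards across $\partial\tilde T_K$, and whose restriction to each leaf of $\F|_{\partial\tilde T_K}$ is a positive overtwisted contact form. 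Morally, $\tilde T_K$ is a ``foliated solid torus'' around $\gamma_K$, with each leaf-slice playing the role of the single solid torus $T^{2n}$ of Lemma~\ref{toric_lem}.

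Granting these caps, set $W=M\setminus\bigsqcup_K\Int(\tilde T_K)$ with $\partial_-W=\bigsqcup_K\partial\tilde T_K$ (convex) and $\partial_+W=\partial M$ (concave). Every leaf of $\F|_W$ meets $\partial_+W$ by the hypothesis of Theorem~$C$, and also meets $\partial_-W$: its closure contains some minimal set $K$, so the leaf must cross $\partial\tilde T_K$ transversally. I would then apply Theorem~$B$ to $(W,\partial_\pm W,\F|_W)$, choosing the relative Novikov cohomology class so that the leafwise $\eta$-Liouville form $\lambda_W$ produced by Theorem~$B$ agrees with $\lambda_K$ in a neighborhood of each $\partial\tilde T_K$. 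Gluing $\lambda_W$ with the $\lambda_K$ delivers the desired global $\lambda\in\Omega^1(\F)$: its leafwise Lichnerowicz differential $d_\eta\lambda$ is nondegenerate and homotopic to $\omega$ as a nondegenerate leafwise $2$-form, while $\lambda|_{\partial\F}$ is a negative overtwisted contact form on every leaf of $\partial\F$, as required.

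The main obstacle is the construction of the cap $\tilde T_K$, which is the genuine foliated analogue of Lemma~\ref{toric_lem}. Unlike in Theorem~$A$, where $T_\pm$ could be any solid tori embedded in $M$ and where $\eta$ could be changed within its de~Rham class at will, here $\tilde T_K$ must be a compact neighborhood of $K$ that absorbs all nearby leaves, while $\eta$ is rigidly prescribed by $\alpha$ via $d\alpha=\eta\wedge\alpha$. The nontriviality of the linear holonomy along $\gamma_K$ plays a double role: it lets us iterate $\gamma_K$ to make $\int_{\gamma_K}\eta$ as negative as required, and it yields a local product-like description of $\F$ transverse to $\gamma_K$ in which the single-leaf picture of Lemma~\ref{toric_lem} can be spread across the transversal direction, in the same spirit as the foliated even-contact construction of Lemma~\ref{even_contact_lem}. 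Combining this leafwise toric lemma with a leafwise version of the even-contact-then-Liouville argument already used in the proof of Theorem~$B$ should deliver the cap $\tilde T_K$, finishing the proof.
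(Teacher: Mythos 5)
Your overall architecture is the paper's: use the holonomous hypothesis to install leafwise Liouville ``caps'' with inward-pointing Liouville field, excise them, apply Theorem $B$ to the complementary foliated cobordism, and glue. But two steps, as you set them up, do not go through. The first is the cap itself. You take $\tilde T_K$ to be a compact foliated neighborhood of the whole minimal set $K$. Such a neighborhood need not exist as a proper subset of $M$: if $\F$ is minimal (which occurs in abundance in high dimensions, cf.\ the remark following Theorem $C$), then $K=M$ and nothing is left for $W$; and even for smaller (e.g.\ exceptional) minimal sets there is no normal form to work with. The paper instead takes $T_\gamma$ to be a small tubular neighborhood $\S^1\times\D^{2n}$ of the holonomy loop $\gamma$ alone. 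The nontrivial linear holonomy yields there the explicit model $\alpha=dx_{2n}-hx_{2n}\theta$, in which $\F\vert_{T_\gamma}$ has a unique \emph{compact} leaf $L_\gamma\cong\S^1\times\D^{2n-1}$ (the slice $x_{2n}=0$) onto which all the other, noncompact, leaves spiral. Lemma \ref{toric_lem} is applied once, on $L_\gamma$, and the resulting form is spread over all of $T_\gamma$ by pulling back along the projection $\pi:T_\gamma\to L_\gamma$ parallel to the $x_{2n}$-axis; overtwistedness on the boundaries of the noncompact leaves then follows from accumulation of $\partial L$ on $\partial L_\gamma$ together with Gray stability. So your picture of ``each leaf-slice playing the role of the single solid torus of Lemma \ref{toric_lem}'' is not what happens: only the compact leaf is such a solid torus, and the others inherit their structure from it. The requirement that every leaf of $\F\vert_W$ meet the new convex boundary still holds, because the closure of every leaf contains some $\gamma$ and hence the leaf enters $T_\gamma$.

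The second gap is the gluing. You propose to choose the relative Novikov class in Theorem $B$ so that $\lambda_W$ ``agrees with $\lambda_K$ in a neighborhood of each $\partial\tilde T_K$''. Theorem $B$ offers no such control: it prescribes only the cohomology class and guarantees that the boundary restriction is \emph{some} positive overtwisted contact form; it does not let you prescribe the form, or even the contact structure, on $\partial_-W$. The paper glues by invoking the uniqueness up to isotopy of overtwisted leafwise contact structures to produce a tangential isotopy $\phi$ of $\partial T$ carrying one boundary contact structure to the other, then extends $\phi$ into $W$, matches the characteristic foliations of the two leafwise even contact structures across $\partial T$, and finally rescales by positive functions (via Lemma \ref{hypersurface_lem} and Remark \ref{invariance_rmk}) so that the two $\eta$-Liouville forms literally coincide near $\partial T$. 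Some argument of this kind is needed; without it your two pieces remain unglued.
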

In consequence (Lemma \ref{deformation_lem}),
 $\F$ admits a linear deformation into a contact structure
 for which the leaves of $\partial\F$
(cooriented by
a vector tangential to $\F$ and
 pointing outward $M$ followed by a vector tangential to $\partial M$
and positively transverse to $\F$)
are negative contact submanifolds.

\begin{rmk}
 The tautness hypothesis in Theorem $C$ can certainly be weakened, and maybe
suppressed.
 This hypothesis would be a serious restriction in dimension $3$,
since after the classical
 Novikov theorem, many $3$-manifolds do not admit any taut foliation.
But it is known that
 the case is different on a manifold $M$ of higher dimension:
  every cooriented hyperplane field on $M$ is homotopic to a smooth
 foliation, with nontrivial linear holonomy, and whose leaves are dense in $M$
\cite{meigniez_17}. (One may even prescribe $\F\vert\partial M$.)
 Such a foliation is in particular taut and holonomous.
Since, moreover, both properties of
 holonomousness and tautness are clearly open in the space
of codimension-one foliations on $M$, we conclude that for
every homotopy class of almost symplectic cooriented
hyperplane fields on $M$, our Theorem $C$ applies to a \emph{nonempty open subset} of
the space of foliations lying in this class.
\end{rmk}

 \begin{proof}[Proof of Theorem $C$]
 The foliation being holonomous means that
  $M$ contains in its interior a finite disjoint union $\Gamma$
of embedded oriented circles (one in each minimal set) such that
\begin{itemize}
\item Each component $\gamma$ of $\Gamma$ is tangential to $\F$,
 and $h_\gamma=\int_{\gamma}\eta\neq 0$;
\item The closure of every leaf of $\F$
contains at least one component of $\Gamma$.
\end{itemize}

After changing each loop $\gamma$ to a nearby small perturbation, in the same leaf, of
a positive or negative multiple of $\gamma$,
 one can moreover arrange that $h_\gamma<-c_n$
(the constant of Lemma \ref{toric_lem}). In particular, $h_\gamma<0$ (Remark \ref{positive_note}).
\medbreak
We now define $\lambda$ close to $\gamma$.

The
linear holonomy being nontrivial, $\gamma$ admits a small compact tubular
 neighborhood
$T_\gamma
\cong\S^1\times\D^{2n}$
in which $\F$ coincides with a standard, somehow linear model: namely, $\F$ is
 defined in this solid torus by the nonvanishing $1$-form
\begin{equation}\label{model_eqn}
\alpha= dx_{2n}-h x_{2n}\theta
\end{equation}
where $h=h_\gamma$, where $\theta$ denotes the positive unit volume form on $\S^1$,
 and where $x_1$, \dots, $x_{2n}$ are the standard coordinates on the compact unit ball
 $\D^{2n}\subset\R^{2n}$.
Note that
\begin{itemize}
\item $\eta=h\theta$ is a holonomy form associated with $\alpha$ in $T_\gamma$;
\item $\F\vert_{T_\gamma}$
 has a unique compact leaf $L_\gamma\cong\S^1\times\D^{2n-1}$,
defined by $x_{2n}=0$;
 \item
 $\F$ is transverse to $\partial T_\gamma\cong\S^1\times\S^{2n-1}$;
 \item The induced foliation
$\F\vert_{\partial T_\gamma}$ consists
 of two $(2n)$-dimensional Reeb components with common boundary $\partial
 L_\gamma$,
the one on $x_{2n}\ge 0$, the other on $x_{2n}\le 0$.
\end{itemize}

Consider in $T_\gamma$
 the projection $\pi:T_\gamma\to L_\gamma$ parallelly to the $x_{2n}$-axis,
 and the $1$-form
$$\rho=
(1-x_1^2-\dots-x_{2n-1}^2)\eta+x_1dx_1+\dots+x_{2n-1}dx_{2n-1}$$

The restriction $\rho\vert L_\gamma$ draws on
 $L_\gamma$ a  ($2n$)-dimensional Reeb component; while
 in restriction to $\partial T_\gamma$ one has
  $$\rho=\pi^*(\rho\vert L_\gamma)=-x_{2n}\alpha$$
 
Endow the solid torus $L_\gamma\cong\S^1\times\D^{2n-1}$, oriented by $\omega$,
 with the $1$-form $\lambda$ and with the vector field $Z$ given by
Lemma \ref{toric_lem} which \emph{enters} $L_\gamma$
 transversely through $\partial L_\gamma$.
Since $\theta(Z)=1$, after pushing $\lambda$ and $Z$ by a self-diffeomorphism
of $\S^1\times\D^{2n-1}$ preserving the projection to $\S^1$, we can arrange that
moreover, $\rho(Z)<0$
on $L_\gamma$.
Exdent $\lambda$ over $T_\gamma$ as the leafwise $1$-form $\pi^*(\lambda)\vert_{\F}$
(also denoted by $\lambda$); extend $Z$ over $T_\gamma$ as the vector field
that lifts $Z$ through $\pi$
tangentially to $\F$ (also denoted by $Z$). After 
Lemma \ref{toric_lem}, $\lambda$ is leafwise
 $\eta$-Liouville
in $T_\gamma$, and its  $\eta$-dual vector field is positively colinear to $Z$.
The space $U(n)/SO(2n)$ being simply connected, $d_\eta\lambda$ is homotopic
to $\omega\vert_{T_\gamma}$ as a nondegenerate leafwise $2$-form on
 $\F\vert_{T_\gamma}$.

The function $\rho(Z)$ being negative on $L_\gamma$, the vector field
 $Z$ enters transversely $T_\gamma$ through $\partial T_\gamma$.
After Lemma \ref{toric_lem}, the contact form $\lambda\vert_{\partial L_\gamma}$
is overtwisted. For every leaf $L$ of $\F\vert_{T_\gamma}$, since $\partial L$
 accumulates
on $\partial L_\gamma$ in $\partial T_\gamma$, 
it follows that $\lambda\vert_{\partial L}$
is overtwisted as well (any overtwisted ball in $\partial L_\gamma$ can be pushed
into $\partial L$ by an isocontact embedding close to the identity, after Gray's stability theorem). In other words, $\partial L$
is of overtwisted contact type and concave with respect to $d_\eta\lambda\vert_L$
(Definition \ref{twisted_dfn_3}, Lemma \ref{hypersurface_lem}).
\medbreak
After $\lambda$ has thus been constructed over the union $T$
of the $T_\gamma$'s, Theorem $B$ then allows one to complete
 the construction over $M$.
  Here are some precisions.
  
  In the cobordism $W=M\setminus Int(T)$ 
between $\partial_-W=\partial T$
and $\partial_+W=\partial M$, the foliation $\F\vert_W$ is transverse to $\partial W$,
taut,
and every leaf meets $\partial_-W$ and $\partial_+W$.
Extend $\alpha$
and $\eta$ from $T$ to $M$, such that $\alpha$ defines $\F$ over $M$, and that $\eta$
is a holonomy form associated to $\alpha$ over $M$.

After Theorem $B$, there is 
a $\eta$-Liouville leafwise $1$-form $\lambda'\in\Omega^1(\F\vert_W)$
 restricting, on every leaf $\ell$ of $\partial_+\F=\F\vert_{\partial M}$
  (resp. $\partial_-\F=\F\vert_{\partial T}$), to
 an 
overtwisted contact form which is negative (resp. positive) ---
here, $\ell$ is cooriented as a component of the boundary of a leaf of $\F\vert_W$ ---
and such that
 moreover, $d_\eta\lambda'$ is homotopic to $\omega$ as a nondegenerate leafwise $2$-form
on $\F\vert_W$.

There remains to paste the two pieces.
 After the h-principle for overtwisted leafwise contact structures on foliations
\cite{borman_eliashberg_murphy_15}, there is an isotopy $\phi$
 of $\partial T$ tangential
to $\F\vert_{\partial T}$ and such that  $\lambda$
and $\phi^*(\lambda')$ define the same leafwise contact structure on $\F\vert_{\partial T}$.

  On $\nb_T(\partial T)$ (resp. $\nb_W(\partial T)$), the leafwise $1$-form
  $\lambda$ (resp. $\lambda'$) defines for $\F$ a 
   leafwise even contact structure $\varepsilon$
 (resp. $\varepsilon'$) whose
  characteristic foliation $\ZZ$ (resp. $\ZZ'$)
  is a $1$-dimensional foliation
  transverse to $\partial T$.
 Extend $\phi$ to an isotopy of $W$ tangential to $\F\vert_W$, still denoted by $\phi$,
 such that $\ZZ$ and $\phi^*(\ZZ')$ match along $\partial T$,
 giving a global smooth $1$-dimensional foliation on $\nb_M(\partial T)$.
 Thus, $\varepsilon$ and $\phi^*(\varepsilon')$ give
  a global  even contact structure on $\nb_M(\partial T)$.
 Then, since $\phi^*(\eta\vert_\F)$ is cohomologous to $\eta\vert_\F$
  in $H^1(\F\vert_W)$
 (recall Definition \ref{ac_dfn}),
  multiplying $\phi^*(\lambda')$ by a convenient positive
function, one gets again a $\eta$-Liouville leafwise $1$-form $\lambda''$ on $W$, see
 Remark \ref{invariance_rmk}.
 Finally, in view of Lemma \ref{hypersurface_lem},
after multiplying again
 $\lambda''$ by a convenient positive function, the $\eta$-Liouville leafwise $1$-forms
  $\lambda$ and $\lambda''$ moreover
match along $\partial T$, and define a global $\eta$-Liouville leafwise $1$-form for $\F$
over $M$.
\end{proof}

\begin{rmk}\label{limitation_rmk}
Our method does not seem
to be able to produce contactizing linear deformations for taut foliations on closed manifolds. Precisely, Proposition
\ref{tightness_pro} will show that
starting from Theorem $B$,  the concave boundary cannot be eliminated in the same way as we have eliminated the convex boundary and got Theorem $C$.

 This problem raises the general
  question of whether the Eliashberg-Gromov tightness criterion for fillable contact structures admits the following
\emph{foliated and conformal} analogue for leafwise contact structures in all dimensions. 
 \end{rmk}
 
\begin{qst}\label{tightness_qst} Let $M$ be an oriented compact manifold of dimension $2n+1\ge 5$ with nonempty smooth boundary, endowed with a cooriented codimension-$1$ foliation $\F$, transverse to $\partial M$. Let $\alpha$, $\eta$, $\lambda$ be, respectively, a defining form for $\F$, an associated holonomy $1$-form, and a $\eta$-Liouville 
leafwise $1$-form which restricts to a positive contact form on every leaf of $\partial\F$.
 Does it follow that $\lambda$ is tight on every leaf of $\partial\F$~?
\end{qst}

\begin{pro}\label{tightness_pro} The answer to Question \ref{tightness_qst} is positive for the model foliation $\F$ defined by Equation
 (\ref{model_eqn}) on $T^{2n+1}=\S^1\times\D^{2n}$,
 for every $n\ge 2$ and $h \neq 0$.
\end{pro}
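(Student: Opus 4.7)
The plan is to exhibit, for each leaf $\ell$ of $\partial\F$, a genuine exact symplectic filling of the contact manifold $(\ell,\ker\lambda\vert_\ell)$, and to deduce tightness from the higher-dimensional Eliashberg--Gromov principle (a fillable contact structure contains no overtwisted disk in the Borman--Eliashberg--Murphy sense).

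The leaves of $\F$ split into the compact leaf $L_\gamma=\{x_{2n}=0\}\cong\S^1\times\D^{2n-1}$ and non-compact leaves. Lifting to the universal cover $\R\times\D^{2n}$, the non-compact leaves are the graphs $\{x_{2n}=Ce^{hs}\}$ with $C\neq 0$; since $e^h\neq 1$, two points on such a graph with the same projection to $\S^1\times\D^{2n}$ would require $Ce^{hs_1}=Ce^{hs_2}$ for some $s_1\neq s_2$ differing by a nonzero integer, which is impossible. Hence each non-compact leaf $L$ is simply connected.

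Conformal rescaling then converts the leafwise $\eta$-Liouville condition into a genuinely Liouville one. On a non-compact leaf $L$, simple connectedness gives $\eta\vert_L=dF$ with $F=h\tilde{s}$, where $\tilde s$ is the single-valued lift of $s$ on $L$. The form $\lambda'_L:=e^{-F}\lambda\vert_L$ satisfies $d\lambda'_L=e^{-F}d_\eta\lambda\vert_L$, hence is genuinely symplectic, and positivity of contact type on $\partial L$ is preserved by the positive rescaling. This realises $(L,\lambda'_L)$ as a genuine Liouville manifold with convex contact boundary $\partial L$. On the compact leaf $L_\gamma$, instead pass to the universal cover $\tilde L_\gamma=\R\times\D^{2n-1}$, where $\tilde\eta=h\,ds$ is exact and $\tilde\lambda'_\gamma:=e^{-hs}\tilde\lambda\vert_{\tilde L_\gamma}$ is genuinely Liouville. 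The deck transformation $\psi\colon s\mapsto s+1$ multiplies $\tilde\lambda'_\gamma$ by $e^{-h}$, while the time-$h$ flow $\phi_h$ of the Liouville vector field multiplies it by $e^h$; thus $\phi_h\circ\psi$ preserves $\tilde\lambda'_\gamma$ exactly, providing a strict Liouville $\Z$-symmetry on the cover.

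Tightness now follows: the filling $(L,\lambda'_L)$ yields it for each non-compact leaf of $\partial\F$ directly, while for the compact leaf $\partial L_\gamma$, any overtwisted disk would lift to $\partial\tilde L_\gamma$, contradicting the existence of the filling $(\tilde L_\gamma,\tilde\lambda'_\gamma)$. The main obstacle will be that both fillings are non-compact, whereas Eliashberg--Gromov is classically stated for compact fillings. To overcome this, I would combine the exact Liouville symmetry $\phi_h\circ\psi$ with a standard cylindrical completion of the convex boundary, producing a Liouville manifold that is compact modulo a cylindrical end, to which the tightness criterion applies; for the non-compact leaves, the factor $e^{-F}$ remains bounded near $\partial L$ (blowing up only along the accumulation on $L_\gamma$), so a truncation past a level of $\tilde s$ yields a compact Liouville sub-domain sharing the same convex contact boundary.
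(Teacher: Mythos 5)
Your overall strategy --- conformally rescale $\lambda$ leaf by leaf to a genuine Liouville form and invoke a fillability-implies-tightness criterion --- is the same as the paper's, and your opening reductions (simple connectedness of the non-compact leaves, exactness of $\eta$ there, preservation of convexity under positive rescaling) are sound. The gap is in the step where you dispose of the non-compactness of the fillings, which is where all the content of the proof lies. First, the claim that ``$e^{-F}$ remains bounded near $\partial L$'' is false: on the leaf through $(0,a)$ with $a_{2n}\neq 0$ one has $e^{-F}=e^{-h\tilde s}=a_{2n}/x_{2n}$, and the end of $\partial L$ is precisely the locus where $x_{2n}\to 0$, i.e.\ where $\partial L$ spirals onto $\partial L_\gamma$; so the rescaling factor blows up \emph{along} $\partial L$, not merely away from it. Second, even granting a truncation $\{\tilde s\ge c\}\cap L$, the resulting compact domain does not ``share the same convex contact boundary'': its boundary consists of only a compact piece of $\partial L$ together with a new hypersurface $\{\tilde s=c\}$ that has no reason to be of contact type or pseudoconvex, so the relevant holomorphic curves (the Bishop family attached to a plastikstufe, for $n\ge 3$) could escape through it; and in any case such a domain fills only a compact piece of the non-compact contact manifold $\partial L$, which does not exclude an overtwisted object sitting arbitrarily far out toward the accumulation end. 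The same difficulty recurs, unaddressed, for $\tilde L_\gamma$: the strict symmetry $\phi_h\circ\psi$ is a nice observation, but the Liouville flow need not be complete nor preserve $\tilde L_\gamma$, so it does not by itself yield a compact quotient or a genuine cylindrical end.

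For comparison, the paper reduces the compact leaf to the non-compact ones by accumulation (overtwistedness of $\xi\vert_{\partial L_\gamma}$ would propagate to nearby leaves of $\partial\F$ by Gray stability), keeps the \emph{complete} non-compact filling $(L,\vert x_{2n}\vert^{-1}d_\eta\lambda)$, and verifies that the rescaled metric $g'=g/\vert x_{2n}\vert$ has bounded geometry using the self-similarity of the model: on a fundamental cylinder the metrics $e^{-ht}j_s^*(g)$ have uniformly controlled injectivity radius and curvature, and $g'$ restricted to any leaf is a constant multiple of one of them. Bounded geometry is exactly the hypothesis under which the non-compact versions of the Eliashberg--Gromov criterion ($n=2$) and of Niederkr\"uger's criterion ($n\ge 3$) apply. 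To salvage your truncation idea you would need a confinement argument for holomorphic curves at the truncation hypersurface; verifying bounded geometry of the complete filling is the more direct route.
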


\begin{proof}[Proof of Proposition \ref{tightness_pro}] Let us begin, to fix ideas,
with the case $n=2$.
In this case, Proposition \ref{tightness_pro}
 is a simple application of the Eliashberg-Gromov tightness criterion (\cite{eliashberg_91}, \cite{eliashberg_thurston_98} and \cite{gromov_85}) in the noncompact framework:
  every contact $3$-manifold $(\partial M, \xi)$ which bounds a symplectic 
 $4$-manifold $(M, \om)$ with bounded symplectic geometry at infinity is tight.

\medskip
Consider, on $\partial \F$, the leafwise contact structure 
$$\xi = \ker(\lambda|_{\partial T^5}).$$
Let $L_0$ and $\ell_0$ denote the compact leaf of $\F$
 and its boundary, respectively. Since every other leaf $\ell$ of
 $\partial\F$ accumulates on $\ell_0$, if $\xi \vert_{\ell_0}$ were overtwisted,
then $\xi\vert_\ell$ would also be overtwisted.
 Hence, it suffices to prove that $\xi|_\ell$ is tight for every 
 \emph{non-compact} leaf $\ell$ of $\partial \F$. 

\medskip
Every noncompact leaf $L$ of $\F$ being without holonomy,
 the holonomy $1$-form $\eta=h\theta$ is exact on $L$,
 hence (Remark \ref{cc_rmk})
  $\lambda\vert_L$ is conformal to a genuinely Liouville form on $L$.
   In fact, the function $\ln(\vert x_4\vert)$ is a primitive of $\eta$ in
   restriction to every noncompact leaf.
    The fact that this function is bounded from above on every such leaf, which is specific to foliations of very simple dynamics like $\F$, seems to be crucial in the proof.

\medskip
We now show that any non-compact leaf $L$ has bounded symplectic geometry. The $2$-form $d_\eta\lambda$ on $T^5$, being leafwise nondegenerate, admits a leafwise almost complex structure $J$ (an automorphism of the vector bundle
$T\F$ such that $J^2=-\id$) which preserves $\xi$ at every point of $\partial T^5$, and such that 
$$g = d_\eta\lambda(\cdot, J\cdot)$$
defines a Riemannian metric on every leaf $L$ of $\F$. By compacity of $T^5$, the metric $g|_L$ has \emph{bounded geometry}, meaning that $g\vert_L$ is a complete Riemannian metric on $L$, whose injectivity radius is bounded away from zero,
 and whose sectional curvature is bounded.

\medskip
The $1$-form $\lambda' = \vert x_4\vert\mun\lambda|_L$ is genuinely Liouville on $L$;
 and the exact symplectic form $d\lambda'=\vert x_4\vert\mun d_\eta\lambda$ \emph{dominates} $\xi\vert_\ell$, in the sense that $d\lambda'$ is nondegenerate on $\xi|_\ell$ at every point of $\ell$. Set 
$$g' = \frac{g}{\vert x_4\vert} = d\lambda'(\cdot, J\cdot).$$
The metric $g'$ is compatible with $d \lambda'$. It remains
 to verify that $g'$ is of bounded geometry on $L$.

 One has $g' \geq g$, so that $g'$ is complete as well on $L$.
It is convenient to introduce the solid cylinder
$$C = \left[-1, 1\right] \times \D^4$$
and, for every $s \in \S^1 = \R/\Z$, the immersion
$$j_s : C \hookrightarrow T^5 : (t,x)\mapsto(s+t,x)$$
The foliation $\CC=j_s^*(\F)$ of $C$ does obviously not depend on $s$.
Consider on $\CC$ the smooth family of leafwise Riemannian metrics, pa\-ra\-me\-tri\-zed by $s \in \S^1$
$$g_s = e^{-ht}j_s^*(g)$$
Let $\iota_0>0$ be the minimum of their injectivity radii, and $\sigma_0<+\infty$ be the maximum of the absolute values of their sectional curvatures.
    
Consider the leaf $L_a$ of $\CC$ through the point $(0,a)$, with $a = (a_1, ..., a_4)$ in $\D^4$ and $a_4\neq 0$. Since $x_4|_{L_a} = a_4 e^{ht}$, on $L_a$, one has
$j_s^*(g') = {\vert a_4 \vert}\mun g_s$; and
the following bounds follow at once on $L_a$~:
$$\iota(j_s^*(g'))=\vert a_4\vert\mun\iota(g_s)\ge\iota_0$$
$$\vert\sigma(j_s^*(g'))\vert=\vert a_4\vert\vert\sigma(g_s)\vert\le\sigma_0.$$

The proof of Proposition \ref{tightness_pro} in the higher dimensions is much alike,
 but instead of the original Eliashberg-Gromov tightness criterion,
one applies Niederkr\"uger's tightness criterion \cite{niederkruger_06} (see also \cite{borman_eliashberg_murphy_15} paragraph 10),
  in the noncompact framework, under the hypothesis of bounded geometry
  (this noncompact version of the criterion does not appear in the litterature, but
  the generalization is
  straightforward).

\end{proof}

\ \\

M\'elanie Bertelson: {\tt Melanie.Bertelson@ulb.be} --- {D\'epartement de Math\'e\-ma\-tique,
Universit\'e libre de Bruxelles --- Boulevard du Triomphe --- 1050 Bruxelles --- Belgique.} \\

Ga\"el Meigniez: {\tt Gael.Meigniez@univ-amu.fr} --- {Aix-Marseille Universit\'e, I2M (CNRS UMR 7373, Centrale Marseille) ---
13453 Marseille --- France.}

\end{document}